\newcommand{\hcoker}{/\!\!/}
\newtheorem{thmx}{{\bf Theorem}}[section]
\newtheorem*{corollary*}{Corollary}
\newtheorem{theorem}{Theorem}[section]
\newtheorem{lemma}[theorem]{Lemma}
\newtheorem{proposition}[theorem]{Proposition}
\newtheorem{corollary}[theorem]{Corollary}
\newtheorem{definition-theorem}[theorem]{Definition-Theorem}
\theoremstyle{definition}
\newtheorem{definition}[theorem]{Definition}
\newtheorem{example}[theorem]{Example}
\theoremstyle{remark}
\newtheorem{remark}[theorem]{Remark}
\def\l@subsection{\@tocline{2}{0pt}{2.5pc}{5pc}{}} 
\numberwithin{equation}{section}
\title{Topological aspects of the dynamical moduli space of rational maps}
\author{Maxime Bergeron, Khashayar Filom and Sam Nariman}
\address{Maxime Bergeron, Department of Mathematics,
  University of Chicago;
 5734 S. University Avenue, Chicago IL, 60637,
USA}
\email{mbergeron@math.uchicago.edu}
\address{Khashayar Filom, Department of Mathematics,
  University of Michigan;
 530 Church St, Ann Arbor, MI 48109,
USA}
\email{filom@umich.edu}
\address{Sam Nariman, Department of Mathematics,
   Purdue University;
150 N. University Street;
West Lafayette, IN 47907-2067}
\email{snariman@purdue.edu}
\begin{document}

\begin{abstract}
    We investigate the topology of the space of M\"obius conjugacy classes of degree $d$ rational maps on the Riemann sphere. We show that it is rationally acyclic and we compute its fundamental group. As a byproduct, we also obtain the ranks of some higher homotopy groups of the parameter space of degree $d$ rational maps allowing us to extend the previously known range. Moreover, we show that this parameter space is not nilpotent. 
\end{abstract}

\maketitle
\tableofcontents

\section{Introduction}\label{intro}
The dynamical moduli space of rational maps is defined as the space of M\"obius conjugacy classes of rational maps from $\mathbb{CP}^1$ to $\mathbb{CP}^1$.
 To keep things concrete, we can identify a given rational map of degree $d$ with a pair of homogeneous polynomials of degree $d$ in two variables without common zeroes. The space of such rational maps ${\rm{Rat}}_d$ is then endowed with an action of the group of M\"obius transformations ${\rm{PSL}}_2(\mathbb{C})$ by conjugation $$
\alpha\cdot f:=\alpha\circ f\circ\alpha^{-1}.
$$
 As a complex orbifold, the moduli space $\mathcal{M}_d$ of degree $d$ rational maps from $\mathbb{CP}^1$ to $\mathbb{CP}^1$ is then constructed as the quotient  
 \begin{equation}\label{definition}
\mathcal{M}_d={\rm{Rat}}_d\big/{\rm{PSL}}_2(\Bbb{C}).
\end{equation}

Aside from their ubiquity in dynamics, spaces such as ${\rm{Rat}}_d$ are intriguing because they lie at the heart of a growing body of work on  interactions between topology, algebraic geometry and number theory. 
%This can be seen, for instance, in the paper of Ellenberg, Venkatesh and Westerland \cite{MR3488737}. 
In this context, ${\rm{Rat}}_d$ is also one of the main examples in the work of Farb and Wolfson \cite{MR3548124,MR3652084} on spaces of polynomials generalizing the classical study of discriminants and resultants. 
Moreover, in the topology literature, the space ${\rm{Rat}}_d$ is studied in the seminal paper \cite{MR533892} of Segal. 
Nevertheless, in most cases, the topology of the moduli spaces \eqref{definition} which is a quotient of ${\rm{Rat}}_d$ has remained a mystery.

Accordingly, our goal in this paper is to study the topology of $\mathcal{M}_d$. In the case of quadratic rational maps, the features of $\mathcal{M}_2$ have previously been extensively studied by Milnor \cite{MR1246482}. In particular,  he identified $\mathcal{M}_2$ with  $\mathbb{C}^2$ through dynamical methods. This identification was then generalized by Silverman \cite{MR1635900} who used  Geometric Invariant Theory to construct $\mathcal{M}_d$ as an integral affine scheme over $\mathbb{Z}$ and identify $\mathcal{M}_2$ with the affine plane $\mathbb{A}^2$.  Our first two main theorems can be viewed as topological extensions of these results to higher degrees:
 \begin{thmx}\label{main 1}
If the integer $d$ is greater than $1$, then  $\mathcal{M}_d$ is rationally acyclic.
\end{thmx}
\begin{thmx}
		If the integer $d$ is greater than $1$, then \label{main 2}
		\begin{equation*}
\pi_1(\mathcal{M}_d) = \begin{cases}
0 &\text{if $d$ is even, and}\\
\Bbb{Z}/2\Bbb{Z} &\text{if $d$ is odd.}
\end{cases}
\end{equation*}
\end{thmx}
In particular, using the rational Hurewicz theorem, we deduce that $\mathcal{M}_d$ is rationally weakly contractible if $d$ is even.

It should be remarked at this point that the moduli space $\mathcal{M}_d$ was shown to be rational as a variety by Levy \cite{MR2741188} and that the point counts of $\mathcal{M}_d$ and $\mathbb{A}^{2d-2}$ are  known to agree over all finite fields by work of Gunther and West \cite{MRGuntherWestUnpublished}. From this perspective, our results enhance the similarities between the topological invariants of $\mathcal{M}_d$ over the rational numbers and those of affine space.
 In particular, one might be tempted to conjecture that $\mathcal{M}_d$ should be contractible over the complex numbers whenever $d$ is even. 
 Nevertheless, using cohomology with finite coefficients, we shall see that this is not the case (cf. Theorem \ref{main finite} and Corollary \ref{not contractible}):
  \begin{thmx}\label{main 2.5}
If the integer $d$ is greater than or equal to $7$, then  $\mathcal{M}_d$ is not contractible.
\end{thmx}

To prove these theorems, we rely on dynamical results of  Milnor \cite{MR1246482}, Silverman \cite{MR1635900} and Miasnikov, Stout and Williams \cite{MR3709645}  as well as more topological tools such as the \emph{generalized} Leray-Hirsch theorem developed by Peters and Steenbrink \cite{MR2078574} and a characterization of the space of Toeplitz matrices by Milgram \cite{MR1445557}. We are also inspired by the rich history in homotopy theory of the parameter space ${\rm{Rat}}^*_d$ of based rational maps of degree $d$. Notably, recall that its stable homotopy type was computed by F. Cohen, R. Cohen, Mann and Milgram \cite{MR1097023} following up on work of Segal \cite{MR533892} showing  that, for $1\leq i<d$, the $i^{\rm{th}}$ homotopy group of ${\rm{Rat}}^*_d$ coincides with the $i^{\rm{th}}$ homotopy group of the space of based continuous self-maps of $S^2$ of the same degree $d$. Within this range, it turns out that all higher homotopy groups of ${\rm{Rat}}^*_d$  are torsion. On the other hand, Guest, Kozlowski, Murayama and Yamaguchi \cite{MR1365252} showed that if $1\leq i<d$, then the group $\pi_i({\rm{Rat}}_d)$ is infinite only when $i=3$ in which case $\pi_3({\rm{Rat}}_d)$ is of rank one. 
Taking the quotient of ${\rm{Rat}}_d$ by the action of ${\rm{PSL}}_2(\Bbb{C})$ kills this non-torsion part and an application of the rational Hurewicz theorem to a  space related to 
$\mathcal{M}_d={\rm{Rat}}_d\big/{\rm{PSL}}_2(\Bbb{C})$ allows us to obtain the rank of higher homotopy groups of ${\rm{Rat}}_d$ and ${\rm{Rat}}^*_d$ beyond the previously known range:

\begin{thmx}\label{main 3}
Let $\phi$ denote Euler's totient function. Given $d>3$ and $d\leq i\leq 2d-2$, one has 
$$
{\rm{rank}}\,\pi_i({\rm{Rat}}^*_d)={\rm{rank}}\,\pi_i({\rm{Rat}}_d)=
\begin{cases}
\phi\left(\frac{d}{j}\right)&\text{ if } i=2d-2j \text{ where }1\leq j<d \text{ divides } d\text{, and}\\
0 & \text{otherwise.}\\
\end{cases}
$$
 
\end{thmx}

To put this theorem into more context, recall that Segal showed that ${\rm{Rat}}_d$ and ${\rm{Rat}}^*_d$ are stably nilpotent spaces \cite[Corollary 6.3]{MR533892}. On the other hand, using our theorem, we show that the fundamental groups of  ${\rm{Rat}}_d$ and ${\rm{Rat}}^*_d$ do not act nilpotently on the corresponding homotopy groups in dimension $2d-2$, so Segal's result does not hold unstably.

\begin{thmx}\label{not nilpotent}
The spaces ${\rm{Rat}}^*_d$ and ${\rm{Rat}}_d$ are not nilpotent for any $d>3$.
\end{thmx}

%somehow this seems like it belongs more in the outline if at all in the introduction...

%Crucial to the proof of this theorem  is the investigation of the \textit{``resultant=1'' hypersurface} (see \eqref{resultant=1}) carried out by Farb and Wolfson \cite{MR3652084}. This smooth variety emerges naturally  in constructing the  universal cover of ${\rm{Rat}}_d$, and in order to establish Theorem \ref{main 3} we shall need its Betti numbers which, adapting the point of view exploited in \cite{MR3548124,MR3652084}, could be obtained from the knowledge of its \'etale cohomology groups due to the classical Artin comparison theorem \cite{MR0232775}. 

We conclude this introduction with a brief outline of the paper. We begin in  \S\ref{background} with background material on the spaces ${\rm{Rat}}_d$ and $\mathcal{M}_d$. This is followed by a proof of Theorem \ref{main 1} in  \S\ref{rational homology}. The difficulty in understanding the homology groups of $\mathcal{M}_d={\rm{Rat}}_d\big/{\rm{PSL}}_2(\Bbb{C})$ from those of ${\rm{PSL}}_2(\Bbb{C})$ and ${\rm{Rat}}_d$ arises due to the fact that the conjugation action of ${\rm{PSL}}_2(\Bbb{C})$ on ${\rm{Rat}}_d$ is not free. We circumvent this with an application of the generalized Leray-Hirsch theorem. Similar arguments in \S\ref{application marked} and \S\ref{application variant} allow us to understand the rational homology of variants of the moduli space $\mathcal{M}_d$ such as the orbit space $\mathcal{M}^{\rm{post}}_d$ for the post-composition action  of ${\rm{PSL}}_2(\Bbb{C})$ on ${\rm{Rat}}_d$  rather than the conjugation action; see  Theorem \ref{main variant}. This leads us to the proof of Theorem \ref{main 2.5} in \S\ref{finite homology}. The key point here is that, unlike the rational homology groups, the homology groups of $\mathcal{M}_d$ with finite coefficients carry a rich structure. Indeed, there is an algebra isomorphism between $H^*(\mathcal{M}_d;\Bbb{F}_p)$ and $H^*(\mathcal{M}^{\rm{post}}_d;\Bbb{F}_p)$ for all but finitely many characteristics $p$, see  Theorem \ref{main finite}, and the latter cohomology ring is highly non-trivial.
The proof of Theorem \ref{main 2} is found in \S\ref{homotopy groups}. Once again, the difficulty lies in the fact that the action of ${\rm{PSL}}_2(\Bbb{C})$ on ${\rm{Rat}}_d$ is not free. Nevertheless, it turns out to be proper; see Lemma \ref{proper}. Finally, Theorems \ref{main 3} and \ref{not nilpotent} are proved in \S\ref{application resultant}.

\textbf{Acknowledgements.}

The first named author would like to thank Ronno Das, Benson Farb, Claudio G\'omez-Gonz\'ales and, more generally, the UChicago geometry \& topology research group for inspiring conversations. He was partially supported by the Jump Trading Mathlab Research Fund, an NSERC postdoctoral fellowship and NSF award  DMS - 1704692. The second named author is grateful to Laura DeMarco, Benson Farb, and Jesse Wolfson for helpful comments and conversations. The third named author thanks Wolfgang L{\"u}ck for the reference \cite{MR1027600}. He  was partially supported by NSF Grant DMS - 1810644. The authors would like to thank the organizers of the NSF/PIMS summer school -- The Roots of Topology, the IPAM workshop on Braids, Resolvent Degree and Hilbert’s 13th Problem and the PIMS workshop on Arithmetic Topology where some of this work was conducted.
Finally, the authors are grateful to the anonymous referee for his/her valuable comments and suggestions.

\section{Background on the spaces \texorpdfstring{${\rm{Rat}}_d$}{Ratd} and \texorpdfstring{$\mathcal{M}_d$}{Md}}\label{background}

In this section, we recall results that we shall need concerning the parameter and moduli spaces of rational maps of a fixed degree $d$. We will assume $d$ is greater than $1$ throughout the paper. 

\subsection{The parameter space \texorpdfstring{${\rm{Rat}}_d$}{Ratd}}\label{background1}
We denote the space of degree $d$ rational maps (holomorphic maps $\Bbb{CP}^1\rightarrow\Bbb{CP}^1$ of degree $d$) by ${\rm{Rat}}_d$. This is the complement of the \textit{resultant hypersurface} in the projective space $\Bbb{P}\left(\Bbb{C}^{d+1}\times\Bbb{C}^{d+1}\right)=\Bbb{CP}^{2d+1}$ of pairs of polynomials of degree at most $d$. As such, ${\rm{Rat}}_d$ is an affine variety of dimension $2d+1$.  

A celebrated result of Segal states that the inclusion of ${\rm{Rat}}_d$ into the space ${\rm{Map}}_d(S^2)$ of continuous degree $d$ maps $S^2\rightarrow S^2$ induces isomorphisms on homotopy groups $\pi_i$ for $0\leq i<d$ and an epimorphism on $\pi_d$ \cite[Proposition 1.1$'$]{MR533892}. 
%(The same result holds in the context of based maps too \cite[Proposition 1.1$'$]{MR533892}.) 
Within this range, the homotopy groups of ${\rm{Rat}}_d$ have been computed:

\begin{theorem}[{\cite[Theorem 1]{MR1365252}}]\label{Rat homotopy}
Assuming $d\geq 3$, one has
$$
\pi_i({\rm{Rat}}_d)\cong
\begin{cases}
\Bbb{Z}/2\Bbb{Z} & d\geq 3, i=2\\
\pi_{i+2}(S^2)\oplus\pi_{i}(S^3)& 3\leq i<d.
\end{cases}
$$
\end{theorem}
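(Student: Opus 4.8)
The plan is to combine Segal's theorem (quoted above) with the evaluation fibration of the free mapping space and a Whitehead-product computation. Since the inclusion ${\rm{Rat}}_d\hookrightarrow{\rm{Map}}_d(S^2)$ induces isomorphisms on $\pi_i$ for $0\leq i<d$, it suffices to compute $\pi_i({\rm{Map}}_d(S^2))$ in this range. For this I would use the evaluation fibration at the basepoint,
$$\Omega^2 S^2\longrightarrow {\rm{Map}}_d(S^2)\xrightarrow{\ {\rm ev}\ } S^2,$$
whose fiber is the degree $d$ component of $\Omega^2 S^2$. As all components of $\Omega^2 S^2$ are homotopy equivalent, $\pi_i(\Omega^2 S^2)\cong\pi_{i+2}(S^2)$, and this will supply the first summand in the statement.

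Next I would write down the long exact sequence of this fibration and identify its connecting homomorphism $\partial\colon\pi_k(S^2)\to\pi_{k-1}(\Omega^2 S^2)\cong\pi_{k+1}(S^2)$ with the Whitehead product $\alpha\mapsto[\iota_d,\alpha]$, where $\iota_d=d\,\iota_2\in\pi_2(S^2)$ is the class of a degree $d$ map. The whole computation then hinges on two facts about Whitehead products in $\pi_*(S^2)$. First, $[\iota_2,\iota_2]=\pm2\eta$, where $\eta$ generates $\pi_3(S^2)\cong\mathbb{Z}$; hence $\partial\colon\pi_2(S^2)=\mathbb{Z}\to\pi_3(S^2)=\mathbb{Z}$ is multiplication by $\pm2d$, and in particular is injective. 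Second, $[\iota_2,\alpha]=0$ for every $\alpha\in\pi_k(S^2)$ with $k\geq3$. I would prove the latter by writing $\alpha=\eta\circ\tilde\alpha$ with $\tilde\alpha\in\pi_k(S^3)$ (possible since the Hopf map induces $\pi_k(S^3)\cong\pi_k(S^2)$ for $k\geq3$) and applying the composition formula $[\iota_2,\eta\circ\tilde\alpha]=[\iota_2,\eta]\circ\Sigma\tilde\alpha$, which reduces everything to $[\iota_2,\eta]=[\iota_2,\iota_2]\circ\Sigma\eta=\pm2(\eta\circ\Sigma\eta)=0$ in $\pi_4(S^2)\cong\mathbb{Z}/2$. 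Consequently $\partial=0$ on $\pi_k(S^2)$ for all $k\geq3$.

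With these inputs the long exact sequence collapses as required. For $i=2$ (and $d\geq3$, so that Segal applies), the injectivity of $\partial\colon\pi_2(S^2)\to\pi_3(S^2)$ kills the base contribution while the fiber contributes $\pi_2(\Omega^2 S^2)\cong\pi_4(S^2)\cong\mathbb{Z}/2$, giving $\pi_2({\rm{Rat}}_d)\cong\mathbb{Z}/2$. For $3\leq i<d$, the vanishing of $\partial$ in degrees $i$ and $i+1$ yields short exact sequences
$$0\to\pi_{i+2}(S^2)\to\pi_i({\rm{Map}}_d(S^2))\to\pi_i(S^2)\to 0.$$
To split these and to identify the second summand with $\pi_i(S^3)$, I would use the map ${\rm SO}(3)\to{\rm{Map}}_d(S^2)$ sending a rotation $g$ to $g\circ f_d$ for a fixed degree $d$ map $f_d$ fixing the basepoint; composing with ${\rm ev}$ recovers the orbit fibration ${\rm SO}(3)\to S^2$, which induces isomorphisms on $\pi_i$ for $i\geq3$ (its fiber ${\rm SO}(2)=S^1$ has no higher homotopy). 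Since $\pi_i({\rm SO}(3))\cong\pi_i(S^3)$ for $i\geq2$, this provides a section of $\pi_i({\rm{Map}}_d(S^2))\to\pi_i(S^2)$ and simultaneously explains the appearance of $\pi_i(S^3)$ in the statement.

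The hard part will be the Whitehead-product analysis of the connecting homomorphism: both the identification of $\partial$ with $[\iota_d,-]$, which requires care with the geometry of the evaluation fibration, and above all the vanishing $[\iota_2,\alpha]=0$ for $k\geq3$. Once this is in hand the remainder is bookkeeping with the long exact sequence. A secondary point is verifying that the rotation map genuinely splits the short exact sequences, that is, that ${\rm SO}(3)\to S^2$ induces isomorphisms on $\pi_i$ for $i\geq3$; this follows from the circle fiber of the frame bundle having vanishing homotopy above degree one.
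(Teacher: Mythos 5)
The paper never proves this statement—it is quoted verbatim from Guest–Kozlowski–Murayama–Yamaguchi \cite{MR1365252}—so there is no internal argument to compare against; your proof is correct and is essentially the standard one underlying that reference: Segal's theorem reduces the computation to $\pi_i({\rm{Map}}_d(S^2))$ in the range $i<d$, G.\,W.~Whitehead's identification of the connecting map of the evaluation fibration with the Whitehead product $[\,d\iota_2,-\,]$ shows it is injective on $\pi_2$ (since $[\iota_2,\iota_2]=\pm2\eta$) and zero on $\pi_k$ for $k\geq 3$ (since $[\iota_2,\eta]=[\iota_2,\iota_2]\circ\Sigma\eta=\pm2(\eta\circ\Sigma\eta)=0$ in $\pi_4(S^2)\cong\mathbb{Z}/2$, and every higher class factors through $\eta$), and the ${\rm{SO}}(3)$-orbit map splits the resulting short exact sequences, yielding the $\pi_i(S^3)$ summand. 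Your two flagged points are exactly the right ones to be careful about, and both steps are sound as written: the connecting-map identification is Whitehead's classical theorem on evaluation fibrations, and the distributivity used in $(\pm2\eta)\circ\Sigma\eta=\pm2(\eta\circ\Sigma\eta)$ is legitimate because $\Sigma\eta$ is a suspension.
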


It should be noted that, rather than working with ${\rm{Rat}}_d$, many authors 
%including \cite{MR533892, MR1365252, MR3548124},
% do we really need this here?
study the parameter space ${\rm{Rat}}_d^*$ of holomorphic maps preserving a base point. For instance, there is a description of the stable homotopy type of ${\rm{Rat}}_d^*$ given in  \cite[Theorem 1.1]{MR1097023} which is used to understand its cohomology groups in terms of those of an Artin braid group. %(although the ring structures of the cohomology turn out to be different \cite{MRTotaro}).
For our purposes, we will often identify ${\rm{Rat}}_d^*$ with the space of those rational maps of degree $d$ that send $\infty$ to $1$. This is the approach taken in \cite{MR3548124} where the authors compute the Betti numbers for the space of holomorphic maps $\Bbb{CP}^1\rightarrow\Bbb{CP}^{m-1}$ of degree $d$ taking $\infty$ to $[1:\dots:1]$; cf. \cite[Example 6.12]{MR3720801}. Nevertheless, preservation of base points is a dynamically restrictive hypothesis so we turn our attention to the parameter space ${\rm{Rat}}_d$. In the proposition below, we  use the results of \cite{MR3548124} to compute the Betti numbers of ${\rm{Rat}}_d$ itself:
\begin{proposition}\label{Rat homology}
For any $d\geq 2$ one has
$$b_i({\rm{Rat}}_d)=\begin{cases}
1\quad i=0\text{ or } 3\text{, and}\\
0\quad \text{otherwise.}
\end{cases}$$
\end{proposition}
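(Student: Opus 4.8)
The plan is to relate ${\rm{Rat}}_d$ to the based parameter space ${\rm{Rat}}_d^*$ --- whose rational Betti numbers are supplied by \cite{MR3548124} --- by means of an evaluation fibration and a short Serre spectral sequence argument. First I would consider the evaluation map ${\rm{ev}}_\infty\colon {\rm{Rat}}_d\to\mathbb{CP}^1$ sending $f$ to $f(\infty)$. Since post-composition by a M\"obius transformation preserves the degree, the group ${\rm{PSL}}_2(\mathbb{C})$ acts on ${\rm{Rat}}_d$ by $\beta\cdot f=\beta\circ f$, and ${\rm{ev}}_\infty$ is equivariant for the (transitive) M\"obius action on the base. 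Being an equivariant submersion onto a homogeneous space, ${\rm{ev}}_\infty$ is a locally trivial fiber bundle whose fiber over $1\in\mathbb{CP}^1$ is exactly the space of degree $d$ maps sending $\infty$ to $1$, namely ${\rm{Rat}}_d^*$. This yields a fibration
$$
{\rm{Rat}}_d^*\hookrightarrow{\rm{Rat}}_d\xrightarrow{\ {\rm{ev}}_\infty\ }\mathbb{CP}^1\cong S^2 .
$$

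Next I would feed in the rational homology of the fiber. By the $m=2$ case of \cite{MR3548124} (equivalently, via Segal's theorem \cite{MR533892} together with the rational homotopy of $\Omega^2 S^2$, which within a path component is concentrated in degree $1$), the space ${\rm{Rat}}_d^*$ is rationally a circle: $b_0({\rm{Rat}}_d^*)=b_1({\rm{Rat}}_d^*)=1$ and all higher rational Betti numbers vanish. Since the base $S^2$ is simply connected, the rational Serre spectral sequence of the fibration has trivial coefficients, with $E_2^{p,q}=H^p(S^2;\mathbb{Q})\otimes H^q({\rm{Rat}}_d^*;\mathbb{Q})$ nonzero only at the four spots $(p,q)\in\{(0,0),(0,1),(2,0),(2,1)\}$, each a copy of $\mathbb{Q}$. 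The only possibly nonzero differential is the transgression $d_2\colon E_2^{0,1}\to E_2^{2,0}$.

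The crux is to show that this $d_2$ is an isomorphism rather than zero; I expect this to be the main obstacle. Note that the Euler characteristic equals $0$ in either case, so it cannot distinguish the two and an independent input is required. For this I would compute $H_1({\rm{Rat}}_d;\mathbb{Q})$ directly, exploiting that ${\rm{Rat}}_d$ is the complement of the resultant hypersurface in $\mathbb{P}(\mathbb{C}^{d+1}\times\mathbb{C}^{d+1})=\mathbb{CP}^{2d+1}$. That hypersurface is irreducible of degree $2d$, so the standard computation of the first homology of the complement of an irreducible hypersurface gives $H_1({\rm{Rat}}_d;\mathbb{Z})\cong\mathbb{Z}/2d\mathbb{Z}$, which is torsion; hence $H_1({\rm{Rat}}_d;\mathbb{Q})=0$. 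Because $E_\infty^{0,1}$ is the only entry contributing to $H^1({\rm{Rat}}_d;\mathbb{Q})$, we must have $E_\infty^{0,1}=0$, which forces $d_2$ to be injective and therefore an isomorphism of one-dimensional spaces; in particular $E_\infty^{2,0}=0$ as well. Reading off the surviving entries, $E_\infty$ is $\mathbb{Q}$ at $(0,0)$ and at $(2,1)$ and zero elsewhere, giving $b_0({\rm{Rat}}_d)=b_3({\rm{Rat}}_d)=1$ and $b_i({\rm{Rat}}_d)=0$ otherwise, as claimed.
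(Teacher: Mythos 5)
Your proof is correct and follows the same skeleton as the paper's: the evaluation fibration ${\rm{Rat}}_d^*\hookrightarrow{\rm{Rat}}_d\to\Bbb{CP}^1$, the rational Betti numbers of ${\rm{Rat}}_d^*$ from \cite{MR3548124}, and a four-entry Serre spectral sequence whose only issue is whether $d_2\colon E_2^{0,1}\to E_2^{2,0}$ vanishes. Where you diverge is in the input used to settle that differential: the paper quotes the finiteness of $\pi_1({\rm{Rat}}_d)$ (its Proposition on the fundamental group, $\pi_1({\rm{Rat}}_d)\cong\Bbb{Z}/2d\Bbb{Z}$, obtained from the universal cover construction adapted from Milnor and Segal), which forces $H^1({\rm{Rat}}_d;\Bbb{Q})=0$ and hence $d_2$ injective; you instead compute $H_1({\rm{Rat}}_d;\Bbb{Z})\cong\Bbb{Z}/2d\Bbb{Z}$ directly from the classical fact that the complement of an irreducible degree $k$ hypersurface in $\Bbb{CP}^n$ has first homology $\Bbb{Z}/k\Bbb{Z}$, applied to the degree $2d$ irreducible resultant hypersurface. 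Your route is more self-contained (it needs only irreducibility of the resultant, not the covering-space analysis of ${\rm{Rat}}_d$), while the paper's choice is natural in context since it needs the full statement $\pi_1({\rm{Rat}}_d)\cong\Bbb{Z}/2d\Bbb{Z}$ and the explicit universal cover later anyway. One small caveat: your parenthetical claim that the vanishing of the higher Betti numbers of ${\rm{Rat}}_d^*$ follows ``equivalently'' from Segal's theorem is not quite right, since Segal's comparison with $\Omega^2_d S^2$ only holds through dimension $d$; the unconditional statement in all degrees does require \cite{MR3548124} (or the stable results of Cohen--Cohen--Mann--Milgram), which is the citation you correctly lean on.
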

\begin{proof}
In positive dimensions, the only non-zero Betti number of ${\rm{Rat}}^*_d$ is known to be $b_1({\rm{Rat}}^*_d)=1$ \cite[Theorem 1.2(2)]{MR3548124}. Picking an arbitrary point $z_0$ of $\Bbb{CP}^1$, the evaluation map $f\mapsto f(z_0)$ results in a fiber bundle: 
\begin{equation}\label{fibration1}
{\rm{Rat}}_d^*\hookrightarrow{\rm{Rat}}_d\rightarrow\Bbb{CP}^1.    
\end{equation}
On the $E_2$ page of the corresponding Serre spectral sequence, the terms 
$$
E_2^{i,j}=H^i\left(\Bbb{CP}^1;H^j({\rm{Rat}}_d^*;\Bbb{Q})\right)
$$
are non-zero precisely when $(i,j)\in\{(0,0),(0,1),(2,0),(2,1)\}$ in which case they are all one-dimensional. 
On the $E_3$ page one has 
$$
E_3^{0,0}\cong E_2^{0,0}\cong\Bbb{Q},\quad E_3^{2,1}\cong E_2^{2,1}\cong\Bbb{Q};
$$
and
$$
E_3^{0,1}\cong{\rm{ker}}\left(d_2:E_2^{0,1}\rightarrow E_2^{2,0}\right),\quad E_3^{2,0}\cong{\rm{coker}}\left(d_2:E_2^{0,1}\rightarrow E_2^{2,0}\right);
$$
with all other terms zero. We conclude that the spectral sequence degenerates at the $E_3$ page due to the fact that $E_3^{i,j}=0$ whenever $i\geq 3$. Consequently, the only potentially non-trivial rational cohomology groups of ${\rm{Rat}}_d$ are: 
\begin{equation*}
\begin{split}
&H^0({\rm{Rat}}_d;\Bbb{Q})\cong E_3^{0,0}\cong\Bbb{Q},\\
&H^1({\rm{Rat}}_d;\Bbb{Q})\cong E_3^{0,1}\cong{\rm{ker}}\left(d_2:E_2^{0,1}\rightarrow E_2^{2,0}\right),\\
&H^2({\rm{Rat}}_d;\Bbb{Q})\cong E_3^{2,0}\cong{\rm{coker}}\left(d_2:E_2^{0,1}\rightarrow E_2^{2,0}\right),\\
&H^3({\rm{Rat}}_d;\Bbb{Q})\cong E_3^{2,1}\cong\Bbb{Q}.
\end{split}    
\end{equation*}
However, $H^1({\rm{Rat}}_d;\Bbb{Q})$ must be trivial since $\pi_1({\rm{Rat}}_d)$ is known to be a finite group; see Proposition \ref{Rat fundamental} below. This implies that the differential $d_2:E_2^{0,1}\rightarrow E_2^{2,0}$ is injective and thus, given the fact that its domain and codomain are one-dimensional, an isomorphism.  The groups 
$H^1({\rm{Rat}}_d;\Bbb{Q})$ and $H^2({\rm{Rat}}_d;\Bbb{Q})$ above are therefore both trivial. 
\end{proof}
\begin{remark}
These Betti numbers can also be obtained from the work of Claudio Gómez-Gonzáles on the  rational cohomology of the space of degree $d$ holomorphic maps from 
$\mathbb{CP}^n$ to $\mathbb{CP}^m$ \cite{MR4140094}; cf. Remark \ref{claudio}.
\end{remark}

We conclude this subsection with a discussion of the fundamental group of ${\rm{Rat}}_d$, adapted from \cite[Appendix B]{MR1246482}. Taking ${\rm{Rat}}_d^*$ to be the subspace of degree $d$ rational maps under which $\infty\mapsto 1$, any element $f$ of this space may be uniquely written as 
$f(z)=\frac{p(z)}{q(z)}$ where $p$ and $q$ are coprime monic polynomials of degree $d$. It is known that the resultant map
\begin{equation}\label{resultant map}
\begin{cases}
{\rm{Rat}}_d^*\rightarrow\Bbb{C}-\{0\}\\
f=\frac{p}{q}\mapsto{\rm{Res}}(p,q)    
\end{cases}
\end{equation}
is a fibration that induces an isomorphism of fundamental groups \cite[Propositions 6.2 and 6.4]{MR533892}. The fiber above $1$ is the \textit{resultant=1 hypersurface} 
\begin{equation}\label{resultant=1}
\mathcal{R}_d=\left\{(p,q)\,\big|\,p,q\in\Bbb{C}[z]\text{ monic, coprime and of degree }d \text{ with }{\rm{Res}}(p,q)=1\right\}    
\end{equation}
that will come up again in \S\ref{application resultant}. Now, writing the long exact sequence of homotopy groups for the fibration 
\begin{equation}\label{fibration2}
\mathcal{R}_d\hookrightarrow{\rm{Rat}}^*_d\rightarrow\Bbb{C}-\{0\}    
\end{equation}
implies that $\mathcal{R}_d$ is simply connected. This helps us to construct a universal covering space for ${\rm{Rat}}_d$. To this end, consider  
\begin{equation}
    \widetilde{\rm{Rat}}_d:=\left\{(p,q)\,\big|\,p,q\in\Bbb{C}[z]\text{ coprime with }\max\{\deg p, \deg q\}=d\text{ and }{\rm{Res}}(p,q)=1\right\}
\end{equation}
and observe that $\mathcal{R}_d$ also fits into a fiber bundle
\begin{equation}\label{fibration3}
\mathcal{R}_d\hookrightarrow\widetilde{\rm{Rat}}_d
\rightarrow\Bbb{C}^2-\{(0,0)\}    
\end{equation}
\normalsize
where the projection onto $\Bbb{C}^2-\{(0,0)\}$ maps $(p,q)$ to the ordered pair consisting of the coefficients of $z^d$ in $p$ and $q$. As the fiber and the base are both simply connected, we deduce that $\widetilde{\rm{Rat}}_d$ is simply connected as well. There is a surjective map
\begin{equation}\label{universal cover}
\begin{cases}
\widetilde{\rm{Rat}}_d\rightarrow {\rm{Rat}}_d\\
(p,q)\mapsto f=\frac{p}{q}
\end{cases}    
\end{equation}
whose fibers are scalar multiples $(\lambda p,\lambda q)$ of $(p,q)$ satisfying ${\rm{Res}}(\lambda p,\lambda q)=1$. The left hand side is simply $\lambda^{2d}{\rm{Res}}(p,q)$ and $\lambda$  must thus be a $2d^{\rm{th}}$ root of unity. We conclude that \eqref{universal cover} is the quotient map for the scaling action 
\begin{equation}\label{action1}
\lambda\cdot(p,q):=(\lambda p,\lambda q)
\end{equation}
of the group of $2d^{\rm{th}}$ roots of unity on $\widetilde{\rm{Rat}}_d$. The action is free and $\widetilde{\rm{Rat}}_d$ is simply connected. Consequently, \eqref{universal cover} is a universal covering map. To summarize:

\begin{proposition}\label{Rat fundamental}
The fundamental group of ${\rm{Rat}}_d^*$ can be identified with $\Bbb{Z}$ via the map \eqref{resultant map}. The fundamental group of 
${\rm{Rat}}_d$ is cyclic of order $2d$ and the homomorphism $\pi_1({\rm{Rat}}_d^*)\rightarrow\pi_1({\rm{Rat}}_d)$ induced by inclusion may be identified with 
$\Bbb{Z}\twoheadrightarrow\Bbb{Z}/2d\Bbb{Z}$. Moreover, the universal cover of ${\rm{Rat}}_d$ is given by \eqref{universal cover}. 
\end{proposition}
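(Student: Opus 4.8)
The plan is to prove the four assertions in sequence, bootstrapping from the based space ${\rm{Rat}}_d^*$ up to ${\rm{Rat}}_d$ by producing an explicit universal cover. The statement about $\pi_1({\rm{Rat}}_d^*)$ is immediate: the resultant map \eqref{resultant map} is a fibration onto $\Bbb{C}-\{0\}\simeq S^1$ inducing an isomorphism on fundamental groups, so $\pi_1({\rm{Rat}}_d^*)\cong\Bbb{Z}$, generated by any loop along which the resultant winds once around the origin.

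Next I would verify that $\widetilde{\rm{Rat}}_d$ is simply connected by chasing the two auxiliary fibrations. In the long exact sequence of \eqref{fibration2}, the vanishing of $\pi_2(\Bbb{C}-\{0\})$ together with the fact that $\pi_1({\rm{Rat}}_d^*)\to\pi_1(\Bbb{C}-\{0\})$ is an isomorphism forces $\pi_1(\mathcal{R}_d)=0$. Feeding this into the long exact sequence of \eqref{fibration3}, whose base $\Bbb{C}^2-\{(0,0)\}\simeq S^3$ is also simply connected, yields $\pi_1(\widetilde{\rm{Rat}}_d)=0$.

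With simple connectivity in hand, I would identify \eqref{universal cover} as the universal cover. Since the resultant is bihomogeneous of bidegree $(d,d)$ in the coefficients of $p$ and $q$, one has ${\rm{Res}}(\lambda p,\lambda q)=\lambda^{2d}{\rm{Res}}(p,q)$, so the fibers of $(p,q)\mapsto p/q$ are precisely the orbits of the free scaling action \eqref{action1} of the group $\mu_{2d}$ of $2d^{\rm{th}}$ roots of unity. Thus \eqref{universal cover} is a regular cover with deck group $\mu_{2d}$, and because the total space is simply connected it is the universal cover; in particular $\pi_1({\rm{Rat}}_d)\cong\mu_{2d}\cong\Bbb{Z}/2d\Bbb{Z}$.

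The step I expect to require the most care is identifying the induced homomorphism $\pi_1({\rm{Rat}}_d^*)\to\pi_1({\rm{Rat}}_d)$ with the surjection $\Bbb{Z}\twoheadrightarrow\Bbb{Z}/2d\Bbb{Z}$. Here I would take a generating loop of monic pairs $(p_t,q_t)$ whose resultant $r_t={\rm{Res}}(p_t,q_t)$ winds once around $0$, and lift it to $\widetilde{\rm{Rat}}_d$ by rescaling to $(\lambda_t p_t,\lambda_t q_t)$ with $\lambda_t^{2d}r_t=1$ and $\lambda_t$ chosen continuously. As $r_t$ completes its loop, the continuous branch $\lambda_t=r_t^{-1/(2d)}$ picks up a primitive $2d^{\rm{th}}$ root of unity, so the lift ends at a point differing from its start by a generator of $\mu_{2d}$; hence the generator of $\Bbb{Z}$ maps to a generator of $\Bbb{Z}/2d\Bbb{Z}$. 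The first three steps are essentially formal once the fibrations are available, whereas this last one hinges on correctly matching the monodromy of the rescaling factor to the winding of the resultant.
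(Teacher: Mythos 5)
Your proof is correct and follows essentially the same route as the paper: establishing simple connectivity of $\widetilde{\rm{Rat}}_d$ by chasing the long exact sequences of the fibrations \eqref{fibration2} and \eqref{fibration3}, then identifying \eqref{universal cover} as the quotient by the free scaling action of the $2d^{\rm{th}}$ roots of unity. Your final monodromy computation with the continuous branch $\lambda_t=r_t^{-1/(2d)}$ correctly supplies the identification of the induced map with $\Bbb{Z}\twoheadrightarrow\Bbb{Z}/2d\Bbb{Z}$, a step the paper leaves implicit by deferring to Milnor's Appendix B.
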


\begin{remark}
The computation of the fundamental groups of ${\rm{Rat}}_d^*$ and ${\rm{Rat}}_d$ above is adapted from \cite[Appendix B]{MR1246482} but also appears in earlier papers. 
A proof that $\pi_1({\rm{Rat}}_d^*)\cong\Bbb{Z}$ is presented in Segal's work (where it is attributed to J. D. S. Jones). The isomorphism 
$\pi_1({\rm{Rat}}_d)\cong\Bbb{Z}/2d\Bbb{Z}$ can also be obtained by combining the earlier results 
\cite[Proposition 1.1$'$]{MR533892} and \cite[Proposition 3.3]{MR744651}.
\end{remark}

\begin{remark}
Rational maps with some prescribed information on their critical points or ramification are studied in the literature too. Paper \cite{MR1093002} is concerned with rational maps with a given set of critical points; and paper \cite{MR4031111} computes the cohomology of spaces of polynomials with a bound on their \textit{ramification length}.
In both papers, the maps should be considered modulo post-composition by M\"obius/affine transformations. The post-composition action will also come up later in \S\ref{application variant}. 
\end{remark}

\subsection{The moduli space \texorpdfstring{$\mathcal{M}_d$}{Md}}\label{background2}
The group of automorphisms of the Riemann sphere
$$
\left\{\alpha:z\mapsto\frac{rz+s}{tz+u}\,\bigg|\,
\begin{bmatrix}
r&s\\
t&u
\end{bmatrix}
\in{\rm{SL}}_2(\Bbb{C})\right\}\cong {\rm{PSL}}_2(\Bbb{C})
$$
 acts on ${\rm{Rat}}_d$ from the left 
\begin{equation}\label{auxiliary1}
\alpha\cdot f(z)=\alpha\left(f\left(\alpha^{-1}(z)\right)\right)=\frac{rf\left(\frac{uz-s}{-tz+r}\right)+s}{tf\left(\frac{uz-s}{-tz+r}\right)+u}.    
\end{equation}
\begin{remark}\label{action on cover}
Since this will be used later, we bring the reader's attention to the fact that this action lifts to an action of ${\rm{SL}}_2(\Bbb{C})$ on the universal cover $\widetilde{\rm{Rat}}_d$ that appeared in \eqref{fibration3}. Homogenizing an element $(p(z),q(z))$ of $\widetilde{\rm{Rat}}_d$ as 
$\left(P(X,Y),Q(X,Y)\right)$ where 
$$
P(X,Y)=Y^d\, p\left(\frac{X}{Y}\right),\quad\text{and } Q(X,Y)=Y^d\, q\left(\frac{X}{Y}\right),
$$
the action ${\rm{SL}}_2(\Bbb{C})\curvearrowright\widetilde{\rm{Rat}}_d$ is given by

\begin{equation}\label{action2}
\begin{bmatrix}
r&s\\
t&u
\end{bmatrix}\cdot
\left(P(X,Y),Q(X,Y)\right)=(P'(X,Y),Q'(X,Y))\end{equation}
where
\begin{equation*}
    P'(X,Y):=rP(uX-sY,-tX+rY)+sQ(uX-sY,-tX+rY)
\end{equation*}
and
\begin{equation*}
    Q'(X,Y):=tP(uX-sY,-tX+rY)+uQ(uX-sY,-tX+rY).
\end{equation*}
%$$\left(rP(uX-sY,-tX+rY)+sQ(uX-sY,-tX+rY),tP(uX-sY,-tX+rY)+uQ(uX-sY,-tX+rY)\right)
%$$
This action  is well defined since transforming a pair $(P,Q)$ by a square matrix via \eqref{action2}
multiplies the resultant by a power of the determinant \cite[Exercise 2.7]{MR2316407}. Notice that \eqref{action2} also provides a homogenization of the  conjugation action of ${\rm{SL}}_2(\Bbb{C})$ (or ${\rm{PSL}}_2(\Bbb{C})$) on the space $\Bbb{CP}^{2d+1}\supset{\rm{Rat}}_d$ of projective equivalence classes of pairs of polynomials of degree at most $d$.
\end{remark}

The group ${\rm{Aut}}(f)$ of M\"obius transformations commuting with a rational map $f$ of degree $d\geq 2$ is finite as any such transformation must preserve a subset of cardinality at least three formed by critical and fixed points of $f$. Therefore, the quotient $\mathcal{M}_d$ of ${\rm{Rat}}_d$ under this action is a complex orbifold of dimension 
$$
(2d+1)-3=2d-2.
$$
In particular, the set of orbifold points is the locus determined by rational maps admitting non-trivial M\"obius automorphisms. 
\begin{definition}\label{symmetry locus definition}
The \textit{symmetry locus} in $\mathcal{M}_d$ is defined as:
\begin{equation}\label{S}
\mathcal{S}:=\left\{\langle f\rangle\in\mathcal{M}_d\mid f\in{\rm{Rat}}_d, {\rm{Aut}}(f)\neq\{1\}\right\}.
\end{equation}
\end{definition}
As mentioned in the introduction, $\mathcal{M}_d$ also has the structure of an affine variety. The singular locus of this variety is closely related to the orbifold locus.
\begin{proposition}[{\cite[Corollary 2.10 and Theorem 5.8]{MR3709645}}]\label{symmetry locus}
The symmetry locus $\mathcal{S}$ is a closed subvariety of dimension (codimension) $d-1$. For $d\geq 3$ it coincides with the singular locus of $\mathcal{M}_d$. 
\end{proposition}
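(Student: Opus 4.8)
The plan is to analyze $\mathcal{S}$ through the fixed loci of the conjugation action and then to read off its local structure from the orbifold charts of $\mathcal{M}_d$. For each $\alpha\in{\rm{PSL}}_2(\Bbb{C})$ I would set $V_\alpha=\{f\in{\rm{Rat}}_d\mid \alpha\circ f=f\circ\alpha\}$, so that $\mathcal{S}$ is the image in $\mathcal{M}_d$ of $\bigcup_{\alpha\neq 1}V_\alpha$. Because ${\rm{Aut}}(f)$ is finite of order bounded in terms of $d$, every nontrivial automorphism group contains an element of prime order, so it suffices to let $\alpha$ range over prime-order elements, each conjugate to a rotation $z\mapsto\zeta z$ with $\zeta$ a primitive $n$-th root of unity. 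Closedness of $\mathcal{S}$ I would deduce from the properness of the conjugation action (cf. Lemma~\ref{proper}): an automorphism permutes the canonically attached finite set of critical points of $f$, so Möbius transformations commuting with $f$ cannot escape to infinity as $f$ ranges over a compact set, whence the image of the closed, invariant locus $\{f\mid{\rm{Aut}}(f)\neq\{1\}\}$ is closed.

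For the dimension count I would fix $\alpha(z)=\zeta z$ and decompose the coefficient space by the characters of $\langle\alpha\rangle$. Writing $f=p/q$ with $p,q$ coprime, the relation $\alpha\circ f=f\circ\alpha$ forces, by coprimality, $p(\zeta z)=\zeta^{a}p(z)$ and $q(\zeta z)=\zeta^{b}q(z)$ with $a\equiv b+1\pmod n$; equivalently $p$ and $q$ are supported on fixed residue classes modulo $n$. This presents $V_\alpha$ as an open subset of a linear subspace of $\Bbb{CP}^{2d+1}$ whose dimension is computed explicitly and is largest when $n$ is smallest, namely $n=2$, where $p$ is odd and $q$ is even (or vice versa) and $\dim V_\alpha=d$. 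Passing to $\mathcal{M}_d$ amounts to quotienting by the normalizer of $\langle\alpha\rangle$, whose identity component is the one-dimensional torus $z\mapsto\lambda z$, cutting the dimension down by one. Maximizing over all prime orders $n\ge 2$, and noting that the dihedral and polyhedral possibilities contribute only smaller strata, yields $\dim\mathcal{S}=d-1$; since $\dim\mathcal{M}_d=2d-2$ the codimension is then also $d-1$.

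For the identification with the singular locus when $d\ge 3$ I would use the local orbifold chart: near a class $\langle f\rangle$ the space $\mathcal{M}_d$ is modeled on $T/{\rm{Aut}}(f)$, where the slice $T\cong\Bbb{C}^{2d-2}$ is obtained from $T_f{\rm{Rat}}_d$ by killing the tangent directions to the ${\rm{PSL}}_2(\Bbb{C})$-orbit and on which ${\rm{Aut}}(f)$ acts linearly. If ${\rm{Aut}}(f)$ is trivial this chart is a manifold point, so ${\rm{Sing}}(\mathcal{M}_d)\subseteq\mathcal{S}$ holds unconditionally. For the reverse inclusion I would invoke the Chevalley--Shephard--Todd theorem: $T/G$ is smooth precisely when $G\subseteq{\rm{GL}}(T)$ is generated by pseudo-reflections. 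The decisive input is the dimension of the fixed subspace of a nontrivial $g\in{\rm{Aut}}(f)$ on $T$: this fixed space is, modulo the orbit directions, the tangent space to the commuting locus $V_g$, so its codimension in $T$ equals the moving dimension of $g$, which by the stratum computation above is at least $d-1$. Hence for $d\ge 3$ the moving dimension is $\ge 2$, no element acts as a pseudo-reflection, and $\langle f\rangle$ is a genuine singularity, giving $\mathcal{S}\subseteq{\rm{Sing}}(\mathcal{M}_d)$. The threshold $d\ge 3$ is sharp because for $d=2$ the order-two symmetry acts as a reflection, consistent with $\mathcal{M}_2\cong\Bbb{C}^2$ being smooth.

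The main obstacle I anticipate is this reflection analysis in the last step: one must control the eigenvalues of every nontrivial $g\in{\rm{Aut}}(f)$ on the deformation space $T$ uniformly over $f$ in a given stratum and rule out the case of a single eigenvalue $\neq 1$ with all others equal to $1$. Pinning down the precise identification of ${\rm{Fix}}(g|_T)$ with the tangent space to $V_g$, so that the codimension bound $\ge d-1\ge 2$ genuinely applies, is the technical heart, and extra care is needed for the small exceptional symmetry groups and for checking that the generic point of each stratum has automorphism group exactly the expected cyclic group. By contrast, the closedness and the bare dimension estimate should be comparatively routine once the eigenspace description of $V_\alpha$ is in hand.
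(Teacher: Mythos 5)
A preliminary caveat about the comparison: the paper does not prove this proposition at all. It is imported, with citation, from Miasnikov--Stout--Williams \cite{MR3709645} (their Corollary 2.10 and Theorem 5.8), and the paper only uses the statement. So the only meaningful benchmark is the cited source, and there your plan is essentially the same proof: stratify $\mathcal{S}$ by conjugacy classes of prime-order rotations $z\mapsto\zeta z$, compute $\dim V_\alpha$ from the character decomposition of the coefficients of $p$ and $q$ (the order-two stratum dominating, giving $d$ in ${\rm{Rat}}_d$ and $d-1$ after dividing by the one-dimensional normalizer), and detect singularities via the Luna \'etale slice together with Chevalley--Shephard--Todd, ruling out pseudo-reflections because every nontrivial $g\in{\rm{Aut}}(f)$ fixes a subspace of the slice of codimension at least $d-1\geq 2$. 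As an outline this is correct, and you have correctly located the technical heart.

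Two concrete repairs are needed, both fixable. First, in the closedness argument you let automorphisms act on ``the finite set of critical points of $f$''; that set can have only two elements (e.g.\ $f(z)=z^d$ with critical set $\{0,\infty\}$), and the M\"obius transformations preserving a two-point set form a positive-dimensional group, so no compactness follows. You need at least three marked points; the paper's own \S\ref{background2} uses critical points together with fixed points precisely for this reason, and the same fix works here. Relatedly, you must rule out that nontrivial automorphisms $\alpha_n\in{\rm{Aut}}(f_n)$ converge to the identity as $f_n\to f$: this requires the uniform bound on the order of elements of ${\rm{Aut}}(f)$, since elements of ${\rm{PSL}}_2(\Bbb{C})$ of bounded finite order are bounded away from the identity while elliptic elements of unbounded order are not. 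Second, for the Chevalley--Shephard--Todd step you only need the \emph{upper} bound $\dim{\rm{Fix}}(g|_T)\leq d-1$, not the equality with ``the moving dimension'' that you assert; the clean route is ${\rm{Fix}}(g|_T)\subseteq V_g\cap T$ together with the fact that $T$ maps quasi-finitely to $\mathcal{M}_d$, so the fixed locus cannot have dimension exceeding that of the image stratum of $V_g$, which your own count bounds by $d-1$. Note also that Luna's theorem requires the ${\rm{PSL}}_2(\Bbb{C})$-orbits in ${\rm{Rat}}_d$ to be closed; this is exactly what properness (Lemma \ref{proper}) supplies, so the ingredient is available but should be invoked explicitly.
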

\noindent 
Non-free orbits of the conjugation action ${\rm{PSL}}_2(\Bbb{C})\curvearrowright{\rm{Rat}}_d$ will arise again in  
\S\ref{finite homology} and \S\ref{homotopy groups}.

\section{The homology of \texorpdfstring{$\mathcal{M}_d$}{Md}}\label{homology}
Building on results about ${\rm{Rat}}_d$ outlined above, we turn our attention to the homology of $\mathcal{M}_d$.

\subsection{The rational homology of \texorpdfstring{$\mathcal{M}_d$}{Md}}\label{rational homology}
Recall from \S\ref{background2} that the action of ${\rm{PSL}}_2(\Bbb{C})$ on ${\rm{Rat}}_d$ is not free and, as such, that  
\begin{equation}\label{fibration5}
{\rm{PSL}}_2(\Bbb{C})\rightarrow{\rm{Rat}}_d\xrightarrow{{f\mapsto\langle f\rangle}}\mathcal{M}_d    
\end{equation}
is not a fiber bundle. Nevertheless, it is an ``orbifold fiber bundle'' in the sense of \cite{MR2078574} because it is the geometric quotient for an action of a connected group with finite stabilizers. 
Consequently, a generalization of the classical Leray-Hirsch theorem (see \cite[Theorem 2]{MR2078574}) yields an isomorphism   
\begin{equation}\label{isomorphism}
H^*({\rm{Rat}}_d;\Bbb{Q})\cong H^*(\mathcal{M}_d;\Bbb{Q})\otimes H^*({\rm{PSL}}_2(\Bbb{C});\Bbb{Q})     
\end{equation}
in this orbifold context provided that the inclusion of an orbit (and hence every orbit)
\begin{equation}\label{fiber inclusion}
\begin{cases}
{\rm{PSL}}_2(\Bbb{C})\rightarrow{\rm{Rat}}_d\\
\alpha\mapsto \alpha\circ f\circ\alpha^{-1}  
\end{cases}    
\end{equation}
induces a surjection $H^*({\rm{Rat}}_d;\Bbb{Q})\twoheadrightarrow H^*({\rm{PSL}}_2(\Bbb{C});\Bbb{Q})$. Our strategy for proving  \eqref{isomorphism} is to establish this surjectivity. Notice that the rational cohomology groups involved are very sparse: $H^*({\rm{Rat}}_d;\Bbb{Q})$ is non-trivial only in dimensions zero and three (Proposition \ref{Rat homology}) and the same holds for the Lie group ${\rm{PSL}}_2(\Bbb{C}).$ % since it deformation retracts to its maximal compact subgroup ${\rm{PSU}}_2\cong{\rm{SO}}_3(\Bbb{R})$ which is homeomorphic to $\Bbb{RP}^3$. 
Consequently, in order to apply the generalized Leray-Hirsch theorem, it suffices to establish the following:

\begin{lemma}\label{main lemma}
The conjugation orbit inclusion \eqref{fiber inclusion} induces an isomorphism 
\begin{equation}\label{main isomorphism}
H_3({\rm{PSL}}_2(\Bbb{C});\Bbb{Q})\stackrel{\sim}{\rightarrow}H_3({\rm{Rat}}_d;\Bbb{Q})    
\end{equation}
of one-dimensional $\Bbb{Q}$-vector spaces.
\end{lemma}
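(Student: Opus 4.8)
The plan is to transport the statement to rational homotopy groups, where both sides are concentrated in degree three, and then to detect the relevant class by composing with an evaluation map. Since ${\rm{PSL}}_2(\Bbb{C})$ deformation retracts onto its maximal compact ${\rm{SO}}(3)$, its lowest nonzero rational homotopy is $\pi_3\otimes\Bbb{Q}=\Bbb{Q}$ and the rational Hurewicz map $\pi_3({\rm{PSL}}_2(\Bbb{C}))\otimes\Bbb{Q}\to H_3({\rm{PSL}}_2(\Bbb{C});\Bbb{Q})$ is an isomorphism. For ${\rm{Rat}}_d$ the same holds: working on the simply connected cover $\widetilde{\rm{Rat}}_d$ one checks that $\pi_2\otimes\Bbb{Q}=0$ (via Theorem \ref{Rat homotopy} and the fibration \eqref{fibration1}), so that $\pi_3({\rm{Rat}}_d)\otimes\Bbb{Q}\xrightarrow{\sim}H_3({\rm{Rat}}_d;\Bbb{Q})$. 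By naturality of the Hurewicz map it therefore suffices to prove that the conjugation orbit map induces a \emph{nonzero} homomorphism $\pi_3({\rm{PSL}}_2(\Bbb{C}))\otimes\Bbb{Q}\to\pi_3({\rm{Rat}}_d)\otimes\Bbb{Q}$ between one-dimensional spaces.

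To detect this, I would fix $z_0\in\Bbb{CP}^1$ and use the evaluation ${\rm{ev}}\colon{\rm{Rat}}_d\to\Bbb{CP}^1$, $g\mapsto g(z_0)$, of the fibration \eqref{fibration1}. Because ${\rm{Rat}}_d^*$ is rationally a circle, its long exact sequence shows that ${\rm{ev}}_*$ is a rational isomorphism on $\pi_3$. The essential subtlety to flag here is that ${\rm{ev}}$ is necessarily \emph{zero} on $H_3$ since $H_3(\Bbb{CP}^1;\Bbb{Q})=0$, so the detection genuinely takes place at the level of homotopy groups and not homology. Now the conjugation map factors as ${\rm{PSL}}_2(\Bbb{C})\xrightarrow{\Delta}{\rm{PSL}}_2(\Bbb{C})\times{\rm{PSL}}_2(\Bbb{C})\xrightarrow{\mu}{\rm{Rat}}_d$, where $\mu(\alpha,\beta)=\alpha\circ f\circ\beta^{-1}$. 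On $\pi_3$ the diagonal sends a generator $x$ to $(x,x)$, whence $(c_f)_*(x)=\lambda_*(x)+\rho_*(x)$, with $\lambda(\alpha)=\alpha\circ f$ the post-composition orbit and $\rho(\beta)=f\circ\beta^{-1}$ the pre-composition orbit with inverse.

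I would then compute the two composites with ${\rm{ev}}$ separately. The map ${\rm{ev}}\circ\lambda$ is the orbit map $\alpha\mapsto\alpha(f(z_0))$, that is, the bundle projection ${\rm{SO}}(3)\to S^2$; its circle fiber has trivial $\pi_2$ and $\pi_3$, so it is an isomorphism on $\pi_3$ and sends $x$ to a generator $\eta\in\pi_3(\Bbb{CP}^1)$ up to sign. The map ${\rm{ev}}\circ\rho$ is the composite ${\rm{PSL}}_2(\Bbb{C})\xrightarrow{\iota}{\rm{PSL}}_2(\Bbb{C})\to S^2\xrightarrow{f}S^2$ of inversion, the orbit map, and $f$; here inversion acts by $-1$ on $\pi_3$ of a Lie group, the orbit map again by $\pm1$, and the degree-$d$ map $f$ acts by $d^2$ on $\pi_3(S^2)=\Bbb{Z}$ owing to the quadratic behaviour of the Hopf invariant under post-composition. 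Adding the two contributions gives $({\rm{ev}}\circ c_f)_*(x)=\pm(1-d^2)\,\eta$.

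The crux of the argument — and the step I expect to be the main obstacle — is precisely this last non-cancellation: one must recognize that post-composition and pre-composition contribute with the \emph{incommensurate} weights $1$ and $d^2$, so that their difference $1-d^2$ never vanishes for $d\geq 2$. Had the two orbits acted with equal weight they could have cancelled, and the whole class would have died; it is the quadratic dependence of a degree-$d$ self-map on $\pi_3(S^2)$ that rules this out. Granting this, since ${\rm{ev}}_*$ is a rational isomorphism on $\pi_3$, the class $(c_f)_*(x)$ is nonzero in $\pi_3({\rm{Rat}}_d)\otimes\Bbb{Q}$; passing back through the Hurewicz isomorphisms of the first paragraph shows that $(c_f)_*\colon H_3({\rm{PSL}}_2(\Bbb{C});\Bbb{Q})\to H_3({\rm{Rat}}_d;\Bbb{Q})$ is a nonzero map of one-dimensional $\Bbb{Q}$-vector spaces, hence an isomorphism.
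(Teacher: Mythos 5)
Your argument is, for $d>3$, essentially the paper's own proof: the same rational Hurewicz reduction (run on the universal cover $\widetilde{\rm{Rat}}_d$, with the transfer supplying the surjection $H_3(\widetilde{\rm{Rat}}_d;\Bbb{Q})\rightarrow H_3({\rm{Rat}}_d;\Bbb{Q})$), the same detection by evaluation at a point, and the same splitting of the conjugation orbit into post- and pre-composition contributions with the incommensurate weights $1$ and $d^2$, giving $1-d^2\neq 0$. The differences are cosmetic: the paper retracts to ${\rm{PSU}}_2$ and lifts to ${\rm{SU}}_2$ so as to quote the Hopf fibration, where you use ${\rm{SO}}(3)\rightarrow S^2$ with circle fiber. (Two small quibbles: your claim that ${\rm{ev}}_*$ is a rational isomorphism on $\pi_3$ is not needed --- nonvanishing of the composite already forces nonvanishing of $(c_f)_*$ --- and its justification via ``${\rm{Rat}}_d^*$ is rationally a circle'' conflates rational homology with rational homotopy.)

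The genuine gap is the range of validity. The lemma is asserted for every $d\geq 2$, but your inputs come from Theorem \ref{Rat homotopy}, which computes $\pi_i({\rm{Rat}}_d)$ only in the range $i<d$; the facts you need --- $\pi_2({\rm{Rat}}_d)$ torsion and $\pi_3({\rm{Rat}}_d)$ of rank one --- are therefore available only for $d>3$. For $d=3$ that theorem says nothing about $\pi_3({\rm{Rat}}_3)$, so you cannot assert that your map lives ``between one-dimensional spaces.'' For $d=2$ the argument does not merely lack justification, it fails: the universal cover of ${\rm{Rat}}_2$ is homotopy equivalent to $S^2\times S^3$, so $\pi_2({\rm{Rat}}_2)\otimes\Bbb{Q}\cong\Bbb{Q}\neq 0$, the space $\pi_3({\rm{Rat}}_2)\otimes\Bbb{Q}$ is two-dimensional, and the Hurewicz map to the one-dimensional $H_3$ kills exactly the Hopf direction coming from $\pi_3(S^2)$ --- precisely the sort of class that evaluation into $\pi_3(\Bbb{CP}^1)$ detects. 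So in low degrees, nonvanishing of $({\rm{ev}}\circ c_f)_*$ on $\pi_3\otimes\Bbb{Q}$ does not imply nonvanishing of $(c_f)_*$ on $H_3$, and your final sentence (``passing back through the Hurewicz isomorphisms'') is unsupported. The paper closes this hole with a separate device you would need to add: conjugation commutes with iteration, so $f\mapsto f\circ f$ gives an equivariant map ${\rm{Rat}}_d\rightarrow{\rm{Rat}}_{d^2}$; since $d^2>4$ when $d=2,3$, the already-established case of the lemma for ${\rm{Rat}}_{d^2}$ forces the isomorphism for ${\rm{Rat}}_d$ by commutativity of the resulting diagram on $H_3$, all these vector spaces being one-dimensional by Proposition \ref{Rat homology}.
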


Assuming the lemma, we now prove our first main result:
\begin{proof}[Proof of Theorem \ref{main 1}]
By the preceding discussion,  Lemma \ref{main lemma} implies that the inclusion \eqref{fiber inclusion} induces an isomorphism 
$H^*({\rm{Rat}}_d;\Bbb{Q})\stackrel{\sim}{\rightarrow} H^*({\rm{PSL}}_2(\Bbb{C});\Bbb{Q})$. Invoking the generalized Leray-Hirsch theorem (\cite[Theorem 2]{MR2078574}), we obtain   isomorphism \eqref{isomorphism}. The graded vector spaces $H^*({\rm{PSL}}_2(\Bbb{C});\Bbb{Q})$ and $H^*({\rm{Rat}}_d;\Bbb{Q})$ are then isomorphic and  $H^*(\mathcal{M}_d;\Bbb{Q})$ is non-trivial only in degree $0$. 
\end{proof}

\begin{proof}[Proof of Lemma \ref{main lemma}]
The action of ${\rm{PSL}}_2(\Bbb{C})$ on ${\rm{Rat}}_d$ lifts to the action of ${\rm{SL}}_2(\Bbb{C})$ on the universal cover $\widetilde{\rm{Rat}}_d$ mentioned in Remark \ref{action on cover}.
Supposing for now that $d>3$, by Theorem \ref{Rat homotopy} we have 
\begin{equation}\label{auxiliary2}
\begin{split}
&\pi_2(\widetilde{\rm{Rat}}_d)\cong\pi_2({\rm{Rat}}_d)\cong\pi_4(S^2)\oplus\pi_2(S^3)\cong\Bbb{Z}/2\Bbb{Z}, and\\
&\pi_3(\widetilde{\rm{Rat}}_d)\cong\pi_3({\rm{Rat}}_d)\cong\pi_5(S^2)\oplus\pi_3(S^3)\cong\Bbb{Z}/2\Bbb{Z}\oplus\Bbb{Z}.
\end{split}
\end{equation}
As such, $\widetilde{\rm{Rat}}_d$ is a simply connected space with $\pi_2(\widetilde{\rm{Rat}}_d)\otimes\Bbb{Q}=0$. The rational Hurewicz theorem now implies that 
$\Bbb{Q}\cong\pi_3(\widetilde{\rm{Rat}}_d)\otimes\Bbb{Q}\rightarrow H_3(\widetilde{\rm{Rat}}_d;\Bbb{Q})$ 
is an isomorphism. This yields the commutative diagram 
\begin{equation}\label{diagram}
\xymatrixcolsep{5pc}\xymatrix{\pi_3({\rm{SL}}_2(\Bbb{C}))\otimes\Bbb{Q} \ar[d]^{\sim}_{\text{rational Hurewicz}}\ar[r] 
& \pi_3(\widetilde{\rm{Rat}}_d)\otimes\Bbb{Q} \ar[d]^{\text{rational Hurewicz}}_{\sim}\\
H_3({\rm{SL}}_2(\Bbb{C});\Bbb{Q}) \ar[d]^{\sim} \ar[r]& H_3(\widetilde{\rm{Rat}}_d;\Bbb{Q}) \ar@{->>}[d]\\
H_3({\rm{PSL}}_2(\Bbb{C});\Bbb{Q})\ar[r]& H_3({\rm{Rat}}_d;\Bbb{Q})\ar@/_/[u]_{\text{transfer}}.
}
\end{equation} 
In the right column, the first and the last vector spaces $\pi_3(\widetilde{\rm{Rat}}_d)\otimes\Bbb{Q}$ and $H_3({\rm{Rat}}_d;\Bbb{Q})$ are one-dimensional (cf. \eqref{auxiliary2} and Proposition \ref{Rat homology}).  In the same column, the first arrow is an isomorphism due to the rational Hurewicz theorem, and the second one is surjective as it has a section given by the transfer homomorphism.  We deduce that in diagram \eqref{diagram} all vector spaces are one-dimensional and all vertical arrows are isomorphisms. Hence, to establish the isomorphism \eqref{main isomorphism}, it suffices to show that the following linear transformation of one-dimensional $\Bbb{Q}$-vector spaces
$$
\pi_3({\rm{PSL}}_2(\Bbb{C}))\otimes\Bbb{Q}\cong\pi_3({\rm{SL}}_2(\Bbb{C}))\otimes\Bbb{Q}\rightarrow\pi_3(\widetilde{\rm{Rat}}_d)\otimes\Bbb{Q}\cong\pi_3({\rm{Rat}}_d)\otimes\Bbb{Q}
$$
induced by the inclusion of an orbit for the conjugation action of ${\rm{PSL}}_2(\Bbb{C})$ on ${\rm{Rat}}_d$ is an isomorphism.\\
\indent
Fix an arbitrary $f_0$ and consider the corresponding orbit.  Post-composing the last homomorphism with the map 
$$\pi_3({\rm{Rat}}_d)\otimes\Bbb{Q}\rightarrow\pi_3(\Bbb{CP}^1)\otimes\Bbb{Q}\cong\Bbb{Q}$$
induced by evaluation  
$f\mapsto f(\infty)$, we observe that it suffices to show that 
\begin{equation}\label{auxiliary3}
\begin{cases}
{\rm{PSL}}_2(\Bbb{C})\rightarrow\Bbb{CP}^1\\
\alpha
\mapsto 
\alpha\circ f_0\circ\alpha^{-1}\big|_{z=\infty}
\end{cases}    
\end{equation}
induces an isomorphism on the third rational homotopy groups. Notice here that ${\rm{Rat}}_d$ is connected and hence the homotopy class of \eqref{auxiliary3} is independent of our choice of $f_0$. We can now replace ${\rm{PSL}}_2(\Bbb{C})$ with its maximal compact subgroup ${\rm{PSU}}_2$ to which ${\rm{PSL}}_2(\Bbb{C})$ deformation retracts. The resulting map may be written as the composition of 
\begin{equation}\label{auxiliary4}
\begin{cases}
{\rm{PSU}}_2\rightarrow{\rm{PSU}}_2\times{\rm{PSU}}_2\\
\alpha\mapsto(\alpha,\alpha^{-1})
\end{cases}
\end{equation}
and 
\begin{equation}\label{auxiliary5}
\begin{cases}
{\rm{PSU}}_2\times{\rm{PSU}}_2\rightarrow\Bbb{CP}^1\\
(\alpha,\beta)\mapsto\alpha\circ f_0\circ\beta|_{z=\infty}.
\end{cases}
\end{equation}
Hence, the homomorphism that \eqref{auxiliary3} induces on $\pi_3$ can be written as a composition accordingly. Identifying $\pi_3\left({\rm{PSU}}_2\right)$ with
$$
\pi_3\left({\rm{SU}}_2=S^3\right)\cong\Bbb{Z},
$$
the map \eqref{auxiliary4} induces 
$$
\Bbb{Z}\rightarrow\Bbb{Z}\times\Bbb{Z}
$$
$$
n\mapsto (n,-n)
$$
on $\pi_3$ because the inverse map $\alpha\mapsto \alpha^{-1}$ on the odd-dimensional Lie group ${\rm{SU}}_2$ is of degree $-1$.  Switching to \eqref{auxiliary5}, restrictions to the first and the second components of the domain are homotopic to
$$
\begin{cases}
{\rm{PSU}}_2\rightarrow\Bbb{CP}^1\\
\alpha\mapsto\alpha(\infty)
\end{cases}
$$
and 
$$
\begin{cases}
{\rm{PSU}}_2\rightarrow\Bbb{CP}^1\\
\beta\mapsto f_0\left(\beta(\infty)\right)
\end{cases}
$$
respectively.
Pre-composing with the covering map ${\rm{SU}}_2\rightarrow{\rm{PSU}}_2$, we need to analyze the homomorphisms induced on $\pi_3$ by  
\begin{equation}\label{auxiliary6}
\begin{cases}
{\rm{SU}}_2=\left\{\begin{bmatrix}
r&-\bar{t}\\
t&\bar{r}
\end{bmatrix}\,\Bigg|\, |r|^2+|t|^2=1\right\}\rightarrow\Bbb{CP}^1\\
\begin{bmatrix}
r&-\bar{t}\\
t&\bar{r}
\end{bmatrix}\mapsto\frac{r}{t}
\end{cases}
\end{equation}
and
\begin{equation}\label{auxiliary7}
\begin{cases}
{\rm{SU}}_2=\left\{\begin{bmatrix}
r&-\bar{t}\\
t&\bar{r}
\end{bmatrix}\,\Bigg|\, |r|^2+|t|^2=1\right\}\rightarrow\Bbb{CP}^1\\
\begin{bmatrix}
r&-\bar{t}\\
t&\bar{r}
\end{bmatrix}\mapsto f_0\left(\frac{r}{t}\right).
\end{cases}
\end{equation}
The first one is the Hopf Fibration which gives rise to an isomorphism 
$$
\pi_3\left({\rm{SU}}_2=S^3\right)\rightarrow\pi_3\left(\Bbb{CP}^1=S^2\right)
$$
of infinite cyclic groups. As for \eqref{auxiliary7}, the map is the post-composition of the Hopf Fibration \eqref{auxiliary6} with a degree $d$ map $f_0:\Bbb{CP}^1\rightarrow\Bbb{CP}^1$ and thus amounts to multiplication by $d^2$ on $\pi_3$ (cf. \cite[chap. 4.B]{MR1867354}). Putting these all together, we see that the homomorphism induced by \eqref{auxiliary5} on $\pi_3$ can be described as 
$$
\Bbb{Z}\times\Bbb{Z}\rightarrow\Bbb{Z}
$$
$$
(m,n)\mapsto m+d^2n.
$$
We conclude that on the third homotopy groups \eqref{auxiliary3} induces 
\begin{equation}\label{auxiliary8}
\pi_3({\rm{PSL}}_2(\Bbb{C}))\cong\Bbb{Z}\rightarrow\pi_3(\Bbb{CP}^1)\cong\Bbb{Z}\end{equation}
$$
n\mapsto(1-d^2)n
$$
and thus, an isomorphism 
$\pi_3({\rm{PSL}}_2(\Bbb{C}))\otimes\Bbb{Q}\stackrel{\sim}{\rightarrow}\pi_3(\Bbb{CP}^1)\otimes\Bbb{Q}.$
\\
\indent
The final step is to establish isomorphism \eqref{main isomorphism} in the case where $d=2$ or $d=3$. The conjugation action \eqref{auxiliary1} of the group of M\"obius transformations is compatible with  iteration; hence, we have the commutative diagram below
\begin{equation*}
\xymatrixcolsep{5pc}\xymatrix{
{\rm{PSL}}_2(\Bbb{C})\ar[dr]_{\text{orbit inclusion}\quad}\ar[r]^{\quad\text{orbit inclusion}}&{\rm{Rat}}_d\ar[d]^{f\mapsto f\circ f}\\
&{\rm{Rat}}_{d^2}
}    
\end{equation*}
and the corresponding diagram of homology groups:
\begin{equation*}
\xymatrix{
H_3({\rm{PSL}}_2(\Bbb{C});\Bbb{Q})\ar[dr]\ar[r]& H_3({\rm{Rat}}_d;\Bbb{Q})\ar[d]\\
& H_3\left({\rm{Rat}}_{d^2};\Bbb{Q}\right).
}    
\end{equation*}
As $d^2>4$, we already know that the diagonal arrow $H_3({\rm{PSL}}_2(\Bbb{C});\Bbb{Q})\rightarrow H_3\left({\rm{Rat}}_{d^2};\Bbb{Q}\right)$ is an isomorphism. This implies the same for
$H_3({\rm{PSL}}_2(\Bbb{C});\Bbb{Q})\rightarrow H_3({\rm{Rat}}_d;\Bbb{Q})$ because all of the vector spaces involved are one-dimensional. 
\end{proof}

\begin{remark}
Lemma \ref{main lemma} can also be used to exhibit a cycle generating  $H_3({\rm{Rat}}_d;\Bbb{Q})$. It suffices to consider the orbit of a degree $d$ map, say $f(z)=z^d$, under the conjugation action of M\"obius transformation corresponding to matrices belonging to 
$$
{\rm{SU}}_2=\left\{\begin{bmatrix}
r&-\bar{t}\,\\
t&\bar{r}\,
\end{bmatrix}\,\bigg|\, |r|^2+|t|^2=1\right\}.
$$
Thinking of $S^3$ as the unit sphere in $\Bbb{C}^2$, we obtain the cycle 
$$
(r,t)\in S^3\mapsto
\frac{\sum_{i=0}^d\binom{d}{i}\left(r\bar{r}^i\bar{t}^{d-i}+(-1)^{i+1}\bar{t}t^ir^{d-i}\right)z^i}
{\sum_{i=0}^d\binom{d}{i}\left(t\bar{r}^i\bar{t}^{d-i}+(-1)^i\bar{r}t^ir^{d-i}\right)z^i}
\in{\rm{Rat}}_d
$$
which generates $H_3({\rm{Rat}}_d;\Bbb{Q})$.
\end{remark}
\begin{remark}
While the moduli space $\mathcal{M}_d={\rm{Rat}}_d\big/{\rm{SL}}_2(\Bbb{C})$ is built from ${\rm{Rat}}_d$, this is not the only interesting subvariety of the projective space $\Bbb{CP}^{2d+1}$ of pairs of polynomials of degree at most $d$. Indeed, there are two other natural subvarieties determined by the conjugation action of ${\rm{SL}}_2(\Bbb{C})$ (or ${\rm{PSL}}_2(\Bbb{C})$); the loci of stable and semistable points   $\left({\Bbb{CP}}^{2d+1}\right)^{\rm{s}}\subseteq\left({\Bbb{CP}}^{2d+1}\right)^{\rm{ss}}$ studied by Silverman \cite{MR1635900}. They lie strictly between ${\rm{Rat}}_d$ and $\Bbb{CP}^{2d+1}$ and give rise to the geometric quotient 
$\mathcal{M}_d^{\rm{s}}:=\left({\Bbb{CP}}^{2d+1}\right)^{\rm{s}}\big/{\rm{SL}}_2(\Bbb{C})$ as well as  the geometric invariant theory quotient $\mathcal{M}_d^{\rm{ss}}$ for the action ${\rm{SL}}_2(\Bbb{C})\curvearrowright\left({\Bbb{CP}}^{2d+1}\right)^{\rm{ss}}$. The latter is a projective variety containing $\mathcal{M}_d$ and $\mathcal{M}_d^{\rm{s}}$. It is interesting to note that our results do not carry over  to these other quotients.\\
\indent
Theorem \ref{main 1} on the rational acyclicity of $\mathcal{M}_d$ can fail for $\mathcal{M}_d^{\rm{s}}$ and $\mathcal{M}_d^{\rm{ss}}$. For instance, in degree two, $\mathcal{M}_2^{\rm{s}}=\mathcal{M}_2^{\rm{ss}}$ can be identified with the projective plane $\Bbb{CP}^2$ \cite[Theorem 6.1]{MR1635900}. The same is true about Theorem \ref{main 2}. Unlike 
the case of $\mathcal{M}_d$, the spaces $\mathcal{M}_d^{\rm{s}}$ and $\mathcal{M}_d^{\rm{ss}}$ are simply connected for any choice of $d$. For the compactification $\mathcal{M}_d^{\rm{ss}}$, this follows from \cite{MR3348559}. As for the geometric quotient $\mathcal{M}_d^{\rm{s}}$, the homomorphism $\pi_1\left(\left({\Bbb{CP}}^{2d+1}\right)^{\rm{s}}\right)\rightarrow\pi_1(\mathcal{M}_d^{\rm{s}})$ is surjective (cf. \cite{MR2852978}) and we claim that -- unlike ${\rm{Rat}}_d$ -- the variety $\left({\Bbb{CP}}^{2d+1}\right)^{\rm{s}}$ is simply connected. To see this, notice that the closed subvariety  
$\Bbb{CP}^{2d+1}-\left({\Bbb{CP}}^{2d+1}\right)^{\rm{s}}$ of the resultant hypersurface is a proper subvariety as it does not contain points from $\left({\Bbb{CP}}^{2d+1}\right)^{\rm{s}}-{\rm{Rat}}_d$, and is thus of real codimension at least four in ${\Bbb{CP}}^{2d+1}$ due to the irreducibility of the resultant hypersurface (compare with the argument in \cite[Lemma 6.4]{MR3709645}). Consequently, $\left({\Bbb{CP}}^{2d+1}\right)^{\rm{s}}$ and $\Bbb{CP}^{2d+1}$ have the same fundamental groups and the same homology groups up to dimension two. Notice  moreover that this means -- in contrast with \eqref{fibration5} -- that for the orbifold fiber bundle 
$$
{\rm{PSL}}_2(\Bbb{C})\rightarrow\left({\Bbb{CP}}^{2d+1}\right)^{\rm{s}}\rightarrow\mathcal{M}_d^{\rm{s}}    
$$
the rational cohomology of the total space differs from that of the fiber because  $H^2\left(\left({\Bbb{CP}}^{2d+1}\right)^{\rm{s}};\Bbb{Q}\right)$ is isomorphic to $H^2(\Bbb{CP}^{2d+1};\Bbb{Q})$ which is non-trivial.
\end{remark}

\begin{remark}
The generalized Leray-Hirsch theorem is not strictly required to prove Theorem \ref{main 1}. Indeed, the Borel construction\footnote{For a topological group $G$ acting on a topological space $X$, we will denote the homotopy quotient (not to be confused with the GIT quotient) by $X\hcoker G$. It is given by $X\times_G \mathrm{E}G$ where $\mathrm{E}G$ is a contractible space on which $G$ acts freely and fits into a fiber sequence $X\to X\hcoker G \to \mathrm{B}G$.} ${\rm{Rat}}_d\hcoker{\rm{PSL}}_2(\Bbb{C})$ -- which is fibered over  $\mathrm{BPSL}_2(\Bbb{C})$ -- has the same rational cohomology as $\mathcal{M}_d$. Therefore, it is enough to show that ${\rm{Rat}}_d\hcoker{\rm{PSL}}_2(\Bbb{C})$ is rationally acyclic which can be similarly deduced from Lemma \ref{main lemma} and the Serre spectral sequence for the fibration given by the Borel construction \[{\rm{Rat}}_d\hcoker{\rm{PSL}}_2(\Bbb{C})\to \mathrm{BPSL}_2(\Bbb{C}).\]
To see why the natural map 
\[
{\rm{Rat}}_d\hcoker{\rm{PSL}}_2(\Bbb{C})\to \mathcal{M}_d,
\]
is a rationally acyclic map, note that by Lemma \ref{proper} and the discussion in \S\ref{background2} the group $\mathrm{PSL}_2(\Bbb{C})$ acts properly on $\mathrm{Rat}_d$ with finite stabilizer subgroups. It then follows from the Palais slice theorem \cite{MR0126506} that every orbit of the conjugation action of $\mathrm{PSL}_2(\Bbb{C})$ on $\mathrm{Rat}_d$ is an equivariant neighborhood retract. Now, let $p$ be a prime number that does not divide the order of the stabilizer subgroups and let $\pi$ denote the natural map between the homotopy quotient and the quotient
\[
\pi:{\rm{Rat}}_d\hcoker{\rm{PSL}}_2(\Bbb{C})\to {\rm{Rat}}_d\slash{\rm{PSL}}_2(\Bbb{C})=\mathcal{M}_d.
\]
This is not a fiber bundle but the fiber $\pi^{-1}(\langle f\rangle)$ of  each orbit $\langle f\rangle\in \mathcal{M}_d$ is the classifying space $\mathrm{BStab}(f)$. As such, we consider  the sheaf $R^n\pi_*  \Bbb{Z}[\frac{1}{p}]$ which is the sheaf associated with the presheaf 
\[
U\mapsto H^n\left(\pi^{-1}(U);\,\Bbb{Z}[\frac{1}{p}]\right).
\]
Its stalk at the orbit $\langle f\rangle$ is given by the colimit over the open neighborhoods $U$ containing $\langle f\rangle$. Given that every orbit is an equivariant neighborhood retract, $\pi^{-1}(U)$ deformation retracts to $\pi^{-1}(\langle f\rangle)$ for a cofinal of open neighborhoods $U$ of $\langle f\rangle$. Therefore, the stalk of $R^n\pi_*  \Bbb{Z}[\frac{1}{p}]$ at $\langle f\rangle$ is the same as $H^n(\mathrm{BStab}(f);\Bbb{Z}[\frac{1}{p}])$ which is zero for $n>0$ because $p$ does not divide the order of  $\mathrm{Stab}(f)$. Hence, the higher direct images $R^n\pi_*  \Bbb{Z}[\frac{1}{p}]$ are all trivial for $n>0$. Thus, the Leray spectral sequence for the map $\pi$ in $\Bbb{Z}[\frac{1}{p}]$-cohomology collapses which implies that ${\rm{Rat}}_d\hcoker{\rm{PSL}}_2(\Bbb{C})$ and $\mathcal{M}_d$ have the same $\Bbb{Z}[\frac{1}{p}]$-cohomology and, in particular, the same rational cohomology. The former space is rationally acyclic; and Theorem \ref{main 1} then follows.\\
\indent
On the other hand, it is easy to see that $\pi$ does not induce a weak homotopy equivalence. There are orbits with cyclic stabilizer groups. Let $\langle f\rangle$ be one of those orbits. The composition of the following maps
\[
\mathrm{BStab}(f)\xrightarrow{\iota_f} {\rm{Rat}}_d\hcoker{\rm{PSL}}_2(\Bbb{C})\xrightarrow{\pi}\mathcal{M}_d,
\]
is null-homotopic. We observe however that $\iota_f$ induces a non-trivial map on cohomology. Therefore, $\pi$ cannot be a weak homotopy equivalence. Consider the composition 
\[
\mathrm{BStab}(f)\xrightarrow{\iota_f} {\rm{Rat}}_d\hcoker{\rm{PSL}}_2(\Bbb{C})\to \mathrm{BPSL}_2(\Bbb{C}).
\]
Since $\mathrm{Stab}(f)$ sits in the maximal torus of $\mathrm{PSL}_2(\Bbb{C})$,  the induced map 
\[
H^*(\mathrm{BPSL}_2(\Bbb{C});\Bbb{Z})\to H^*(\mathrm{BStab}(f);\Bbb{Z})
\]
is nontrivial \cite[Theorem 2]{MR124050}. Hence, $\pi$ does  not induce a homotopy equivalence between ${\rm{Rat}}_d\hcoker{\rm{PSL}}_2(\Bbb{C})$ and $\mathcal{M}_d$. 
In fact, these spaces are not homotopy equivalent since their fundamental groups are different by Theorem \ref{main 2}.  
\end{remark}

\subsection{Application to marked moduli spaces of rational maps}\label{application marked}
Every non-trivial rational cohomology group of ${\rm{PSL}}_2(\Bbb{C})$ is one-dimensional and, hence, our strategy from \S\ref{rational homology} can be applied to several other orbifolds which admit maps to $\mathcal{M}_d$. 
\begin{proposition}\label{proposition marked}
Let ${\rm{N}}$ be a topological space equipped with a continuous action of the group 
$G={\rm{PSL}}_2(\Bbb{C}) \text{ or } {\rm{SL}}_2(\Bbb{C})$ 
with the property that the stabilizers of the action are finite 
and 
$$
{\rm{N}}\rightarrow\mathcal{N}:={\rm{N}}/G
$$
is an orbifold fiber bundle in the sense of \cite{MR2078574}; e.g. ${\rm{N}}$ is a smooth variety and ${\rm{N}}\rightarrow\mathcal{N}$ 
is a geometric quotient for the action of $G$ in the sense of GIT. Moreover, suppose that there is a map ${\rm{N}}\rightarrow{\rm{Rat}}_d$
that respects the $G$-actions where $G$ acts on the ${\rm{PSL}}_2(\Bbb{C})$-space ${\rm{Rat}}_d$ via the obvious homomorphism $G\rightarrow{\rm{PSL}}_2(\Bbb{C})$. In such a situation one has the following isomorphism of graded $\Bbb{Q}$-vector spaces:
$$
H^*({\rm{N}};\Bbb{Q})\cong H^*(\mathcal{N};\Bbb{Q})\otimes H^*({\rm{PSL}}_2(\Bbb{C});\Bbb{Q}).
$$
\end{proposition}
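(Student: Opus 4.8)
The plan is to reduce the statement to the single hypothesis required by the generalized Leray-Hirsch theorem of Peters and Steenbrink \cite{MR2078574} and then to verify that hypothesis by transporting Lemma \ref{main lemma} along the equivariant map ${\rm{N}}\rightarrow{\rm{Rat}}_d$. Since $G$ is ${\rm{SL}}_2(\Bbb{C})$ or ${\rm{PSL}}_2(\Bbb{C})$, its rational cohomology agrees with that of ${\rm{PSL}}_2(\Bbb{C})$: the double cover ${\rm{SL}}_2(\Bbb{C})\rightarrow{\rm{PSL}}_2(\Bbb{C})$ has finite fibers and so induces an isomorphism on $H^*(-;\Bbb{Q})$, and both groups are rationally an exterior algebra on a single degree-$3$ class. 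In particular $H^*(G;\Bbb{Q})$ is concentrated in degrees $0$ and $3$. Because ${\rm{N}}\rightarrow\mathcal{N}$ is assumed to be an orbifold fiber bundle for a connected group with finite stabilizers, \cite[Theorem 2]{MR2078574} will yield the desired isomorphism $H^*({\rm{N}};\Bbb{Q})\cong H^*(\mathcal{N};\Bbb{Q})\otimes H^*({\rm{PSL}}_2(\Bbb{C});\Bbb{Q})$ provided that restriction to a single orbit induces a surjection $H^*({\rm{N}};\Bbb{Q})\twoheadrightarrow H^*(G;\Bbb{Q})$.

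To verify this surjectivity I would fix a point $x\in{\rm{N}}$ and let $f\in{\rm{Rat}}_d$ be its image under the equivariant map $g:{\rm{N}}\rightarrow{\rm{Rat}}_d$. Writing $i:G\rightarrow{\rm{N}}$, $\alpha\mapsto\alpha\cdot x$, for the orbit inclusion, equivariance of $g$ forces the composite $g\circ i:G\rightarrow{\rm{Rat}}_d$, $\alpha\mapsto\alpha\cdot f$, to be exactly the conjugation orbit inclusion \eqref{fiber inclusion} at $f$. Passing to rational cohomology, the induced maps factor as
\begin{equation*}
H^*({\rm{Rat}}_d;\Bbb{Q})\xrightarrow{g^*}H^*({\rm{N}};\Bbb{Q})\xrightarrow{i^*}H^*(G;\Bbb{Q}),
\end{equation*}
and Lemma \ref{main lemma}, together with universal coefficients over the field $\Bbb{Q}$ (which converts the $H_3$-isomorphism there into the dual $H^3$-isomorphism), shows that the composite $i^*\circ g^*=(g\circ i)^*$ is an isomorphism in degree $3$. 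Since $H^*(G;\Bbb{Q})$ vanishes outside degrees $0$ and $3$, and the degree-$0$ part is handled by connectedness, it follows that $i^*$ is surjective in every degree.

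One minor point deserves attention: the orbit through $x$ is $G/{\rm{Stab}}(x)$ rather than $G$ itself. However, ${\rm{Stab}}(x)$ is finite by hypothesis, so the quotient map $G\rightarrow G/{\rm{Stab}}(x)$ induces an isomorphism on rational cohomology, and the factorization above remains valid after this harmless identification. With surjectivity established, an application of \cite[Theorem 2]{MR2078574} completes the argument.

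As for the main obstacle, there is essentially none beyond bookkeeping: the proposition is a direct abstraction of the proof of Theorem \ref{main 1}, and its only real content is the observation that any $G$-equivariant map to ${\rm{Rat}}_d$ lets one pull back the orbit-inclusion isomorphism of Lemma \ref{main lemma} so as to certify the Leray-Hirsch surjectivity. The single place demanding genuine care is confirming that the hypotheses of the Peters-Steenbrink theorem — connected structure group, finite stabilizers, and the orbifold-bundle (geometric-quotient) structure — hold in the stated generality; but these are precisely what is built into the assumptions on ${\rm{N}}\rightarrow\mathcal{N}$.
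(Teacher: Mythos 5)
Your proposal is correct and follows essentially the same route as the paper: reduce to the Leray--Hirsch surjectivity hypothesis of \cite[Theorem 2]{MR2078574} and certify it by factoring the pullback along the conjugation orbit inclusion \eqref{fiber inclusion} --- an isomorphism on $H^3$ by Lemma \ref{main lemma} --- through $H^3({\rm{N}};\Bbb{Q})$ via the equivariant map ${\rm{N}}\rightarrow{\rm{Rat}}_d$. Your additional remarks (universal coefficients over $\Bbb{Q}$, the $G/{\rm{Stab}}(x)$ versus $G$ point, and the ${\rm{SL}}_2(\Bbb{C})$ versus ${\rm{PSL}}_2(\Bbb{C})$ reduction) are exactly the bookkeeping the paper handles in its first line or leaves implicit.
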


\begin{proof}
The canonical projection ${\rm{SL}}_2(\Bbb{C})\rightarrow{\rm{PSL}}_2(\Bbb{C})$ induces isomorphisms of rational cohomology groups in all dimensions so we may safely assume that
$G={\rm{PSL}}_2(\Bbb{C})$. By the same logic used in the proof of Theorem \ref{main 1}, the desired isomorphism can be obtained by applying  the generalized Leray-Hirsch theorem  
\cite[Theorem 2]{MR2078574} to 
$$
{\rm{PSL}}_2(\Bbb{C})\rightarrow{\rm{N}}\rightarrow\mathcal{N}:={\rm{N}}\big/{\rm{PSL}}_2(\Bbb{C}) 
$$
provided that the fiber inclusion induces a surjection $H^3({\rm{N}};\Bbb{Q})\twoheadrightarrow H^3({\rm{PSL}}_2(\Bbb{C});\Bbb{Q})$. This follows from the commutative diagram below where the diagonal arrow is an isomorphism by virtue of Lemma \ref{main lemma}
\begin{equation*}
\xymatrix{
H^3({\rm{Rat}}_d;\Bbb{Q})\ar[d]\ar[dr]^{\sim}& \\
H^3({\rm{N}};\Bbb{Q})\ar[r]& H^3\left({\rm{PSL}}_2(\Bbb{C});\Bbb{Q}\right)\cong\Bbb{Q}.
}   
\end{equation*}
\end{proof}
An immediate example of ${\rm{PSL}}_2(\Bbb{C})$-spaces admitting equivariant  maps to ${\rm{Rat}}_d$ is provided by ``marked'' moduli spaces of rational maps \cite[\S 6]{MR1246482}. One can work with  the \textit{fixed-point marked}, \textit{critically marked}  or \textit{totally marked} parameter spaces of rational maps of degree $d$ in which respectively the fixed points, the critical points or both fixed points and critical points are specified as ordered tuples  (of sizes $d+1$, $2d-2$ or $3d-1$) of points on the Riemann sphere. These parameter spaces and the corresponding moduli spaces (i.e. their quotients under the diagonal action of ${\rm{PSL}}_2(\Bbb{C})$) are denoted by  ${\rm{Rat}}^{\rm{fm}}_d$, ${\rm{Rat}}^{\rm{cm}}_d$, ${\rm{Rat}}^{\rm{tm}}_d$ 
and $\mathcal{M}^{\rm{fm}}_d$, $\mathcal{M}^{\rm{cm}}_d$, $\mathcal{M}^{\rm{tm}}_d$ respectively. 

\begin{corollary}\label{corollary marked}
We have the corresponding isomorphisms of graded $\Bbb{Q}$-vector spaces:
\begin{equation*}
\begin{split}
&H^*({\rm{Rat}}^{\rm{fm}}_d;\Bbb{Q})\cong H^*(\mathcal{M}^{\rm{fm}}_d;\Bbb{Q})\otimes H^*({\rm{PSL}}_2(\Bbb{C});\Bbb{Q});\\
&H^*({\rm{Rat}}^{\rm{cm}}_d;\Bbb{Q})\cong H^*(\mathcal{M}^{\rm{cm}}_d;\Bbb{Q})\otimes H^*({\rm{PSL}}_2(\Bbb{C});\Bbb{Q});\\
&H^*({\rm{Rat}}^{\rm{tm}}_d;\Bbb{Q})\cong H^*(\mathcal{M}^{\rm{tm}}_d;\Bbb{Q})\otimes H^*({\rm{PSL}}_2(\Bbb{C});\Bbb{Q}).\\
\end{split}    
\end{equation*}
\end{corollary}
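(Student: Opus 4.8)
The plan is to deduce each of the three isomorphisms from Proposition \ref{proposition marked} by checking its hypotheses for the relevant marked parameter space. For each of ${\rm{Rat}}^{\rm{fm}}_d$, ${\rm{Rat}}^{\rm{cm}}_d$ and ${\rm{Rat}}^{\rm{tm}}_d$, the equivariant map to ${\rm{Rat}}_d$ demanded by the proposition is the forgetful map that discards the marking and retains only the underlying rational map $f$. Because the marking consists of an ordered tuple of points of $\Bbb{CP}^1$ and ${\rm{PSL}}_2(\Bbb{C})$ acts diagonally---simultaneously conjugating $f$ and moving the marked points---this forgetful map is manifestly ${\rm{PSL}}_2(\Bbb{C})$-equivariant, where ${\rm{PSL}}_2(\Bbb{C})$ acts on ${\rm{Rat}}_d$ by conjugation via the obvious homomorphism.

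First I would check that the stabilizers of the diagonal action are finite. The stabilizer of a marked map is a subgroup of ${\rm{Aut}}(f)$, the group of M\"obius transformations commuting with $f$, which is finite for every $f\in{\rm{Rat}}_d$ with $d\geq 2$ as recalled in \S\ref{background2}. Since an ordered marking pins down at least three distinct points, these stabilizers are in fact typically trivial, but finiteness is all that Proposition \ref{proposition marked} demands.

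Next I would verify the orbifold fiber bundle hypothesis. Each marked parameter space is a smooth subvariety of a product ${\rm{Rat}}_d\times(\Bbb{CP}^1)^k$---with $k$ equal to $d+1$, $2d-2$, or $3d-1$ respectively---cut out by the incidence conditions declaring the marked coordinates to be fixed points, critical points, or both, and the corresponding marked moduli space is its geometric quotient under the diagonal ${\rm{PSL}}_2(\Bbb{C})$-action, as in \cite[\S 6]{MR1246482}. A geometric quotient of a smooth variety by an action with finite stabilizers is an orbifold fiber bundle in the sense of \cite{MR2078574}, so the remaining hypothesis of Proposition \ref{proposition marked} holds. Applying that proposition to each of the three forgetful maps then produces the asserted isomorphisms.

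The main obstacle I anticipate lies in justifying that each marked parameter space is genuinely a smooth variety carrying a geometric quotient, the delicate point being the loci where the fixed points (respectively the critical points) of $f$ collide and the naive ordered-tuple description degenerates. I would address this either by recording multiplicities via the scheme-theoretic fixed-point and ramification schemes, or by invoking the existing constructions of these marked spaces in the literature; once smoothness and the geometric-quotient property are in hand, the equivariant forgetful map feeds into Proposition \ref{proposition marked}---and thereby into the key surjectivity furnished by Lemma \ref{main lemma}---with no further work.
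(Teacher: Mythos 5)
Your proposal is correct and follows essentially the same route as the paper: the paper's own proof is the one-line observation that the statement ``follows at once by Proposition \ref{proposition marked},'' applied to the ${\rm{PSL}}_2(\Bbb{C})$-equivariant forgetful maps ${\rm{Rat}}^{\rm{fm}}_d,{\rm{Rat}}^{\rm{cm}}_d,{\rm{Rat}}^{\rm{tm}}_d\rightarrow{\rm{Rat}}_d$, with the construction of the marked spaces delegated to \cite[\S 6]{MR1246482} exactly as you do. Your additional verification of the hypotheses (finiteness of stabilizers as subgroups of ${\rm{Aut}}(f)$, and the geometric-quotient/orbifold fiber bundle structure) is the content the paper leaves implicit, so nothing further is needed.
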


\begin{proof}
This follows at once by Proposition \ref{proposition marked}.
\end{proof}

\begin{example}\label{critically marked example}
The critically-marked parameter space 
$${\rm{Rat}}^{\rm{cm}}_2=\left\{(f;c_1,c_2)\mid f\text{ a quadratic rational map with critical points }c_1,c_2\right\}$$
is an unramified double cover of ${\rm{Rat}}_2$. 
The corresponding marked moduli space  
$\mathcal{M}^{\rm{cm}}_2$
is of the homotopy type of $S^2$ \cite[Remark 6.5]{MR1246482}. With the help of Corollary \ref{corollary marked}, we obtain the Betti numbers of 
${\rm{Rat}}^{\rm{cm}}_2$
as 
$$
b_i({\rm{Rat}}^{\rm{cm}}_2)=
\begin{cases}
1\quad i=0,2,3,5,\textit{ and}\\
0\quad \textit{otherwise}.
\end{cases}
$$
\end{example}

\begin{example}\label{fixed point marked example}
In his paper on quadratic rational maps, Milnor asks about the homology of the fixed-point marked spaces ${\rm{Rat}}^{\rm{fm}}_2$ and $\mathcal{M}^{\rm{fm}}_2$ 
\cite[Remark 6.7]{MR1246482}. He identifies the latter space with the affine surface
$$
\left\{(\mu_1,\mu_2,\mu_3)\in\Bbb{C}^3\,\big|\,\mu_1+\mu_2+\mu_3=\mu_1\mu_2\mu_3+2\right\}.
$$
One can solve for $\mu_3$ as $$\mu_3=\frac{2-(\mu_1+\mu_2)}{1-\mu_1\mu_2}$$ provided that $\mu_1\mu_2\neq 1$. If $\mu_1\mu_2=1$, we must have $\mu_1=\mu_2=1$ and $\mu_3$ can then be arbitrary. Consequently, $\mathcal{M}^{\rm{fm}}_2$ admits a closed subvariety homeomorphic to $\Bbb{C}$, and collapsing this contractible subvariety results in the homotopy equivalent space $\Bbb{C}^2\big/\{\mu_1\mu_2=1\}$. The curve
$\mu_1\mu_2=1$ collapsed here is of the homotopy type of circle and thus:
$$
\tilde{H}_i(\mathcal{M}^{\rm{fm}}_2;\Bbb{Z})\cong\tilde{H}_i\left(\Bbb{C}^2\big/\{\mu_1\mu_2=1\};\Bbb{Z}\right)\cong\tilde{H}_{i-1}(S^1;\Bbb{Z}). 
$$
Therefore, 
$$
b_i(\mathcal{M}^{\rm{fm}}_2)=
\begin{cases}
1\quad i=0,2,\textit{ and}\\
0\quad \textit{otherwise},
\end{cases}
$$
and by invoking Corollary \ref{corollary marked} we obtain
$$
b_i({\rm{Rat}}^{\rm{fm}}_2)=
\begin{cases}
1\quad i=0,2,3,5\textit{, and}\\
0\quad \textit{otherwise}.
\end{cases}
$$
\end{example}

\begin{example}\label{totally marked example}
The totally marked parameter space ${\rm{Rat}}^{\rm{tm}}_2$ is the space of all sextuples 
$$(f;z_1,z_2,z_3;c_1,c_2)$$
where $f$ is a quadratic rational map with critical points $c_1,c_2$ and fixed points $z_1,z_2,z_3$ (each repeated according to its multiplicity). The diagonal action of the group of M\"obius transformations on ${\rm{Rat}}^{\rm{tm}}_2$ is free and its orbit space, the totally marked moduli space ${\mathcal{M}}^{\rm{tm}}_2$, is a smooth complex surface; cf. \cite{MR3316459}. 
The projection map ${\mathcal{M}}^{\rm{tm}}_2\rightarrow{\mathcal{M}}^{\rm{fm}}_2$ is two-to-one  and ramifies only above the unique singular point of  ${\mathcal{M}}^{\rm{fm}}_2$ \cite[p. 51]{MR1246482}. 
We claim that the Betti numbers of ${\mathcal{M}}^{\rm{tm}}_2$ (and thus those of ${\rm{Rat}}^{\rm{tm}}_2$) are the same as the Betti numbers of ${\mathcal{M}}^{\rm{fm}}_2$
(respectively, of ${\rm{Rat}}^{\rm{fm}}_2$) which were calculated in the previous example. If the first fixed point $z_1$ of $f$ is one of the critical points $c_1$ or $c_2$, the quadratic rational map $f$ possesses a \textit{super-attracting} fixed point and is conjugate to a unique quadratic polynomial of the form $z^2+b$. Thus, the corresponding class in ${\mathcal{M}}^{\rm{tm}}_2$ belongs to either 
$$
\left\{\langle(z^2+(r-r^2);\infty,r,1-r;\infty,0)\rangle\,\big|\, r\in\Bbb{C}\right\}
$$
or 
$$
\left\{\langle(z^2+(r-r^2);\infty,r,1-r;0,\infty)\rangle\,\big|\, r\in\Bbb{C}\right\}.
$$
These are two disjoint closed subvarieties of ${\mathcal{M}}^{\rm{tm}}_2$ each homeomorphic to $\Bbb{C}$. On the other hand, if the fixed point $z_1$ is not a critical point, by putting it at $\infty$ and taking the critical points to be $\pm1$, one can write $f$ in the \textit{mixed normal form} 
$f(z)=\frac{1}{\mu}\left(z+\frac{1}{z}\right)+a$ \cite[Appendix C]{MR1246482}. Here, $\mu$ is the expansion factor by which $f$ acts on the tangent plane at $\infty$. The other two fixed points are different from $0$. Writing them as $z_2=\frac{1}{s}$, $z_3=\frac{1}{t}$ where $s,t\in\Bbb{C}$ and solving for $\mu$ and $a$, we arrive at the following open subset of ${\mathcal{M}}^{\rm{tm}}_2$:
$$
\left\{\left\langle\left(\frac{1}{1-st}\left(z+\frac{1}{z}\right)-\frac{s+t}{1-st};\infty,\frac{1}{s},\frac{1}{t};+1,-1\right)\right\rangle\,\bigg|\, s,t\in\Bbb{C}\, \text{ and }st\neq 1\right\}.\footnote{It is worth mentioning that these computations yield an explicit description of ${\mathcal{M}}^{\rm{tm}}_2$ in $\Bbb{C}^3$ as the affine surface $(1-st)u+(s^2-1)=0$. Finding the \textit{multipliers} of fixed points, one obtains
$\mu_1=\mu=1-st$, $\mu_2=f'\left(\frac{1}{s}\right)=\frac{1-s^2}{1-st}=u$ and $\mu_3=f'\left(\frac{1}{t}\right)=\frac{1-t^2}{1-st}=1+st-t^2\mu_2$. Thus, in these models for ${\mathcal{M}}^{\rm{tm}}_2$ and ${\mathcal{M}}^{\rm{fm}}_2$, the morphism  
${\mathcal{M}}^{\rm{tm}}_2\rightarrow{\mathcal{M}}^{\rm{fm}}_2$
can be described as:
$(s,t,u)\mapsto(\mu_1,\mu_2,\mu_3)=(1-st,u,1+st-t^2u).$}
$$
This is a copy of $\Bbb{C}^2-\{st=1\}$ and as $\mu=1-st\to 0$, we tend to the previous cases where the fixed point $\infty$ was critical too. Consequently, after collapsing the aforementioned disjoint copies of $\Bbb{C}$ in ${\mathcal{M}}^{\rm{tm}}_2$, we obtain the space $\Bbb{C}^2\big/\{st=1\}$ which is of the same homotopy type. We observed in Example \ref{fixed point marked example} that this is the homotopy type of ${\mathcal{M}}^{\rm{fm}}_2$ as well. 
\end{example}

\subsection{Variants \texorpdfstring{$\mathcal{M}_d^{\rm{pre}}$}{Mdpre}  and \texorpdfstring{$\mathcal{M}_d^{\rm{post}}$}{Mdpost} of the moduli space \texorpdfstring{$\mathcal{M}_d$}{Md}}\label{application variant}
Aside from the conjugation action we have studied so far, ${\rm{PSL}}_2(\Bbb{C})$  also acts from the left on ${\rm{Rat}}_d$ through pre- and post-composition with M\"obius transformations. The corresponding quotients will be denoted by
\begin{equation}\label{variants}
\mathcal{M}_d^{\rm{pre}}:={\rm{Rat}}_d\big/f\sim f\circ\alpha^{-1},\quad\text{and}\quad
\mathcal{M}_d^{\rm{post}}:={\rm{Rat}}_d\big/f\sim \alpha\circ f
\text{ where } \alpha\in{\rm{PSL}}_2(\Bbb{C}).
\end{equation}
The action by post-composition is free and thus $\mathcal{M}_d^{\rm{post}}$ fits into a genuine fiber bundle
\begin{equation}\label{fibration1'}
{\rm{PSL}}_2(\Bbb{C})\hookrightarrow{\rm{Rat}}_d\rightarrow\mathcal{M}_d^{\rm{post}}. 
\end{equation}
On the other hand, $\mathcal{M}_d^{\rm{pre}}$ is the moduli space of degree $d$ branched covers $\Bbb{CP}^1\rightarrow\Bbb{CP}^1$ and the action by pre-composition is not free since there may exist non-trivial deck transformations. In this case, we have an orbifold fiber bundle 
\begin{equation}\label{fibration1''}
{\rm{PSL}}_2(\Bbb{C})\rightarrow{\rm{Rat}}_d\rightarrow\mathcal{M}_d^{\rm{pre}}. 
\end{equation}
Once again, there are also pre- and post-composition actions of ${\rm{SL}}_2(\Bbb{C})$ on the universal cover $\widetilde{\rm{Rat}}_d$ that, analogously to \eqref{action2}, are given by
\begin{equation}\label{action2 pre-variant}
\begin{bmatrix}
r&s\\
t&u
\end{bmatrix}\cdot
\left(P(X,Y),Q(X,Y)\right):=\left(P(uX-sY,-tX+rY),Q(uX-sY,-tX+rY)\right),\, and   
\end{equation}
\begin{equation}\label{action2 post-variant}
\begin{bmatrix}
r&s\\
t&u
\end{bmatrix}\cdot\left(P(X,Y),Q(X,Y)\right):=
\left(rP(X,Y)+sQ(X,Y),tP(X,Y)+uQ(X,Y)\right).    
\end{equation}

The computations of  Lemma \ref{main lemma} now carry over and allow us to describe the rational homology of $\mathcal{M}_d^{\rm{pre}}$ and $\mathcal{M}_d^{\rm{post}}$.

\begin{lemma}\label{main lemma variant}
Assuming $d>3$, the inclusion of an orbit of the pre-composition or post-composition action induces isomorphisms in the rows of the commutative diagram \eqref{diagram}.
\end{lemma}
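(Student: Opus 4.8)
The plan is to run the same argument as in Lemma \ref{main lemma}, now tracking how the pre- and post-composition actions interact with the evaluation-and-Hopf-fibration computation. As in the conjugation case, Theorem \ref{Rat homotopy} and the rational Hurewicz theorem furnish, for $d>3$, a commutative diagram exactly like \eqref{diagram} in which all vertical arrows are isomorphisms of one-dimensional $\Bbb{Q}$-vector spaces (this part uses only that $\widetilde{\rm{Rat}}_d$ is simply connected with $\pi_2\otimes\Bbb{Q}=0$ and $\pi_3\otimes\Bbb{Q}\cong\Bbb{Q}$, which is insensitive to which of the three actions we consider). Thus the content of the lemma is again reduced to showing that the orbit inclusion induces an isomorphism on $\pi_3(-)\otimes\Bbb{Q}$, i.e. that the relevant integer multiplier is nonzero.

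For the post-composition action I would argue most cleanly: since this action is free, \eqref{fibration1'} is a genuine fiber bundle, and the required surjectivity $H_3({\rm{PSL}}_2(\Bbb{C});\Bbb{Q})\to H_3({\rm{Rat}}_d;\Bbb{Q})$ is equivalent to the orbit map inducing an isomorphism on rational $\pi_3$. Fixing $f_0$ and post-composing with evaluation $f\mapsto f(\infty)$, the orbit map \eqref{auxiliary5} degenerates to
$$
\begin{cases}
{\rm{PSU}}_2\rightarrow\Bbb{CP}^1\\
\alpha\mapsto\alpha\bigl(f_0(\infty)\bigr),
\end{cases}
$$
so only the first factor of \eqref{auxiliary4}--\eqref{auxiliary5} survives. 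Pulling back along ${\rm{SU}}_2\to{\rm{PSU}}_2$ this is exactly the Hopf fibration \eqref{auxiliary6}, which is an isomorphism on $\pi_3$. Hence the induced map $\pi_3({\rm{PSL}}_2(\Bbb{C}))\to\pi_3(\Bbb{CP}^1)$ is multiplication by $1$, certainly nonzero, giving the claim.

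For the pre-composition action the same reduction applies but the roles reverse: evaluating at $\infty$, the orbit map $\alpha\mapsto f_0\circ\alpha^{-1}\big|_{z=\infty}$ pulls back along ${\rm{SU}}_2\to{\rm{PSU}}_2$ to the composite of the Hopf fibration \eqref{auxiliary6} (up to the degree $-1$ inversion on the odd-dimensional group ${\rm{SU}}_2$) with the degree $d$ map $f_0:\Bbb{CP}^1\to\Bbb{CP}^1$, which by \cite[chap. 4.B]{MR1867354} is multiplication by $d^2$ on $\pi_3$. The induced map on $\pi_3$ is therefore multiplication by $\pm d^2$, again nonzero, so the orbit inclusion is a rational isomorphism and the rows of \eqref{diagram} consist of isomorphisms. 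The main thing to get right — and the only real departure from Lemma \ref{main lemma} — is bookkeeping: confirming that the evaluation trick isolates a single Hopf-fibration factor for each action and that the resulting multiplier ($1$ for post-composition, $\pm d^2$ for pre-composition) is nonvanishing; there is no cancellation of the kind $1-d^2$ that appeared in the conjugation case, so both statements in fact hold without needing the $d=2,3$ iteration workaround, the hypothesis $d>3$ being used only to guarantee \eqref{auxiliary2}.
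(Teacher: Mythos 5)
Your proof is correct and takes essentially the same route as the paper, whose own proof simply notes that the argument for Lemma \ref{main lemma} carries over mutatis mutandis with the multiplier in \eqref{auxiliary8} recomputed for each action. One small point in your favor: you attach the multiplier $1$ (an isomorphism) to post-composition and $\pm d^2$ to pre-composition, which is the correct assignment (it matches the evaluation-map computation $\alpha\mapsto\alpha\left(f_0(\infty)\right)$ used later in the proof of Theorem \ref{main finite}), whereas the paper's one-line proof lists the two multipliers ``respectively'' in the transposed order.
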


\begin{proof}
The proof of Lemma \ref{main lemma} carries over mutatis mutandis. The only difference is the calculation \eqref{auxiliary8} of the map $\pi_3({\rm{PSL}}_2(\Bbb{C}))\rightarrow\pi_3({\rm{Rat}}_d)$ which in these cases would be multiplication by $-d^2$ or an isomorphism respectively.
\end{proof}

\begin{theorem}\label{main variant}
For any $d>1$ the spaces $\mathcal{M}_d^{\rm{pre}}$ and $\mathcal{M}_d^{\rm{post}}$ are rationally acyclic. 
\end{theorem}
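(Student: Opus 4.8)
The plan is to follow the proof of Theorem \ref{main 1} almost verbatim in the range $d>3$, and then to dispose of the two exceptional degrees $d=2,3$ by a composition trick that plays the role of the iteration map $f\mapsto f\circ f$ used in the conjugation setting.

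First, suppose $d>3$. Lemma \ref{main lemma variant} already supplies the isomorphisms in the bottom row of diagram \eqref{diagram}: the inclusion of a pre-composition (resp. post-composition) orbit induces an isomorphism $H_3({\rm{PSL}}_2(\Bbb{C});\Bbb{Q})\xrightarrow{\sim}H_3({\rm{Rat}}_d;\Bbb{Q})$. Dualizing, the fiber inclusion induces a surjection $H^3({\rm{Rat}}_d;\Bbb{Q})\twoheadrightarrow H^3({\rm{PSL}}_2(\Bbb{C});\Bbb{Q})$, and since both cohomology rings are concentrated in degrees $0$ and $3$ (Proposition \ref{Rat homology}), this surjection is an isomorphism of graded vector spaces. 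I would then feed this into the generalized Leray--Hirsch theorem \cite[Theorem 2]{MR2078574}, applied to the bundle \eqref{fibration1'} (a genuine fiber bundle, since post-composition is free) and to the orbifold fiber bundle \eqref{fibration1''}, to obtain
\[
H^*({\rm{Rat}}_d;\Bbb{Q})\cong H^*\bigl(\mathcal{M}_d^{\rm{pre}/\rm{post}};\Bbb{Q}\bigr)\otimes H^*({\rm{PSL}}_2(\Bbb{C});\Bbb{Q}).
\]
Comparing Poincar\'e series forces the tensor factor $H^*(\mathcal{M}_d^{\rm{pre}/\rm{post}};\Bbb{Q})$ to vanish in positive degrees, which is the asserted rational acyclicity for $d>3$.

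The genuinely new point is the case $d\in\{2,3\}$, where Theorem \ref{Rat homotopy} --- and hence Lemma \ref{main lemma variant} --- is unavailable. Here the self-iteration map $f\mapsto f\circ f$ from the conjugation proof is useless, because iteration fails to be equivariant for pre- or post-composition. Instead, I would fix an auxiliary rational map $h$ of some degree $e\geq 2$ and exploit that one-sided composition commutes with the relevant action. For post-composition the assignment $f\mapsto f\circ h$ defines a continuous ${\rm{PSL}}_2(\Bbb{C})$-equivariant map ${\rm{Rat}}_d\to{\rm{Rat}}_{de}$, since $(\alpha\circ f)\circ h=\alpha\circ(f\circ h)$; for pre-composition the assignment $f\mapsto h\circ f$ gives an equivariant map ${\rm{Rat}}_d\to{\rm{Rat}}_{ed}$, since $h\circ(f\circ\alpha^{-1})=(h\circ f)\circ\alpha^{-1}$. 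In either case $de>3$, so these maps fit into a commutative triangle of orbit inclusions
\[
\xymatrix{
{\rm{PSL}}_2(\Bbb{C})\ar[dr]\ar[r]&{\rm{Rat}}_d\ar[d]\\
&{\rm{Rat}}_{de}
}
\]
in which the diagonal induces an isomorphism on $H_3(\,\cdot\,;\Bbb{Q})$ by the already-established range. Because every space appearing has one-dimensional third rational homology (Proposition \ref{Rat homology}), the horizontal arrow $H_3({\rm{PSL}}_2(\Bbb{C});\Bbb{Q})\to H_3({\rm{Rat}}_d;\Bbb{Q})$ is injective and hence an isomorphism, exactly as in the final step of the proof of Lemma \ref{main lemma}. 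Leray--Hirsch then applies as before, completing the degrees $d=2,3$.

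The main obstacle I anticipate is precisely this low-degree bookkeeping: one must verify that the auxiliary composition map is degree-multiplicative (so that $f\circ h$ and $h\circ f$ genuinely land in ${\rm{Rat}}_{de}$) and that it carries the chosen orbit to an orbit compatibly, so that the triangle of orbit inclusions really commutes. Everything else is a formal transcription of the conjugation argument, the only conceptual change being the replacement of self-iteration by one-sided composition with a fixed map, which is what restores equivariance for the pre- and post-composition actions.
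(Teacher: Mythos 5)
Your proposal is correct and follows essentially the same route as the paper: Lemma \ref{main lemma variant} plus the (generalized) Leray--Hirsch theorem for $d>3$, and for $d\in\{2,3\}$ the paper likewise replaces self-iteration by the equivariant one-sided composition maps ${\rm{Rat}}_d\xrightarrow{f\mapsto g\circ f}{\rm{Rat}}_{kd}$ and ${\rm{Rat}}_d\xrightarrow{f\mapsto f\circ g}{\rm{Rat}}_{kd}$ with $g$ a fixed map of degree $k\geq 2$, exactly as you do. Your equivariance checks and the one-dimensionality argument forcing the horizontal $H_3$ map to be an isomorphism match the paper's (more tersely stated) reasoning.
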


\begin{proof}
The idea once again is to apply the Leray-Hirsch theorem (for fiber bundles or orbifold fiber bundles) to \eqref{fibration1'} or \eqref{fibration1''}. It suffices to verify Lemma \ref{main lemma} for the inclusion of orbits $\left\{\alpha\circ f_0\mid\alpha\in {\rm{PSL}}_2(\Bbb{C})\right\}$ or $\left\{f_0\circ\alpha\mid\alpha\in {\rm{PSL}}_2(\Bbb{C})\right\}$ in ${\rm{Rat}}_d$. 
Lemma \ref{main lemma variant} establishes this for $d>3$. Analogously to the end of the proof of Lemma \ref{main lemma}, we use certain equivariant maps to construct isomorphisms for $d=2$ and $d=3$ as well. 
One could use maps of the from 
$${\rm{Rat}}_d\xrightarrow{{f\mapsto g\circ f}}{\rm{Rat}}_{kd}\quad\text{ or }\quad{\rm{Rat}}_d\xrightarrow{{f\mapsto f\circ g}}{\rm{Rat}}_{kd}$$  where $g$ is a fixed rational map of degree $k\geq 2$. These are equivariant for the pre-composition and post-composition actions respectively.
\end{proof}
\begin{remark}\label{claudio}
Lemma \ref{main lemma variant} was independently established in greater generality by Claudio G\'omez-Gonz\'ales for post-composition. 
See \cite{MR4140094} for  the corresponding statement for the space of holomorphic maps of degree $d$ from $\mathbb{CP}^n$ to $\mathbb{CP}^m$ and some generalizations of Theorem \ref{main variant} in higher dimensions.
\end{remark}

\begin{example}
The moduli space $\mathcal{M}_2^{\rm{pre}}$ can be  described  explicitly. The two critical points of an arbitrary quadratic rational map $f$ may be assumed to be $0$ and $\infty$ after pre-composing with an appropriate M\"obius transformation. The map can then be written as 
$f(z)=\frac{rz^2+s}{tz^2+u}$
whose coefficients come from a unimodular matrix 
$$\begin{bmatrix}
r&s\\
t&u
\end{bmatrix}.$$
As such, $\mathcal{M}_2^{\rm{pre}}$ may be thought of as the space of all such rational maps modulo the subgroup 
$$
\left\{z\mapsto\lambda z^{\pm 1}\mid\lambda\in\Bbb{C}^*\right\}\cong\left\{z\mapsto\lambda z\mid\lambda\in\Bbb{C}^* \right\}\rtimes\left\langle z\mapsto\frac{1}{z}\right\rangle 
$$
of M\"obius transformations that preserve $\{0,\infty\}$. Pre-composing with $z\mapsto\lambda z$ or $z\mapsto\frac{1}{z}$ amounts to the following transformations of unimodular matrices respectively:
$$
\begin{bmatrix}
r&s\\
t&u
\end{bmatrix}\mapsto
\begin{bmatrix}
\lambda r&\frac{s}{\lambda}\\
\lambda t&\frac{u}{\lambda}
\end{bmatrix},\quad
\begin{bmatrix}
r&s\\
t&u
\end{bmatrix}\mapsto
\begin{bmatrix}
{\rm{i}}s&{\rm{i}}r\\
{\rm{i}}u&{\rm{i}}t
\end{bmatrix}.
$$
The invariants of the first action are given by $a:=rs$, $b:=tu$ and $c:=ru=st+1$ which are related by  $ab=c(c-1)$. The second transformation above changes them via 
$a\mapsto -a$, $b\mapsto -b$ and $c\mapsto 1-c$. Hence, $\mathcal{M}_2^{\rm{pre}}$ can be identified with the quotient of the affine surface 
$$
V:=\left\{(a,b,c)\in\Bbb{C}^3\,\big|\, ab=c(c-1)\right\}
$$
by the involution
$(a,b,c)\mapsto (-a,-b,1-c)$. After the affine change of coordinates 
$$
(a',b',c'):=\left({\rm{i}}(a+b),a-b,2c-1\right),
$$
this translates to the involution $(a',b',c')\mapsto(-a',-b',-c')$
acting on 
$$
V':=\left\{(a',b',c')\in\Bbb{C}^3\,\big|\, a^{\prime\,2}+b^{\prime\,2}+c^{\prime\,2}=1\right\}.
$$
It is not hard to see that $V'$ is homeomorphic to the tangent bundle of the unit sphere in $\Bbb{R}^3$. Its quotient by this involution is homeomorphic to the tangent bundle of $\Bbb{RP}^2$. We deduce that $\mathcal{M}_2^{\rm{pre}}$ is of the homotopy type of $\Bbb{RP}^2$ and that it is rationally acyclic while $H^*(\mathcal{M}_2^{\rm{pre}};\Bbb{Z}/2\Bbb{Z})$ is non-trivial and 
$\pi_1\left(\mathcal{M}_2^{\rm{pre}}\right)\cong\Bbb{Z}/2\Bbb{Z}$ (cf. Proposition \ref{variant fundamental group}).
\end{example}

It should be mentioned at this point that the rational acyclicity of $\mathcal{M}_d^{\rm{post}}$ also appears in \cite[Theorem C]{MR1445557}. There, this space is shown to be homeomorphic to the \textit{space of projective equivalence classes of non-singular Toeplitz matrices of dimension $d$}. We record this fact for the later use. 

\begin{definition-theorem}[{\cite[Theorem A]{MR1445557}}]\label{Toeplitz}
Let $\mathcal{T}_d$ be the space of non-singular $d\times d$ complex Toeplitz matrices 
\begin{equation}\label{Toeplitz matrix}
\begin{bmatrix}
a_d&a_{d+1}&\dots&a_{2d-2}&a_{2d-1}\\
a_{d-1}&a_d&\dots&a_{2d-3}&a_{2d-2}\\
\vdots&\vdots&\ddots&\vdots&\vdots\\
a_2&a_3&\dots&a_d&a_{d+1}\\
a_1&a_2&\dots&a_{d-1}&a_d
\end{bmatrix}
\end{equation}
modulo the scaling action of $\Bbb{C}^*$. It is homeomorphic to the space $\mathcal{M}^{\rm{post}}_d$ of rational maps of degree $d$ modulo post-composition (cf. \eqref{variants}). 
\end{definition-theorem}
\begin{remark}
Milgram's construction of a homeomorphism $\mathcal{M}^{\rm{post}}_d\rightarrow\mathcal{T}_d$ works as follows: Consider a pair $\left(p(z),q(z)\right)$ of coprime polynomials with $\max\{\deg p, \deg q\}=d$. Form the quotient $\Bbb{C}[z]\big/z^{2d-1}\Bbb{C}[z]$ which is a vector space of dimension $2d-1$;  $\{\overline{1},\overline{z}\dots,\overline{z^{2d-2}}\}$ is an ordered basis for it. 
The elements of this space determined by polynomials 
$$
p(z),zp(z),\dots,z^{d-2}p(z)\quad \text{and}\quad q(z),zq(z),\dots,z^{d-2}q(z)
$$
are linearly independent since $p(z)$ and $q(z)$ are coprime. Identifying $\left(\Bbb{C}[z]\big/z^{2d-1}\Bbb{C}[z]\right)^*$ with $\Bbb{C}^{2d-1}$ via the ordered basis mentioned above, the dual of the subspace spanned by these $2d-2$ elements is one-dimensional. This yields a complex vector $(a_1,\dots,a_{2d-1})$ unique up to multiplication by a non-zero scalar. Notice that the subspace of $\Bbb{C}[z]\big/z^{2d-1}\Bbb{C}[z]$ constructed here does not change by
replacing $\left(p(z),q(z)\right)$ with $\left(rp(z)+sq(z),tp(z)+uq(z)\right)$ where 
$\det
\begin{bmatrix}
r&s\\
t&u
\end{bmatrix}\neq 0$. We thus have assigned a vector $(a_1,\dots,a_{2d-1})$, considered modulo the action of $\Bbb{C}^*$, to the class of the rational map $\frac{p(z)}{q(z)}$ in $\mathcal{M}^{\rm{post}}_d$. It turns out that the Toeplitz matrix \eqref{Toeplitz matrix} associated with this vector is non-singular, hence a map from  $\mathcal{M}^{\rm{post}}_d$ to the space $\mathcal{T}_d$ of projective classes of non-singular Toeplitz matrices; a map that can be proved to be a homeomorphism. See {\cite[\S1]{MR1445557}} for the details.
\end{remark}

Milgram \cite{MR1445557} also extensively analyzes the mod $p$ cohomology of $\mathcal{M}_d^{\rm{post}}\cong\mathcal{T}_d$. Similarly, we will relate the mod $p$ cohomology of the dynamical moduli space $\mathcal{M}_d$ to that of the space $\mathcal{T}_d$ in the next subsection.

\subsection{The cohomology of \texorpdfstring{$\mathcal{M}_d$}{Md} with finite coefficients}\label{finite homology}
The main goal of this subsection is to relate the homology/cohomology groups of $\mathcal{M}_d$ with coefficients in a finite field to those of the space $\mathcal{T}_d$ of Toeplitz matrices described above. Our reason for doing so is  to eventually show that $\mathcal{M}_d$ is not contractible for large enough $d$. Throughout this subsection, $p$ is a prime integer and $\Bbb{F}_p$ is the field with $p$ elements. We begin with the following:

\begin{theorem}\label{main finite}
If $d\geq 2$ is an integer and $p$ is a prime number not dividing $d^3-d$, then 
$H^*(\mathcal{M}_d;\Bbb{F}_p)$
and 
$H^*(\mathcal{T}_d;\Bbb{F}_p)$
are isomorphic as graded $\Bbb{F}_p$-algebras.
\end{theorem}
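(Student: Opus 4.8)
The plan is to relate the two spaces through their common connection to the moduli space $\mathcal{M}_d$ and its variant $\mathcal{M}_d^{\mathrm{post}}\cong\mathcal{T}_d$. The key structural input is that both quotients arise from free or almost-free actions of $\mathrm{PSL}_2(\Bbb{C})$ on $\mathrm{Rat}_d$: the post-composition action is genuinely free, giving the honest fiber bundle \eqref{fibration1'}, whereas the conjugation action has only finite stabilizers. The hypothesis that $p\nmid d^3-d=d(d-1)(d+1)$ is precisely what is needed to ensure that $p$ does not divide the orders of the finite stabilizer subgroups $\mathrm{Aut}(f)$ appearing in the conjugation action, so that mod-$p$ cohomology behaves as if the action were free.

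First I would make the stabilizer bound precise: I would argue that any nontrivial $\mathrm{Aut}(f)$ is a finite subgroup of $\mathrm{PSL}_2(\Bbb{C})$ (cyclic, dihedral, or polyhedral) whose order is constrained by the degree $d$, and that when $p\nmid d^3-d$ the prime $p$ cannot divide $|\mathrm{Aut}(f)|$ for any $f\in\mathrm{Rat}_d$. This is the technical heart of the argument and the step I expect to be the main obstacle, since it requires a careful case analysis of which finite subgroups of $\mathrm{PSL}_2(\Bbb{C})$ can occur as automorphism groups of degree $d$ rational maps together with the resulting divisibility constraints on their orders. Given this, the Borel construction argument sketched in the earlier remark applies: the natural map $\mathrm{Rat}_d\hcoker\mathrm{PSL}_2(\Bbb{C})\to\mathcal{M}_d$ is an isomorphism on $\Bbb{F}_p$-cohomology, because the higher direct images $R^n\pi_*\Bbb{F}_p$ vanish for $n>0$ once $p$ avoids the stabilizer orders (the fibers are classifying spaces $\mathrm{BStab}(f)$ with $\Bbb{F}_p$-acyclic reduced cohomology).

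Next I would exploit that both $\mathcal{M}_d$ and $\mathcal{M}_d^{\mathrm{post}}$ are computed, mod $p$, by the homotopy quotients $\mathrm{Rat}_d\hcoker\mathrm{PSL}_2(\Bbb{C})$ for the conjugation and post-composition actions respectively. The point is that these two homotopy quotients are $\Bbb{F}_p$-cohomology isomorphic: I would compare the two Serre (or Leray) spectral sequences of the fibrations over $\mathrm{BPSL}_2(\Bbb{C})$, using that the $\mathrm{PSL}_2(\Bbb{C})$-equivariant $\Bbb{F}_p$-cohomology of $\mathrm{Rat}_d$ depends only on the homotopy class of the action through its restriction to a maximal compact $\mathrm{PSU}_2$ (equivalently, through the induced map to $\mathrm{BPSL}_2(\Bbb{C})$ on classifying spaces). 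Concretely, since $\mathrm{Rat}_d$ is connected and $\mathrm{PSL}_2(\Bbb{C})$ is connected, the two actions induce the same map on the relevant cohomology of $\mathrm{BPSL}_2(\Bbb{C})$ up to the difference computed in Lemma \ref{main lemma variant} (multiplication by $1-d^2$ versus $-d^2$ on $\pi_3$), and these scalars are units mod $p$ under our hypothesis, so the comparison yields an isomorphism of the $E_2$-pages compatible with differentials.

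Finally I would assemble these into the stated algebra isomorphism. Combining the $\Bbb{F}_p$-cohomology isomorphism $H^*(\mathcal{M}_d;\Bbb{F}_p)\cong H^*(\mathrm{Rat}_d\hcoker\mathrm{PSL}_2(\Bbb{C});\Bbb{F}_p)$ for the conjugation action, the parallel isomorphism for post-composition, the honest fiber bundle \eqref{fibration1'} identifying $H^*(\mathcal{M}_d^{\mathrm{post}};\Bbb{F}_p)$ with the post-composition homotopy quotient, and Definition-Theorem \ref{Toeplitz} identifying $\mathcal{M}_d^{\mathrm{post}}\cong\mathcal{T}_d$, I would obtain a chain of isomorphisms $H^*(\mathcal{M}_d;\Bbb{F}_p)\cong H^*(\mathcal{T}_d;\Bbb{F}_p)$. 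Since every map in the chain is induced by a genuine continuous map of spaces (the Borel-construction projections and the homeomorphism of Definition-Theorem \ref{Toeplitz}) or by a comparison of multiplicative spectral sequences, each respects cup products, so the resulting isomorphism is one of graded $\Bbb{F}_p$-algebras. The only genuine arithmetic input throughout is that $p\nmid d^3-d$, which simultaneously kills the stabilizer torsion and makes the degree-$d^2$ scalars invertible.
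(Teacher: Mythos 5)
Your first two steps are sound and agree with the paper's setup: the prime factors of every stabilizer order of the conjugation action divide $d^3-d$ (cyclic and dihedral stabilizers have parameter $m$ dividing one of $d-1,d,d+1$, the tetrahedral/octahedral groups only involve the primes $2,3$, and $A_5$ occurs only when $5\mid d^2-1$; this is the case analysis from \cite{MR3709645}), and once $p$ avoids these orders the natural map ${\rm{Rat}}_d\hcoker{\rm{PSL}}_2(\Bbb{C})\to\mathcal{M}_d$ is an $\Bbb{F}_p$-cohomology isomorphism. The genuine gap is your third step, the comparison of the conjugation and post-composition Borel constructions. These are two fibrations over ${\rm{BPSL}}_2(\Bbb{C})$ with the same fiber, but there is no map of spaces or of fibrations between them -- the identity of ${\rm{Rat}}_d$ is not equivariant for the two actions -- so there is no comparison map of Serre spectral sequences. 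You would instead have to produce an \emph{abstract} isomorphism of spectral sequences, which requires controlling the differentials on all of $H^*({\rm{Rat}}_d;\Bbb{F}_p)$, not just on the degree-three class: mod $p$ this ring is large (it contains the braid-group-type classes of ${\rm{Rat}}^*_d$ via the evaluation fibration), and knowing that the transgressions of the degree-three class differ by the units $1-d^2$ and $-d^2$ says nothing about the higher classes. The principle you invoke -- that equivariant $\Bbb{F}_p$-cohomology depends only on the restriction to ${\rm{PSU}}_2$ or on an ``induced map to ${\rm{BPSL}}_2(\Bbb{C})$'' -- is not a theorem; different actions of the same group on the same space generally have non-equivalent Borel constructions. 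Finally, even granting an abstract isomorphism of $E_\infty$-pages, the abutments need not be isomorphic as algebras: with no map of spaces inducing the identification you face both the filtration-extension problem and the fact that $E_\infty$ only records the associated graded of the cup product.

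The paper avoids exactly this difficulty by running the comparison through $H^*({\rm{Rat}}_d;\Bbb{F}_p)$ itself rather than over ${\rm{BPSL}}_2(\Bbb{C})$: it applies the (orbifold) Leray--Hirsch theorem to the bundles ${\rm{PSL}}_2(\Bbb{C})\to{\rm{Rat}}_d\to\mathcal{M}_d$ and ${\rm{PSL}}_2(\Bbb{C})\to{\rm{Rat}}_d\to\mathcal{M}_d^{\rm{post}}$, the needed input being precisely that the two orbit inclusions are surjective on $H^3(-;\Bbb{F}_p)$; this is proved by a mod $p$ Hurewicz argument on $\widetilde{\rm{Rat}}_d$, a transfer for the $2d$-sheeted cover (whence the condition $p\nmid 2d$), and the unit computations $1-d^2$ and $-d^2$ that you correctly identified. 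This realizes both pullbacks $\pi^*$ and $\pi^{{\rm{post}}\,*}$ as injective ring maps into the \emph{single} ring $H^*({\rm{Rat}}_d;\Bbb{F}_p)$, whose images are complements of one and the same ideal ${\rm{I}}=H^*({\rm{Rat}}_d;\Bbb{F}_p)\smile a$, because the classes $a$ and $a^{\rm{post}}$ span the same line in the one-dimensional space $H^3({\rm{Rat}}_d;\Bbb{F}_p)$; hence both algebras are identified with $H^*({\rm{Rat}}_d;\Bbb{F}_p)\big/{\rm{I}}$ by ring homomorphisms. That quotient trick is what supplies the multiplicativity your chain of isomorphisms is missing; if you want to keep the Borel-construction framing, you still need some version of it (or of Leray--Hirsch for the dual bundles) to close the argument.
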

To motivate this theorem, it must be mentioned that $H^*(\mathcal{T}_d;\Bbb{F}_p)$ has a rich structure. In fact, each orbit in the post-composition action of ${\rm{PSL}}_2(\Bbb{C})$ on 
${\rm{Rat}}_d$ contains  based maps satisfying $\infty\mapsto 1$. The quotient space $\mathcal{M}_d^{\rm{post}}$ may thus be identified with the based parameter space
${\rm{Rat}}^*_d$ modulo the free action of the subgroup of M\"obius transformations fixing $1\in\Bbb{CP}^1$. The aforementioned subgroup is of the homotopy type of $S^1$. Therefore, 
$\mathcal{M}_d^{\rm{post}}$ is homotopy equivalent to ${\rm{Rat}}^*_d/S^1$. The post-composition action is free and thus ${\rm{Rat}}^*_d/S^1$ is homotopy equivalent to the corresponding homotopy quotient. Thus, by the Borel construction, it fits into a fibration 
\begin{equation}\label{fibration6}
{\rm{Rat}}^*_d\rightarrow{\rm{Rat}}^*_d/S^1\rightarrow {\rm{B}}S^1=\Bbb{CP}^\infty.
\end{equation}
This amounts to a spectral sequence %converging to $H^*(\mathcal{T}_d;\Bbb{F}_p)$:
$$
E^{i,j}_2=H^i\left(\Bbb{CP}^\infty;H^j\left({\rm{Rat}}^*_d;\Bbb{F}_p\right)\right)
\cong H^i(\Bbb{CP}^\infty;\Bbb{Z})\otimes H^j({\rm{Rat}}^*_d;\Bbb{F}_p)\Rightarrow H^{i+j}(\mathcal{T}_d;\Bbb{F}_p)
$$
which is analyzed in \cite{MR1445557}. On the other hand, the mod $p$ cohomology groups of ${\rm{Rat}}^*_d$ are studied in \cite{MRTotaro,MR1097023,MR1318152}.

Our proof of Theorem \ref{main finite} mimics the proofs presented for Lemma \ref{main lemma} and Theorem \ref{main 1}  in \S\ref{rational homology}. The rational homology or cohomology and also the homotopy groups therein must be replaced by their mod $p$ analogues and a mod $p$ version of the generalized Leray-Hirsch theorem must be applied to the orbifold fiber bundle \eqref{fibration5}. These steps require us to exclude some prime characteristics: 
\begin{itemize}
\item The argument in \cite[\S 2]{MR2078574} for a Leray-Hirsch theorem applicable to the rational homology of orbifold fiber bundles carries over to the mod $p$ setting provided that $p$ does not divide the order of stabilizers. Any stabilizer of the conjugation action ${\rm{PSL}}_2(\Bbb{C})\curvearrowright{\rm{Rat}}_d$ is a finite M\"obius group and thus is either cyclic, dihedral or one of the symmetry groups of platonic solids of orders $12$, $24$ and $60$. The dimension of the locus in $\mathcal{M}_d$ corresponding to each stabilizer type is known: There exists a cyclic stabilizer of order $m$ or a dihedral stabilizer of order $2m$ only if $m$ divides $d-1$, $d$ or $d+1$ \cite[Proposition 2.7 and Proposition 2.12]{MR3709645}; and a stabilizer of order $60$ (the group $A_5$) occurs only if $d\equiv\pm 1\pmod{5}$ \cite[Theorem 3.16]{MR3709645}. The integer $(d-1)d(d+1)=d^3-d$ is always divisible by $2$ and $3$ (the prime divisors of $A_4$ and $S_4$) and moreover by $5$ when $d\equiv\pm 1\pmod{5}$. Consequently, the integer $d^3-d$ is divisible by  prime divisors of all stabilizers involved.  
\item In analogy with $H^*({\rm{PSL}}_2(\Bbb{C});\Bbb{Q})$, we want the graded vector space $H^*({\rm{PSL}}_2(\Bbb{C});\Bbb{F}_p)$ to be of dimension one in degrees zero and three and zero-dimensional otherwise. Thus, $p=2$ must be thrown out. 
\item Just like the proof of Lemma \ref{main lemma}, we shall apply a version of the Hurewicz theorem for which we need $\pi_2(\widetilde{\rm{Rat}}_d)\otimes\Bbb{F}_p$ to be trivial and
$\pi_3(\widetilde{\rm{Rat}}_d)\otimes\Bbb{F}_p$ to be one-dimensional. Given \eqref{auxiliary2}, $p=2$ is the only problematic characteristic here.  
\item As in diagram \eqref{diagram}, we want the universal covering $\widetilde{\rm{Rat}}_d\rightarrow{\rm{Rat}}_d$ to induce an epimorphism 
$$
H_3(\widetilde{\rm{Rat}}_d;\Bbb{F}_p)\twoheadrightarrow H_3({\rm{Rat}}_d;\Bbb{F}_p).
$$
The degree of the covering is $2d$ (Proposition \ref{Rat fundamental}) and we should have $p\nmid 2d$ so that the transfer homomorphism provides us with a section. Hence, we want $p$ not to divide $2d$. 
\end{itemize}
All in all, the prime divisors of $d^3-d$ are the characteristics that we need to disregard in our proof.

\begin{proof}[Proof of Theorem \ref{main finite}]
We first claim that 
\begin{equation}\label{auxiliary14}
\begin{split}
&H^*({\rm{Rat}}_d;\Bbb{F}_p)\cong H^*(\mathcal{M}_d;\Bbb{F}_p)\otimes H^*({\rm{PSL}}_2(\Bbb{C});\Bbb{F}_p),\textit{ and}\\
&H^*({\rm{Rat}}_d;\Bbb{F}_p)\cong H^*(\mathcal{M}^{\rm{post}}_d;\Bbb{F}_p)\otimes H^*({\rm{PSL}}_2(\Bbb{C});\Bbb{F}_p)     
\end{split}    
\end{equation}
for any $d\geq 2$. This will be achieved by applying the Leray-Hirsch theorem (generalized or usual) to \eqref{fibration5} and \eqref{fibration1'}. It suffices to show that 
the linear maps 
\begin{equation}\label{auxiliary15}
\Bbb{F}_p\cong H_3({\rm{PSL}}_2(\Bbb{C});\Bbb{F}_p)\rightarrow H_3({\rm{Rat}}_d;\Bbb{F}_p)
\end{equation}
induced by orbit inclusions are injective. As in the proof of Lemma \ref{main lemma}, it is no loss of generality to take $d$ to be larger than three. The description \eqref{auxiliary2} of the first few homotopy groups of $\widetilde{{\rm{Rat}}}_d$ then implies that 
$$
\pi_2(\widetilde{\rm{Rat}}_d)\otimes\Bbb{F}_p=0 \quad\text{and}\quad \pi_3(\widetilde{\rm{Rat}}_d)\otimes\Bbb{F}_p\cong\Bbb{F}_p.
$$
We now invoke a mod $p$ version of the Hurewicz theorem as presented in \cite[Theorem 9.7]{MR2739026}. The space $\widetilde{\rm{Rat}}_d$ is simply connected with 
$\pi_2(\widetilde{\rm{Rat}}_d)\otimes\Bbb{F}_p=0$. As such,  
$H_2(\widetilde{\rm{Rat}}_d;\Bbb{F}_p)$ is trivial and, more importantly, the Hurewicz homomorphism
$$
\Bbb{F}_p\cong\pi_3(\widetilde{\rm{Rat}}_d)\otimes\Bbb{F}_p\rightarrow H_3(\widetilde{\rm{Rat}}_d;\Bbb{F}_p)
$$
is an isomorphism.\footnote{We do not need to define  homotopy groups with coefficients here since, as is proved in \cite[\S2]{MR2739026}, there is a universal coefficient sequence which implies $\pi_3(X;\Bbb{F}_p)\cong \pi_3(X)\otimes \Bbb{F}_p$ once $X$ is simply connected and $\pi_2(X)\otimes\Bbb{F}_p=0$.} Now consider the commutative diagram below which is the mod $p$ version of diagram \eqref{diagram}:
\begin{equation}\label{diagram'}
\xymatrixcolsep{5pc}\xymatrix{\pi_3({\rm{SL}}_2(\Bbb{C}))\otimes\Bbb{F}_p \ar[d]^{\sim}_{\mod p\text{ Hurewicz}}\ar[r] 
& \pi_3(\widetilde{\rm{Rat}}_d)\otimes\Bbb{F}_p \ar[d]^{\mod p\text{ Hurewicz}}_{\sim}\\
H_3({\rm{SL}}_2(\Bbb{C});\Bbb{F}_p) \ar[d]^{\sim} \ar[r]& H_3(\widetilde{\rm{Rat}}_d;\Bbb{F}_p) \ar@{->>}[d]\\
H_3({\rm{PSL}}_2(\Bbb{C});\Bbb{F}_p)\ar[r]& H_3({\rm{Rat}}_d;\Bbb{F}_p)\ar@/_/[u]_{\text{transfer}}
}
\end{equation} 
We know that all $\Bbb{F}_p$-vector spaces appearing here are one-dimensional except for $H_3({\rm{Rat}}_d;\Bbb{F}_p)$. This space is one-dimensional as well due to the epimorphism above and the fact that $H_3({\rm{Rat}}_d;\Bbb{F}_p)$ is non-trivial because $b_3({\rm{Rat}}_d)>0$ according to Proposition \ref{Rat homology}. The diagram indicates that, in order to establish the injectivity (and thus bijectivity) of homomorphisms \eqref{auxiliary15}, it suffices to show that the orbit inclusions for either conjugation or post-composition actions give rise to isomorphisms  
$$
\Bbb{F}_p\cong\pi_3({\rm{PSL}}_2(\Bbb{C}))\otimes\Bbb{F}_p\rightarrow\pi_3({\rm{Rat}}_d)\otimes\Bbb{F}_p\cong\Bbb{F}_p.
$$
To this end, we need to recall calculations carried out in the proof of Lemma \ref{main lemma}. Picking an arbitrary $f_0\in{\rm{Rat}}_d$, we proved that the homomorphism
\begin{equation}\label{auxiliary16}
\stackrel{\cong\Bbb{Z}}{\pi_3({\rm{PSL}}_2(\Bbb{C}))}\rightarrow
\stackrel{\cong \Bbb{Z}/2\Bbb{Z}\oplus\Bbb{Z}}{\pi_3({\rm{Rat}}_d)}\rightarrow
\stackrel{\cong\Bbb{Z}}{\pi_3(\Bbb{CP}^1)}
\end{equation}
induced by
$$
{\rm{PSL}}_2(\Bbb{C})\xrightarrow{\text{inclusion of the conjugacy class of }f_0}{\rm{Rat}}_d\xrightarrow{\text{evaluation at }\infty}\Bbb{CP}^1
$$
is the multiplication map by $1-d^2$ (see \eqref{auxiliary8}). However, the prime divisors of $2(d^2-1)$ have been ruled out as the characteristic and this means tensoring \eqref{auxiliary16} with $\Bbb{F}_p$ yields an isomorphism. Establishing that \eqref{auxiliary16} is an isomorphism in the case of the post-composition action is even easier: \eqref{auxiliary16} would correspond to the  evaluation map  
$$\begin{cases}
{\rm{PSL}}_2(\Bbb{C})\rightarrow\Bbb{CP}^1\\
\alpha\mapsto\alpha\left(f_0(\infty)\right)
\end{cases}
$$
which induces an isomorphism on $\pi_3$.\\
\indent
At this point, we have established the desired isomorphisms in \eqref{auxiliary14} at the level of graded $\Bbb{F}_p$-vector spaces. Nevertheless, the simple structure of the ring $H^*({\rm{PSL}}_2(\Bbb{C});\Bbb{F}_p)$ allows us to establish an algebra isomorphism between $H^*(\mathcal{M}_d;\Bbb{F}_p)$ and $H^*(\mathcal{M}^{\rm{post}}_d;\Bbb{F}_p)$ by an ad-hoc method. Denote the quotient maps for conjugation and post-composition actions by $$\pi:{\rm{Rat}}_d\rightarrow\mathcal{M}_d\text{, and }\pi^{\rm{post}}:{\rm{Rat}}_d\rightarrow\mathcal{M}^{\rm{post}}_d$$ respectively. To write down Leray-Hirsch isomorphisms, one needs to take the preimage of a generator $c$ of $H^3({\rm{PSL}}_2(\Bbb{C});\Bbb{F}_p)$ under the pullback maps $H^3({\rm{Rat}}_d;\Bbb{F}_p)\stackrel{\sim}{\rightarrow} H^3({\rm{PSL}}_2(\Bbb{C});\Bbb{F}_p)$ induced by different orbit inclusions. Here, all vector spaces involved are one-dimensional and so these preimages -- denoted by $a,a^{\rm{post}}$ -- are scalar multiples of each other. The isomorphisms \eqref{auxiliary14} of graded $\Bbb{F}_p$-vector spaces may now be described as 
\begin{equation*}
\begin{split}
&\begin{cases}
H^*(\mathcal{M}_d;\Bbb{F}_p)\otimes(\Bbb{F}_p.1\oplus\Bbb{F}_p.c)\rightarrow H^*({\rm{Rat}}_d;\Bbb{F}_p)\\
b\otimes 1\mapsto\pi^*(b),\, b\otimes c\mapsto\pi^*(b)\smile a; 
\end{cases}\\
&\begin{cases}
H^*(\mathcal{M}^{\rm{post}}_d;\Bbb{F}_p)\otimes(\Bbb{F}_p.1\oplus\Bbb{F}_p.c)\rightarrow H^*({\rm{Rat}}_d;\Bbb{F}_p)\\
b\otimes 1\mapsto\pi^{{\rm{post}}\,*}(b),\, b\otimes c\mapsto\pi^{{\rm{post}}\,*}(b)\smile a^{\rm{post}}. 
\end{cases}
\end{split}
\end{equation*}
Taking quotients modulo the subspace $H^*(\mathcal{M}^{\rm{post}}_d;\Bbb{F}_p)\otimes c$ on the left and the two-sided ideal 
$${\rm{I}}:=H^*({\rm{Rat}}_d;\Bbb{F}_p)\smile a=H^*({\rm{Rat}}_d;\Bbb{F}_p)\smile a^{\rm{post}}$$
of $H^*({\rm{Rat}}_d;\Bbb{F}_p)$ on the right yields the following graded $\Bbb{F}_p$-linear isomorphisms
\begin{equation*}
\begin{split}
&\begin{cases}
H^*(\mathcal{M}_d;\Bbb{F}_p)\rightarrow H^*({\rm{Rat}}_d;\Bbb{F}_p)\big/{\rm{I}}\\
b\mapsto\pi^*(b)\mod {\rm{I}}; 
\end{cases}\\
&\begin{cases}
H^*(\mathcal{M}^{\rm{post}}_d;\Bbb{F}_p)\rightarrow H^*({\rm{Rat}}_d;\Bbb{F}_p)\big/{\rm{I}}\\
b\mapsto\pi^{{\rm{post}}\,*}(b)\mod {\rm{I}}.
\end{cases}
\end{split}
\end{equation*}
Since these clearly preserve the ring structure (as any pullback map does), we conclude that both $\Bbb{F}_p$-algebras $H^*(\mathcal{M}_d;\Bbb{F}_p)$ and $H^*(\mathcal{M}^{\rm{post}}_d;\Bbb{F}_p)$ are isomorphic to $H^*({\rm{Rat}}_d;\Bbb{F}_p)\big/{\rm{I}}$ and thus isomorphic to each other. Finally, the fact that $\mathcal{M}^{\rm{post}}_d$ is homeomorphic to $\mathcal{T}_d$ (Theorem \ref{Toeplitz}) completes the proof.
\end{proof}

\begin{remark}
As mentioned before, $\mathcal{M}^{\rm{post}}_d$ is homotopy equivalent to ${\rm{Rat}}^*_d/S^1$. In this guise, the second isomorphism in $\eqref{auxiliary14}$ involving $H^*({\rm{Rat}}_d;\Bbb{F}_p)$ and  $H^*(\mathcal{M}^{\rm{post}}_d;\Bbb{F}_p)\cong H^*({\rm{Rat}}^*_d/S^1;\Bbb{F}_p)$   is also found in \cite[Proposition 1.2(a)]{MR1318152} using different notations and as an algebra isomorphism. 
\end{remark}

\begin{example}
The moduli space $\mathcal{M}_2$ is contractible \cite{MR1246482} and Theorem \ref{main finite} therefore implies that $\mathcal{T}_2$ is $\Bbb{F}_p$-acyclic for any prime characteristic $p\neq 2,3$. This can be confirmed with a direct calculation: $\mathcal{T}_2$ is the complement of the smooth quadric 
$$
C:=\left\{[a_1:a_2:a_3]\,\bigg|\, 
\det\begin{bmatrix}
a_2&a_3\\
a_1&a_2
\end{bmatrix}=0
\right\}
$$
in the projective plane $\Bbb{CP}^2$. By  the Poincar\'e-Lefschetz Duality 
$$
H_i(\mathcal{T}_2;\Bbb{F}_p)\cong H^{4-i}(\Bbb{CP}^2,C;\Bbb{F}_p);
$$
and the long exact sequence of cohomology groups for the pair $(\Bbb{CP}^2,C)$ then indicates that $H_i(\mathcal{T}_2;\Bbb{F}_p)$ is trivial for all $i>0$ unless $p=2$ in which case it is one-dimensional for $i=1,2$. 
\end{example}

\begin{corollary}\label{not contractible}
For any $d\geq 7$,  there is a prime  $p$ for which $H^{2p-2}(\mathcal{M}_d;\Bbb{F}_p)$ is non-trivial. 
\end{corollary}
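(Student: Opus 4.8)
The plan is to use Theorem \ref{main finite} to transport the question into Milgram's computation of the mod $p$ cohomology of the Toeplitz space $\mathcal{T}_d$. Concretely, I would first reduce the corollary to a combinatorial statement about primes together with one input from \cite{MR1445557}: it suffices to produce, for each $d\geq 7$, an odd prime $p$ with $5\leq p\leq d$ such that (i) $p\nmid d^3-d$ and (ii) $H^{2p-2}(\mathcal{T}_d;\Bbb{F}_p)\neq 0$. Indeed, once such a $p$ is found, condition (i) lets us invoke the graded algebra isomorphism $H^*(\mathcal{M}_d;\Bbb{F}_p)\cong H^*(\mathcal{T}_d;\Bbb{F}_p)$ of Theorem \ref{main finite} (recall $\mathcal{T}_d\cong\mathcal{M}^{\rm{post}}_d$ by Definition-Theorem \ref{Toeplitz}), and condition (ii) then forces $H^{2p-2}(\mathcal{M}_d;\Bbb{F}_p)\neq 0$, which is exactly the assertion.

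For step (ii) I would appeal directly to Milgram's analysis in \cite{MR1445557} of the Serre spectral sequence of the fibration \eqref{fibration6}, namely ${\rm{Rat}}^*_d\to\mathcal{T}_d\to{\rm{B}}S^1=\Bbb{CP}^\infty$. The point is that the first $p$-torsion class in the cohomology of ${\rm{Rat}}^*_d$---which, through the relation of ${\rm{Rat}}^*_d$ to configuration spaces and the braid group $B_d$, appears precisely when $d\geq p$ and lives in cohomological degree $2p-2$---persists in $\mathcal{T}_d$. Thus Milgram's explicit description yields a nonzero class in $H^{2p-2}(\mathcal{T}_d;\Bbb{F}_p)$ whenever $p$ is an odd prime with $p\leq d$; such a class automatically sits below the real dimension $4d-4$ of $\mathcal{T}_d$, so there is no degree obstruction.

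It then remains to carry out the number-theoretic step: for every $d\geq 7$ there is a prime $p$ with $5\leq p\leq d$ and $p\nmid(d-1)d(d+1)=d^3-d$. Suppose not, so that every prime in the interval $[5,d]$ divides $d^3-d$. Being a product of distinct prime divisors of $d^3-d$, the primorial $\prod_{5\leq p\leq d}p$ would then divide $d^3-d<(d+1)^3$. By Chebyshev's estimate $\prod_{p\leq n}p=e^{\theta(n)}$ with $\theta(n)\sim n$, this product grows exponentially and exceeds $(d+1)^3$ for all sufficiently large $d$, a contradiction; the finitely many small cases (beginning with $d=7$, where $p=5$ works because $5\nmid 336$) are verified directly. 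This produces the desired prime and completes the argument.

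The hard part is step (ii): extracting from Milgram's computation the precise nonvanishing of $H^{2p-2}(\mathcal{T}_d;\Bbb{F}_p)$ together with the sharp range $p\leq d$, since this is what pins down both the cohomological degree $2p-2$ in the statement and the threshold $d\geq 7$ (for $d\leq 6$ the only candidate primes in $[5,d]$ already divide $d^3-d$, so no valid $p$ exists). By contrast, the reduction via Theorem \ref{main finite} and the elementary primorial estimate are routine.
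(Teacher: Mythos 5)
Your proposal is correct, and its main cohomological reduction is essentially the paper's: both pass through Theorem \ref{main finite} to replace $\mathcal{M}_d$ by $\mathcal{T}_d\cong\mathcal{M}^{\rm{post}}_d$ (Definition-Theorem \ref{Toeplitz}), and both extract the nonvanishing of $H^{2p-2}(\mathcal{T}_d;\Bbb{F}_p)$ from Milgram's analysis of the fibration \eqref{fibration6}. Two caveats on that step. First, the stable model for ${\rm{Rat}}^*_d$ in \cite{MR1097023} is the braid group ${\rm{B}}_{2d}$ on $2d$ strands, not ${\rm{B}}_d$; your numerical criterion ($p$ odd, $p\leq d$, torsion first appearing in degree $2p-2$) is nevertheless the correct one, since $p\leq d$ is exactly $2p\leq 2d$. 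Second, your word ``persists'' is doing real work: the paper makes it precise via a dimension count in \eqref{Milgram} --- because $H^j({\rm{Rat}}^*_d;\Bbb{F}_p)=0$ for $1<j<2p-2$, the isomorphism $H^*({\rm{Rat}}^*_d;\Bbb{F}_p)\cong\Lambda_{\Bbb{Z}}[e_1]\otimes_{\Bbb{F}_p}E_\infty$ forces the degree $2p-3$ part of $E_\infty$ to vanish, so the degree $2p-2$ part of $E_\infty$ is all of $H^{2p-2}({\rm{Rat}}^*_d;\Bbb{F}_p)\neq 0$, and $E_\infty$ is the associated graded of $H^*(\mathcal{T}_d;\Bbb{F}_p)$. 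Where you genuinely diverge is the number-theoretic step. The paper observes that each of $d-1$, $d$, $d+1$ has gcd at most $4$ with $d-2$ and with $d-3$, so any prime factor $\geq 5$ of $(d-2)(d-3)$ serves as $p$; the exceptional case where both $d-2$ and $d-3$ have the form $2^m3^n$ reduces to the Diophantine equation $2^a-3^b=\pm1$, whose only solutions give $d=4,5,6,11$, and $d=11$ is settled by hand with $p=7$. Your route --- if no prime in $[5,d]$ works, then the squarefree product $\prod_{5\leq p\leq d}p$ divides $d^3-d$, contradicting the exponential growth of the primorial --- is also valid, but as written it is not effective: to turn ``all sufficiently large $d$'' plus ``finitely many small cases'' into an actual proof you need an explicit Chebyshev-type lower bound on $\theta(d)$ (or a direct induction: $5\cdot 7\cdot 11\cdot 13=5005>d^3-d$ for $13\leq d\leq 16$, and the inequality then propagates using Bertrand's postulate), after which only $7\leq d\leq 12$ remain to check by hand, and each admits a valid prime ($p=5$ for $d=7,8,12$ and $p=7$ for $d=9,10,11$). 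So: same topology, different arithmetic; the paper's arithmetic is elementary and self-contained, yours is conceptually quicker but imports a prime-counting estimate and needs an effectivity patch.
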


\begin{proof}
Invoking Theorem \ref{main finite}, it suffices to show that $H^{2p-2}(\mathcal{T}_d;\Bbb{F}_p)\neq 0$ for an appropriate prime $p$ that does not divide any of the integers $d-1$, $d$ or $d+1$. To see this, we use the work of Milgram on the mod $p$ cohomology of the space $\mathcal{T}_d\cong{\rm{Rat}}^*_d/S^1$ of Toeplitz matrices. Taking  $\Bbb{F}_p$  as the coefficient group with $p$ an odd prime not dividing $d$, the  Serre spectral sequence for the fibration \eqref{fibration6} -- which converges to  $H^*(\mathcal{T}_d;\Bbb{F}_p)$ -- degenerates at the $E_3$ page and results in 
\begin{equation}\label{Milgram}
H^*({\rm{Rat}}^*_d;\Bbb{F}_p)\cong \Lambda_{\Bbb{Z}}[e_1]\otimes_{\Bbb{F}_p} E_\infty
\end{equation}
where $E_\infty=E_3$   and $\Lambda_{\Bbb{Z}}[e_1]$ is the exterior algebra generated by a generator $e_1$ of 
$H^1({\rm{Rat}}^*_d;\Bbb{Z})\cong\Bbb{Z}$ \cite[Theorem 2.2]{MR1445557}.\footnote{Recall that $\pi_1({\rm{Rat}}^*_d)\cong\Bbb{Z}$; cf. Proposition \ref{Rat fundamental}. There is also a typo in the paper: in the statement of the theorem $\Bbb{Z}/2\Bbb{Z}$ must be replaced with $\Bbb{Z}/p\Bbb{Z}$; the correct statement can be found in \cite[Theorem 4.1]{MR1318152}.} The left hand side of \eqref{Milgram} is known: ${\rm{Rat}}^*_d$ is of the stable homotopy type of the Braid group ${\rm{B}}_{2d}$ on $2d$ strands \cite[Theorem 1.1]{MR1097023} whose  cohomology groups $H^*({\rm{B}}_{2d};\Bbb{F}_p)$ have been computed. For instance, for $p\leq d$  an odd prime, the first non-zero mod $p$ cohomology of the Braid group ${\rm{B}}_{2d}$ in dimensions larger than one happens in dimension $2p-2$ \cite[chap. III, Appendix]{MR0436146}. Now, by comparing the dimensions of degree $2p-2$ pieces of the different sides of  \eqref{Milgram} we deduce that $H^{2p-2}(\mathcal{T}_d;\Bbb{F}_p)$ is non-trivial. Consequently, to finish the proof we just need to show that if $d\geq 7$, there always exists a prime $2<p<d$ not dividing any of  $d-1$, $d$ or $d+1$. The gcd of each of these numbers with either $d-2$ or $d-3$ is at most $4$. Hence, any prime factor other than $2$ or $3$ of either $d-2$ or $d-3$ works as $p$. If there is no such a prime factor, both $d-2$ and $d-3$ must be in the form of $2^m3^n$. However, they differ by one, so 
$\{d-3,d-2\}=\left\{2^a,3^b\right\}$ where $2^a-3^b=\pm 1$. It is not hard to check that 
$$(a,b)=(1,0),(1,1),(2,1),(3,2)$$
are the only solutions of this Diophantine equation. They correspond to $d=4,5,6,11$ respectively. Only the last one matters here for which the existence of such a $p$ can be verified directly: $p=7$ is an odd prime less than $d=11$ that divides neither of $d-1$, $d$ and $d+1$.
\end{proof}

\begin{proof}[Proof of Theorem \ref{main 2.5}]
Follows immediately from Corollary \ref{not contractible}.
\end{proof}

\section{The fundamental group of \texorpdfstring{$\mathcal{M}_d$}{Md}}\label{homotopy groups}
The key ingredient in computing $\pi_1(\mathcal{M}_d)$ is the following proposition:
\begin{proposition}[A special case of {\cite[chap. I, Proposition 8.10]{MR1027600}}]\label{technical}
If $G$ is a Lie group acting properly on a simply connected manifold $X$, then there is an isomorphism 
$$
\pi_1(X/G)\cong \pi_0(G)/M
$$
where $M$ is the normal subgroup of $\pi_0(G)$ generated by $\left\{\pi_0({\rm{Stab}}(x))\right\}_{x\in X}$. 
\end{proposition}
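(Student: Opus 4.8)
The plan is to reduce the statement to two classical pieces by factoring the orbit map through the quotient by the identity component. Write $G_0$ for the identity component of $G$; it is a closed normal subgroup, so $\Gamma:=\pi_0(G)=G/G_0$ is a discrete group, and the orbit map factors as $X\to X/G_0\to (X/G_0)/\Gamma=X/G$. First I would treat the connected group $G_0$, then the discrete group $\Gamma$, and finally reconcile the two subgroups of $\pi_0(G)$ that appear. Set $Y:=X/G_0$ throughout.

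Step one handles the connected case: I claim $Y$ is simply connected. The key point is that every loop in $Y$ is, up to based homotopy, the image under the orbit projection $q\colon X\to Y$ of a loop in $X$; since $\pi_1(X)=1$ this forces $\pi_1(Y)=1$. Concretely, granting path lifting for $q$ (the next step), a loop $c$ in $Y$ based at $q(x_0)$ lifts to a path $\tilde c$ in $X$ from $x_0$ to some point of the orbit $G_0x_0$, say $\tilde c(1)=g\,x_0$ with $g\in G_0$. Using that $G_0$ is \emph{path-connected}, I choose a path from $e$ to $g$ and ride it along the orbit to join $\tilde c(1)$ back to $x_0$ by a path that $q$ collapses to the constant loop. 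The resulting concatenation is a loop in $X$ whose $q$-image is based-homotopic to $c$, so $[c]$ lies in the image of $q_*$; as $\pi_1(X)=1$ we conclude $\pi_1(Y)=1$.

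Step two is establishing the path-lifting property for $q$, which I expect to be the main obstacle, since it is exactly where properness enters. Here I would invoke the Palais slice theorem \cite{MR0126506}: because the action is proper every stabilizer is compact, and each orbit has a $G_0$-invariant tube $G_0\times_H S$ with $H$ compact, whose image in $Y$ is the chart $S/H$. On such a chart path lifting reduces to path lifting for the \emph{compact} group $H$ acting on the slice $S$, which holds for orbit maps of compact Lie group actions (e.g.\ via horizontal lifts for an $H$-invariant Riemannian metric, the orbit map being a submetry onto the orbital-distance quotient). By compactness of $[0,1]$ I would subdivide a given path so that each subinterval maps into one chart, lift piecewise, and glue successive pieces by correcting with elements of $G_0$ at the subdivision points (two lifts of the same point of $Y$ differ by an element of $G_0$, which may be applied without leaving the class of lifts). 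The careful patching across the singular strata is the most delicate part of the whole argument; the same slice description also shows $Y$ is locally of the form $S/H$, hence connected and locally contractible, which is what the next step requires.

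Step three treats the discrete group and reconciles the two subgroups. The residual $\Gamma$-action on the simply connected space $Y$ is properly discontinuous, because the $G$-action on $X$ is proper and $G_0$ is closed and normal (properness descends to the quotient, and $\Gamma$ is discrete). Armstrong's theorem on fundamental groups of orbit spaces of discontinuous groups then yields $\pi_1(Y/\Gamma)\cong\Gamma/N$, where $N\trianglelefteq\Gamma$ is the normal subgroup generated by the elements of $\Gamma$ fixing a point of $Y$, i.e.\ by the stabilizers $\mathrm{Stab}_\Gamma(y)$. Finally I would check $N=M$: the stabilizer of $G_0x\in Y$ in $\Gamma$ equals $(G_0\,\mathrm{Stab}_G(x))/G_0$, which is precisely the image of the inclusion-induced map $\pi_0(\mathrm{Stab}_G(x))\to\pi_0(G)$. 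As $x$ ranges over $X$ the class $G_0x$ ranges over all of $Y$, so the normal subgroup these stabilizers generate is exactly $M$. Combining the three steps gives $\pi_1(X/G)=\pi_1(Y/\Gamma)\cong\pi_0(G)/M$, as claimed.
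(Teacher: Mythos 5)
Your proposal is correct in outline, but it takes a genuinely different route from the paper, which in fact offers no argument at all: the proposition is quoted as a special case of tom Dieck's result \cite[chap.~I, Proposition~8.10]{MR1027600}, which for a proper action on a connected (suitably locally nice) space provides an exact sequence $\pi_1(X,x_0)\rightarrow\pi_1(X/G,x_0G)\rightarrow\pi_0(G)/M\rightarrow 0$ together with a description of the kernel of the first map; simple connectivity of $X$ kills the first term and the isomorphism drops out. You instead reconstruct the mechanism by hand: factoring through $Y=X/G_0$, proving $\pi_1(Y)=1$ by path lifting (the fibers of $X\to Y$ are orbits of the \emph{path-connected} group $G_0$, so a lifted loop can be closed up inside an orbit by a path that projects to a constant), reducing path lifting to the compact case via Palais slices \cite{MR0126506}, and finishing the discrete part with Armstrong's theorem plus the identification $\mathrm{Stab}_{\Gamma}(G_0x)=\bigl(G_0\,\mathrm{Stab}_G(x)\bigr)/G_0=\mathrm{im}\bigl(\pi_0(\mathrm{Stab}_G(x))\to\pi_0(G)\bigr)$, which is exactly why the subgroup $M$ appears. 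The comparison: the paper's citation is a one-line specialization of a single reference whose statement is also more general (it handles non-simply-connected $X$), whereas your argument is more transparent about where each hypothesis is used but trades one black box for three smaller classical ones -- Palais slices, path lifting for compact Lie group actions (note that your horizontal-lift justification presumes a smooth action; for merely continuous proper actions one should instead invoke the Montgomery--Yang/Bredon path-lifting theorem), and Armstrong's 1968 theorem -- together with the checks you rightly flag: properness descends to the $\Gamma$-action on $Y$ (lift compact subsets of $Y$ to compact subsets of $X$ and apply properness upstairs), and $Y$, though not a manifold, is locally compact, metrizable and locally contractible by the linear slice description, so Armstrong's hypotheses do hold. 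Your decomposition into the connected and discrete parts is essentially how results of this type are proved, so the proposal buys insight at the cost of more bookkeeping; none of the remaining details is a gap in substance.
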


\begin{remark}
Proposition \ref{technical} is a special case of  {\cite[chap. I, Proposition 8.10]{MR1027600}}. 
Assuming that the space\footnote{The space $X$ is not required to be a manifold in \cite{MR1027600}. It only needs to be connected, locally path connected,
completely regular and possessing a universal cover.} $X$ is connected and the action of $G$ on $X$ is proper, the aforementioned reference exhibits an exact sequence 
$$\pi_1(X,x_0)\rightarrow\pi_1(X/G,x_0G)\rightarrow\pi_0(G)/M\rightarrow 0$$
and describes the kernel of the first homomorphism. The last homomorphism $$\pi_1(X/G,x_0G)\rightarrow\pi_0(G)/M$$ takes the homotopy class of a loop $\gamma$ based at $x_0G$ to the class represented by $g\in G$ whenever $\gamma$ admits a lift to a path $\tilde{\gamma}$ in $X$ with $\tilde{\gamma}(0)=x_0$ and $\tilde{\gamma}(1)=g.x_0$. 
As another special case pertinent to this article, for $G$ connected the map  $\pi_1(X)\rightarrow\pi_1(X/G)$ induced by the quotient map is an epimorphism. This can also be deduced from the main result of \cite{MR2852978} concerning quotient maps with connected fibers.
\end{remark}

Recall that a continuous action $G\curvearrowright X$ of a topological group $G$ on a topological space $X$ is called \textit{proper} if the map 
$$
\begin{cases}
G\times X\rightarrow X\times X\\
(g,x)\mapsto(g\cdot x,x)
\end{cases}
$$
is proper. This condition guarantees that the orbit space $X/G$ is not pathological \cite[chap. I, Lemma 1.19]{MR1027600}. 
Hence, in order to invoke Proposition \ref{technical}, we shall first prove the properness of the conjugation action.
This is the purpose of the following lemma whose proof is of dynamical flavour.
\begin{lemma}\label{proper}
The conjugation action ${\rm{PSL}}_2(\Bbb{C})\curvearrowright{\rm{Rat}}_d$ \eqref{auxiliary1} is proper.
\end{lemma} 

\begin{proof}
We need to establish that given a sequence $\left\{\alpha_n\right\}_n$ of M\"obius transformations and a sequence $\left\{f_n\right\}_n$ of degree $d$ rational maps converging to $f\in{\rm{Rat}}_d$, if  $\left\{\alpha_n\circ f_n\circ\alpha_n^{-1}\right\}_n$ also admits a limit in ${\rm{Rat}}_d$, say $g$,  then the sequence $\left\{\alpha_n\right\}_n$ of elements of ${\rm{PSL}}_2(\Bbb{C})$ has a convergent subsequence. We use the following well-known criterion for the convergence of a sequence $\left\{\alpha_n\right\}_n$ of M\"obius transformations:
\begin{itemize}
\item \textit{Let $\left\{p_n\right\}_n$, $\left\{q_n\right\}_n$, and $\left\{r_n\right\}_n$  be convergent sequences of points on the Riemann sphere  such that the transformed  sequences $\left\{\alpha_n(p_n)\right\}_n$, $\left\{\alpha_n(q_n)\right\}_n$ and $\left\{\alpha_n(r_n)\right\}_n$ converge as well:
\begin{equation*}
\begin{split}
&p_n\to p,\quad q_n\to q,\quad r_n\to r;\\
&\alpha_n(p_n)\to p',\quad \alpha_n(q_n)\to q',\quad \alpha_n(r_n)\to r'.
\end{split}
\end{equation*}
If the triples  $p,q,r$ and also $p',q',r'$ are pairwise distinct, then  $\left\{\alpha_n\right\}_n$ converges in ${\rm{PSL}}_2(\Bbb{C})$.} 
\end{itemize}
\indent
Recall that periodic points map to periodic points of the same (exact) period under the conjugation action. Since $f_n\to f$ (respectively $\alpha_n\circ f_n\circ\alpha_n^{-1}\to g$), any limit point of period $k$ points of elements of  $\left\{f_n\right\}_n$ (resp. of elements of $\left\{\alpha_n\circ f_n\circ\alpha_n^{-1}\right\}_n$) is a periodic point of $f$ (resp. of $g$). The idea is to use the fact that points of different periods are distinct and then apply the criterion above. The issue is that  period $k$ points of maps $f_n$ (or of  maps $\alpha_n\circ f_n\circ\alpha_n^{-1}$) may accumulate to a periodic point of $f$ (resp. of $g$) whose period is a proper divisor of $k$. This means a \textit{bifurcation} of periodic points has taken place; a situation that must be avoided. \\
\indent
Consider a periodic point $z_0$ of $f$ whose period is $k$. Such a point is a solution to $f^{\circ k}(z)-z=0$, but not every solution of this equation is of exact period $k$; periods which are proper divisors of $k$ may occur too. Perturbing $f$ to another map and $z_0$ to another solution, the period of the solution as a periodic point of the nearby map may increase. Such a difficulty could be circumvented if the \textit{multipliers} $\left(f^{\circ k}(z_0)\right)'$ of the solutions $z_0$ of $f^{\circ k}(z)-z=0$ differ from $1$. In that case, by the fundamental theorem of algebra, $f$ admits $d^k+1$ distinct periodic points of periods $m|k$ on the Riemann sphere, each of which could be analytically continued to periodic points of the same period for the nearby maps by an application of the implicit function theorem. When the multiplier is $1$ the periodic point is \textit{parabolic}. It is a standard fact from holomorphic dynamics that there are only finitely many parabolic cycles \cite[Corollary 10.16]{MR2193309}. Therefore, we can take distinct positive integers $k_1,k_2$ and $k_3$ so large that no periodic point of $f$ or $g$ with these periods is parabolic. As such, each periodic point of $f$ or $g$ with one of these periods can be analytically continued to a curve of points of the same period for maps close enough to $f$ or $g$ in ${\rm{Rat}}_d$. We deduce that, fixing a period $k_1$, $k_2$ or $k_3$, as $n\to\infty$ points of that period under $f_n$ (respectively, under $\alpha_n\circ f_n\circ\alpha_n^{-1}$) accumulate to the same kind of points for $f$ (resp. for $g$). Now, choosing points $p$, $q$ and $r$ whose periods under $f$ are $k_1$, $k_2$ and $k_3$ respectively, there are sequences $\{p_n\}_n$, $\{q_n\}_n$ and $\{r_n\}_n$ of points of the same periods under maps $f_n$ with $p_n\to p$, $q_n\to q$ and $r_n\to r$. Conjugating by $\alpha_n$, we obtain sequences  $\left\{\alpha_n(p_n)\right\}_n$, $\left\{\alpha_n(q_n)\right\}_n$ and $\left\{\alpha_n(r_n)\right\}_n$ of periodic points of maps $\alpha_n\circ f_n\circ\alpha_n^{-1}$ with the same description of periods. Passing to subsequences if necessary, the limits $p'$, $q'$ and $r'$ of these latter sequences must be periodic under $g$ with  periods $k_1$, $k_2$ and $k_3$ respectively. The points $p,q,r$ and also $p',q',r'$ are pairwise distinct as they are of different periods. The aforementioned criterion now finishes the proof. 
\end{proof}

Proposition \ref{technical} must be applied to an action on a simply connected space. This is why we consider a group action on the universal cover $\widetilde{\rm{Rat}}_d$. The space is simultaneously equipped with the covering space action \eqref{action1} of the group of $2d^{\rm{th}}$ roots of unity and the action \eqref{action2} of ${\rm{SL}}_2(\Bbb{C})$. These actions clearly commute, so we obtain the following commutative diagram of group actions: 
\begin{equation}\label{diagram1}
\xymatrixcolsep{6pc}
\xymatrix{\widetilde{\rm{Rat}}_d\ar[r]^{{\rm{SL}}_2(\Bbb{C})\curvearrowright}\ar[d]_{\Bbb{Z}/2d\Bbb{Z}\curvearrowright}
&\widetilde{\rm{Rat}}_d\big/{\rm{SL}}_2(\Bbb{C})\ar[d]_{\Bbb{Z}/2d\Bbb{Z}\curvearrowright}\\
{\rm{Rat}}_d\ar[r]^{{\rm{PSL}}_2(\Bbb{C})\curvearrowright}&\mathcal{M}_d
}
\end{equation} 

\begin{proof}[Proof of Theorem \ref{main 2}]
In view of diagram \eqref{diagram1}, we prove the theorem by applying Proposition \ref{technical} to a left action of $\{\lambda\mid\lambda^{2d}=1\}\times{\rm{SL}}_2(\Bbb{C})$ on  $\widetilde{{\rm{Rat}}}_d$ whose components are given by \eqref{action1} and \eqref{action2}:

\begin{equation}\label{action3}
\left(\lambda,
\begin{bmatrix}
r&s\\
t&u
\end{bmatrix}\right)\cdot
\left(P(X,Y),Q(X,Y)\right)=(P'(X,Y),Q'(X,Y))
\end{equation}
where
$$
P'(X,Y):=\lambda rP(uX-sY,-tX+rY)+\lambda sQ(uX-sY,-tX+rY)\text{,}
$$
and
$$
Q'(X,Y):=\lambda tP(uX-sY,-tX+rY)+\lambda uQ(uX-sY,-tX+rY).
$$
%:=\left(\lambda rP(uX-sY,-tX+rY)+\lambda sQ(uX-sY,-tX+rY),\lambda tP(uX-sY,-tX+rY)+\lambda uQ(uX-sY,-tX+rY)\right).
The action is clearly proper since it is a lift of the conjugation action of ${\rm{PSL}}_2(\Bbb{C})$ on ${\rm{Rat}}_d$ to an action of the finite-sheeted cover $\{\lambda\mid\lambda^{2d}=1\}\times{\rm{SL}}_2(\Bbb{C})$ of ${\rm{PSL}}_2(\Bbb{C})$. 
The group of connected components of the former group can be identified with $\{\lambda\mid\lambda^{2d}=1\}$ and, in order to apply Proposition \ref{technical}, we must show that the subgroup of it generated by 
the first component of elements $(\lambda,A)$ that fix a point in action \eqref{action3} coincides with $\{\lambda\mid\lambda^{2d}=1\}$ if $d$ is even and is an index two subgroup of it if $d$ is odd.  Denote the generator ${\rm{e}}^{\frac{\pi{\rm{i}}}{d}}$ of the group $\{\lambda\mid\lambda^{2d}=1\}$ by $\omega$. We have:
%\small
\begin{equation}\label{auxiliary17}
\begin{split}
&\left(-\omega^{-1},\begin{bmatrix}
\omega&0\\
0&\omega^{-1}
\end{bmatrix}\right)\cdot\left(X^d+Y^d,X^{d-1}Y\right)\\
&=-\omega^{-1}\left(\omega(\omega^{-1}X)^d+\omega(\omega Y)^d,\omega^{-1}(\omega^{-1}X)^{d-1}(\omega Y)\right)
=\left(-\omega^{-d}X^d-\omega^d Y^d,-\omega^{-d}X^{d-1}Y\right)\\
&=\left(X^d+Y^d,X^{d-1}Y\right);
\end{split}
\end{equation}
\normalsize
that is, an element from the connected component $\left\{-\omega^{-1}\right\}\times{\rm{SL}}_2(\Bbb{C})$ belongs to a stabilizer. 
However, $$-\omega^{-1}=-{\rm{e}}^{-\frac{\pi{\rm{i}}}{d}}={\rm{e}}^{\frac{\pi{\rm{i}}(d-1)}{d}}$$ is a primitive $2d^{\rm{th}}$ root of unity if and only if $\gcd(2d,d-1)=1$; i.e. $d$ is even. For $d$ odd, it would be of order $\frac{2d}{\gcd(2d,d-1)}=d$ and hence generate a subgroup of index two. So far, we have shown that $\pi_1(\mathcal{M}_d)$ is trivial for $d$ even while is of order at most two for $d$ odd. To prove the equality in the latter case, one needs to verify that if $d\geq 3$ is odd, $\omega$ itself cannot occur as the first component of a stabilizer of the action \eqref{action3}. Assuming the contrary, there is a unimodular matrix $A$ and homogeneous coprime polynomials $P(X,Y),Q(X,Y)$ of degree $d$ satisfying 
$$
(\omega,A)\cdot\left(P(X,Y),Q(X,Y)\right)=\left(P(X,Y),Q(X,Y)\right).
$$
This implies that the M\"obius transformation that $A$ descends to is an automorphism of the degree $d$ rational map that $P$ and $Q$ determine. In particular, $A$ must be of finite order and hence diagonalizable. Replacing $\left(P(X,Y),Q(X,Y)\right)$ with a suitable element of its orbit, we may assume that $A$ is already diagonal with non-zero entries $a$ and $a^{-1}$ in its first and second row. % say 
%$A=\begin{bmatrix}
%a&0\\
%0&a^{-1}
%\end{bmatrix}$.
 Substituting into the equation above, we get:
$$
\begin{cases}
aP\left(a^{-1}X,aY\right)=\omega^{-1} P(X,Y),\\
a^{-1}Q\left(a^{-1}X,aY\right)=\omega^{-1}Q(X,Y).
\end{cases}
$$
Comparing the coefficients of $X^d$ and $Y^d$ on different sides, we deduce that $\omega^{-1}$ is equal to $a^{-(d-1)}$ or $a^{-(d+1)}$ if $\deg_XP=d$ or $\deg_XQ=d$ respectively; while it is equal to $a^{d+1}$ or $a^{d-1}$  if $\deg_YP=d$ or $\deg_YQ=d$ respectively. In each situation at least one of the possibilities takes place since $\max\{\deg_XP,\deg_XQ\}=d$ and 
$\max\{\deg_YP,\deg_YQ\}=d$ due to the fact that $P$ and $Q$ are coprime. Thus, there are two expressions $a^{d+\epsilon_1}$ and $a^{-(d+\epsilon_2)}$ of $\omega^{-1}={\rm{e}}^{-\frac{\pi{\rm{i}}}{d}}$ where $\epsilon_1,\epsilon_2\in\{\pm 1\}$. If $\epsilon_1$ and $\epsilon_2$ coincide then $\omega^{-1}=\pm 1$; a contradiction. Otherwise, 
$a^{d+\epsilon_1}=a^{-(d+\epsilon_2)}$ amounts to $a^{2d}=1$. Hence, the generator $\omega^{-1}$ of $\{\lambda\mid\lambda^{2d}=1\}$ is the $(d+1)^{\rm{th}}$ or the $(d-1)^{\rm{th}}$ power of another element of this cyclic group. This is impossible because $\gcd(2d,d\pm 1)=2$.
\end{proof}

We conclude this section with an investigation of the fundamental groups of a number of related spaces. Proposition \ref{technical} can be used to compute the fundamental group of the space $\mathcal{M}_d^{\rm{pre}}$ introduced in \S\ref{application variant}. 
Calculating the fundamental group of the other quotient space $\mathcal{M}_d^{\rm{post}}$ that appeared in that section is easier since the post-composition action of ${\rm{PSL}}_2(\Bbb{C})$ on 
${\rm{Rat}}_d$ is free: its fundamental group is isomorphic to $\Bbb{Z}/d\Bbb{Z}$  \cite[Lemma 3.4]{MR1318152}. In the proposition below we focus on $\mathcal{M}_d^{\rm{pre}}$.
\begin{proposition}\label{variant fundamental group}
For any $d\geq 2$ one has $\pi_1(\mathcal{M}_d^{\rm{pre}})\cong\Bbb{Z}/d\Bbb{Z}$.  
\end{proposition}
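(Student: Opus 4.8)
The plan is to mimic the proof of Theorem \ref{main 2}, replacing the conjugation action by the pre-composition action throughout. Writing $\mu_{2d}:=\{\lambda\mid\lambda^{2d}=1\}$ and combining its scaling action \eqref{action1} with the pre-composition action \eqref{action2 pre-variant} of ${\rm{SL}}_2(\Bbb{C})$ (which likewise preserves the resultant, as $\det=1$), I obtain a left action of the group $G:=\mu_{2d}\times{\rm{SL}}_2(\Bbb{C})$ on the simply connected manifold $\widetilde{\rm{Rat}}_d$; the two factors commute, and the quotient is $\mathcal{M}_d^{\rm{pre}}$, exactly as in the pre-composition analogue of diagram \eqref{diagram1}. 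Since ${\rm{SL}}_2(\Bbb{C})$ is connected, $\pi_0(G)\cong\mu_{2d}$ and the class of a stabilizing pair $(\lambda,A)$ is just its first component $\lambda$. Provided the action is proper, Proposition \ref{technical} gives $\pi_1(\mathcal{M}_d^{\rm{pre}})\cong\mu_{2d}/M$, where $M\le\mu_{2d}$ is the subgroup generated by all $\lambda$ arising as the first component of an element of some stabilizer. The whole problem therefore reduces to proving $M=\{\pm1\}$, for then $\mu_{2d}/M\cong\Bbb{Z}/d\Bbb{Z}$ since $-1$ generates the unique order-two subgroup of the cyclic group $\mu_{2d}$.

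First I would establish properness, needed to invoke Proposition \ref{technical}. Given $f_n\to f$ in ${\rm{Rat}}_d$ with $f_n\circ\alpha_n^{-1}\to g$, I would fix three distinct points $w_1,w_2,w_3\in\Bbb{CP}^1$ that are regular values of both $f$ and $g$; then for large $n$ each fiber $f_n^{-1}(w_i)$ consists of $d$ points varying continuously with $n$. Choosing $p_n^{(i)}\in f_n^{-1}(w_i)$ converging to distinct limits $p^{(i)}$, the transformed points $\alpha_n\bigl(p_n^{(i)}\bigr)$ lie in $(f_n\circ\alpha_n^{-1})^{-1}(w_i)$ and hence accumulate only on the finite regular fiber $g^{-1}(w_i)$, so after passing to a subsequence they converge to distinct limits $q^{(i)}$. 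The three-point convergence criterion for Möbius transformations used in Lemma \ref{proper} then forces a convergent subsequence of $\{\alpha_n\}$. This gives properness of the pre-composition action and, by pulling back along the finite cover, of the $G$-action on $\widetilde{\rm{Rat}}_d$.

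For the inclusion $-1\in M$ I would exhibit an explicit stabilizer at the point of $\widetilde{\rm{Rat}}_d$ lying over $f(z)=z^d$, i.e. a scalar multiple of $(X^d,Y^d)$: for a diagonal $A={\rm{diag}}(a,a^{-1})$ the stabilizer equations collapse to $\lambda a^{-d}=1=\lambda a^{d}$, and taking $a$ a primitive $2d$-th root of unity gives $a^{2d}=1$ with $\lambda=a^{d}=-1$. The reverse inclusion $M\subseteq\{\pm1\}$ is the crux. Given any stabilizing $(\lambda,A)$ at $(P,Q)$, the identities $P\circ A^{-1}=\lambda^{-1}P$ and $Q\circ A^{-1}=\lambda^{-1}Q$ show that the Möbius transformation of $A$ is a deck transformation of $f=P/Q$, so $A$ has finite order and is diagonalizable; since conjugating $A$ by $C$ while replacing $(P,Q)$ by $C^{-1}\cdot(P,Q)$ leaves $\lambda$ unchanged, I may assume $A={\rm{diag}}(a,a^{-1})$. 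Writing $P=\sum_i p_iX^iY^{d-i}$ and $Q=\sum_j q_jX^jY^{d-j}$, the equations read $\lambda a^{d-2i}=1$ whenever $p_i\neq0$ (and similarly for $q_j$), so every exponent appearing in $P$ or $Q$ lies in one residue class modulo $m$, the order of $a^2$. Coprimality of $P,Q$ at $[1:0]$ and $[0:1]$ forces both $0$ and $d$ to occur among these exponents, hence $m\mid d$ and $\lambda=a^{-d}=(a^m)^{-d/m}=\pm1$, since $a^{m}=\pm1$. This yields $M\subseteq\{\pm1\}$, completing the computation.

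I expect the coprimality step in the final paragraph to be the main obstacle: excluding stabilizers whose first component is a genuine primitive $2d$-th root of unity rests entirely on the fact that a coprime pair cannot be supported in a single nontrivial arithmetic progression of exponents, which is precisely what pins down $m\mid d$ and collapses $\lambda$ to $\pm1$. (The case where $a^2$ is not a root of unity is subsumed, as it would force $P$ and $Q$ to be proportional monomials, contradicting coprimality.) The properness argument is the only other delicate point, but it is a routine and in fact simpler adaptation of the preimage-tracking idea underlying Lemma \ref{proper}.
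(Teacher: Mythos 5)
Your proposal is correct and takes essentially the same route as the paper's own proof: apply Proposition \ref{technical} to the combined action of $\{\lambda\mid\lambda^{2d}=1\}\times{\rm{SL}}_2(\Bbb{C})$ on $\widetilde{\rm{Rat}}_d$, check properness via the three-point M\"obius criterion, exhibit $-1$ in a stabilizer at $(X^d,Y^d)$ with a diagonal matrix, and rule out all other roots of unity by diagonalizing $A$ and comparing coefficients, with coprimality guaranteeing that the exponents $0$ and $d$ both occur. Your two departures are purely expository: you flesh out the properness claim (which the paper delegates to the criterion from Lemma \ref{proper}) by tracking preimages of regular values, and your residue-class-modulo-$m$ detour is a mild elaboration of the paper's direct comparison of the $X^d$- and $Y^d$-coefficients, which gives $\lambda^{-1}=a^{-d}=a^{d}$ and hence $\lambda=\pm1$ immediately.
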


\begin{proof}
Instead of the action \eqref{action3} of $\{\lambda\mid\lambda^{2d}=1\}\times{\rm{SL}}_2(\Bbb{C})$ on $\widetilde{{\rm{Rat}}}_d$, we consider another left action in which ${\rm{SL}}_2(\Bbb{C})$
acts only by pre-composition as in \eqref{action2 pre-variant}: 
\small
\begin{equation}\label{action4}
\left(\lambda,
\begin{bmatrix}
r&s\\
t&u
\end{bmatrix}\right)\cdot
\left(P(X,Y),Q(X,Y)\right):=
\left(\lambda P(uX-sY,-tX+rY),\lambda Q(uX-sY,-tX+rY)\right).
\end{equation}
\normalsize
The properness of this action (or equivalently that of the pre-composition action of ${\rm{PSL}}_2(\Bbb{C})$ on ${\rm{Rat}}_d$) follows at once from the criterion mentioned in the proof of Lemma \ref{proper}. Imitating the same arguments, we need to characterize the subgroup of $\{\lambda\mid\lambda^{2d}=1\}$ generated by elements that occur as the first component of a member of a stabilizer of action \eqref{action4}. The element $\lambda=-1$ certainly qualifies since 
\small
$$
\left(-1,
\begin{bmatrix}
\omega&0\\
0&\omega^{-1}
\end{bmatrix}\right)\cdot\left(X^d,Y^d\right)=-\left((\omega^{-1}X)^d,(\omega Y)^d\right)
=\left(-\omega^{-d}X^d,-\omega^d Y^d\right)=\left(X^d,Y^d\right)
$$
\normalsize
where $\omega:={\rm{e}}^{\frac{\pi{\rm{i}}}{d}}$.
We claim that $\pm 1$ are the sole $2d^{\rm{th}}$ roots of unity that occur in an element $(\lambda,A)$ stabilizing an element $\left(P(X,Y),Q(X,Y)\right)$. Again, it is no loss of generality to take $A\in{\rm{SL}}_2(\Bbb{C})$ to be a diagonal matrix with non-zero entries $a$ and $a^{-1}$ in its first and second row.
We now obtain identities 
$P\left(a^{-1}X,aY\right)=\lambda^{-1}P(X,Y)$ and $Q\left(a^{-1}X,aY\right)=\lambda^{-1}Q(X,Y)$. Either $\deg_XP$ or $\deg_XQ$ is $d$, and thus $\lambda^{-1}=a^{-d}$ by comparing the coefficients of $X^d$. The same reasoning based on the coefficients of $Y^d$ yields $\lambda^{-1}=a^d$. Therefore, $\lambda=\lambda^{-1}$ which implies $\lambda=\pm1$. 
\end{proof}

We now exhibit an explicit generator for the fundamental groups.

\begin{proposition}\label{generator}
The 1-parameter family
\begin{equation}\label{loop}
\left\{z\mapsto z^{d-1}+\frac{{\rm{e}}^{2\pi{\rm{i}}t}}{z}\right\}_{t\in [0,1]}    
\end{equation}
determines generators for $\pi_1({\rm{Rat}}^*_d)$ and $\pi_1({\rm{Rat}}_d)$, and also for $\pi_1(\mathcal{M}_d)$ (of course only interesting for $d$ odd). 
\end{proposition}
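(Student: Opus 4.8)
The plan is to analyze the loop \eqref{loop} through its expression as a ratio of polynomials and to detect its homotopy class by a resultant winding number. Write $f_t=p_t/q_t$ with $p_t(z)=z^d+{\rm{e}}^{2\pi{\rm{i}}t}$ and $q_t(z)=z$. Since the $d^{\rm{th}}$ roots of $-{\rm{e}}^{2\pi{\rm{i}}t}$ never vanish, $p_t$ and $q_t$ are coprime and $f_t\in{\rm{Rat}}_d$ for every $t$; moreover $f_1=f_0$, so \eqref{loop} is a genuine loop. As $f_t(\infty)=\infty$ for all $t$, the loop lies in the based parameter space ${\rm{Rat}}^*_d$ (realized here as degree $d$ maps fixing $\infty$, which is homeomorphic to the $\infty\mapsto 1$ model of \S\ref{background1}). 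I will first show it generates $\pi_1({\rm{Rat}}^*_d)\cong\Bbb{Z}$; the remaining two assertions will then follow formally from the surjections recorded in Proposition \ref{Rat fundamental} and in the remark following Proposition \ref{technical}.

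For the crucial first step I record the resultant
\begin{equation*}
{\rm{Res}}(p_t,q_t)=\prod_{p_t(\alpha)=0}\alpha=(-1)^d\,{\rm{e}}^{2\pi{\rm{i}}t},
\end{equation*}
which winds exactly once around the origin as $t$ runs over $[0,1]$. Normalizing $p_t$ to be monic of degree $d$ identifies ${\rm{Res}}$ with a continuous $\Bbb{C}^*$-valued function on ${\rm{Rat}}^*_d$, so its winding number defines a homomorphism $W\colon\pi_1({\rm{Rat}}^*_d)\to\Bbb{Z}$, and the computation above shows that $W$ sends the class of \eqref{loop} to $\pm 1$. Since a homomorphism out of $\pi_1({\rm{Rat}}^*_d)\cong\Bbb{Z}$ whose image contains a unit is surjective, and a surjective endomorphism of $\Bbb{Z}$ is an isomorphism, $W$ is an isomorphism; hence the loop is a generator. (Equivalently, $W$ is the isomorphism furnished by the resultant fibration \eqref{resultant map} and \eqref{fibration2}, transported across the homeomorphism between the two base-point models of ${\rm{Rat}}^*_d$.)

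The main obstacle is precisely this identification: the loop is naturally written in the model fixing $\infty$, with $q_t$ of degree one, rather than in the monic degree $d$ model underlying \eqref{resultant map}. I would dispatch it by passing to the homogenizations $P_t(X,Y)=X^d+{\rm{e}}^{2\pi{\rm{i}}t}Y^d$ and $Q_t(X,Y)=XY^{d-1}$, for which
\begin{equation*}
{\rm{Res}}(P_t,Q_t)={\rm{Res}}(P_t,X)\cdot{\rm{Res}}(P_t,Y)^{d-1}=\pm\,{\rm{e}}^{2\pi{\rm{i}}t};
\end{equation*}
thus the binary-form resultant agrees with the affine one up to sign and also winds once, making the winding number a model-independent invariant and confirming that $W$ is the relevant isomorphism. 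A reassuring cross-check comes from the universal cover \eqref{universal cover}: rescaling $(P_t,Q_t)$ by $\lambda_t$ with $\lambda_t^{2d}{\rm{Res}}(P_t,Q_t)=1$ yields a lift to $\widetilde{\rm{Rat}}_d$ that ends at $\omega^{-1}$ times its start, where $\omega={\rm{e}}^{\pi{\rm{i}}/d}$ generates the deck group, re-proving directly that \eqref{loop} generates $\pi_1({\rm{Rat}}_d)\cong\Bbb{Z}/2d\Bbb{Z}$.

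With the first step in hand, the remaining assertions are immediate. By Proposition \ref{Rat fundamental} the inclusion ${\rm{Rat}}^*_d\hookrightarrow{\rm{Rat}}_d$ induces the reduction $\Bbb{Z}\twoheadrightarrow\Bbb{Z}/2d\Bbb{Z}$, so the image of a generator is again a generator, giving the claim for ${\rm{Rat}}_d$. Finally, since ${\rm{PSL}}_2(\Bbb{C})$ is connected, the quotient map induces a surjection $\pi_1({\rm{Rat}}_d)\twoheadrightarrow\pi_1(\mathcal{M}_d)$ by the remark following Proposition \ref{technical}; hence the image of \eqref{loop} generates $\pi_1(\mathcal{M}_d)$. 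For $d$ even this group is trivial, while for $d$ odd the induced surjection $\Bbb{Z}/2d\Bbb{Z}\twoheadrightarrow\Bbb{Z}/2\Bbb{Z}$ carries the generator to the nontrivial element, which is the only interesting case.
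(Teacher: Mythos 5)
Your proof is correct and follows essentially the same route as the paper: detect the class of the loop by the winding of the resultant, using the identification $\pi_1({\rm{Rat}}^*_d)\cong\Bbb{Z}$ of Proposition \ref{Rat fundamental} (adjusted to the $\infty\mapsto\infty$ model), and then push the generator forward through the surjections $\pi_1({\rm{Rat}}^*_d)\twoheadrightarrow\pi_1({\rm{Rat}}_d)\twoheadrightarrow\pi_1(\mathcal{M}_d)$. The differences are cosmetic: you compute ${\rm{Res}}\left(z^d+{\rm{e}}^{2\pi{\rm{i}}t},z\right)=(-1)^d{\rm{e}}^{2\pi{\rm{i}}t}$ directly as a product of roots, while the paper derives the linearity ${\rm{Res}}\left(z^d+c,z\right)=c\,{\rm{Res}}\left(z^d+1,z\right)$ by a rescaling trick, and your lift to the universal cover is an extra, independently valid cross-check of the $\pi_1({\rm{Rat}}_d)$ statement.
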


\begin{proof}
The homomorphism $\pi_1({\rm{Rat}}^*_d)\rightarrow\pi_1({\rm{Rat}}_d)$ induced by inclusion is a surjection (Proposition \ref{Rat fundamental}) and the same is true for the homomorphism 
$\pi_1({\rm{Rat}}_d)\rightarrow\pi_1(\mathcal{M}_d)$ induced by the quotient map \cite{MR2852978}. Next, thinking of ${\rm{Rat}}^*_d$ as the space of rational maps of degree $d$ with $\infty\mapsto \infty$, we show that the loop \eqref{loop} gives rise to a generator of its fundamental group. By Proposition \ref{Rat fundamental}, it suffices to show that 
$$t\mapsto\frac{{\rm{Res}}\left(z^d+{\rm{e}}^{2\pi{\rm{i}}t},z\right)}{{\rm{e}}^{2\pi{\rm{i}}t}}$$
is constant where {\rm{Res}} denotes the degree $d$ resultant.\footnote{In the proposition, ${\rm{Rat}}^*_d$ is thought of as the subspace of maps with $\infty\mapsto 1$ while here we are working with maps taking $\infty$ to $\infty$. The resultant map \eqref{resultant map} must be considered as $\frac{p}{q}\in{\rm{Rat}}^*_d\mapsto{\rm{Res}}(p,q)\in\Bbb{C}^*$ where $p$ is monic of degree $d$ and $\deg q<d$. Clearly, it can still be used to identify the fundamental group of ${\rm{Rat}}^*_d$ with $\Bbb{Z}$.} One can proceed by computing this resultant, but a more elegant approach is to show that
${\rm{Res}}\left(z^d+c,z\right)$ with $c\neq 0$ is linear in $c$. Pre-composing the polynomials with $z\mapsto c^{\frac{1}{d}}z$  modifies the resultant as follows (cf. \cite[Exercise 2.7]{MR2316407})
$$\left(c^{\frac{1}{d}}\right)^{-d^2}{\rm{Res}}\left(cz^d+c,c^{\frac{1}{d}}z\right).$$ Hence,
\begin{equation*}
\begin{split}
{\rm{Res}}\left(z^d+c,z\right)&=\left(c^{\frac{1}{d}}\right)^{-d^2}{\rm{Res}}\left(c(z^d+1),c^{\frac{1}{d}}z\right)\\&=\left(c^{\frac{1}{d}}\right)^{-d^2}c^d\left(c^{\frac{1}{d}}\right)^d
{\rm{Res}}\left(z^d+1,z\right)=c\,{\rm{Res}}\left(z^d+1,z\right).   
\end{split}
\end{equation*}
\end{proof}

Our treatment of $\pi_1(\mathcal{M}_d)$ in this section relied on a result such as Proposition \ref{technical} because the conjugation action of ${\rm{PSL}}_2(\Bbb{C})$ on ${\rm{Rat}}_d$ is not free. The locus of non-free orbits is the symmetry locus $\mathcal{S}$ which is of the relatively large codimension $d-1$, and is the same as the singular locus of $\mathcal{M}_d$ when $d\geq 3$ (cf. Proposition \ref{symmetry locus}). Removing it, we arrive at a smooth variety $\mathcal{M}_d-\mathcal{S}$ which fits in the fiber bundle
\begin{equation}\label{fibration7}
{\rm{PSL}}_2(\Bbb{C})\hookrightarrow{\rm{Rat}}_d-\mathbf{S}\rightarrow\mathcal{M}_d-\mathcal{S}     
\end{equation}
where $\mathbf{S}$ is the preimage of $\mathcal{S}$ in ${\rm{Rat}}_d$. 
The final proposition of this section utilizes this fibration to compute the fundamental group of the smooth locus $\mathcal{M}_d-\mathcal{S}$.

\begin{proposition}\label{the smooth locus}
Given any $d\geq 3$, one has 
$$
\pi_1(\mathcal{M}_d-\mathcal{S})=
\begin{cases}
\Bbb{Z}/2d\Bbb{Z}\quad d\text{ odd}\text{, and}\\
\Bbb{Z}/d\Bbb{Z}\,\,\quad d\text{ even}.
\end{cases}
$$
\end{proposition}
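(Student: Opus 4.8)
The plan is to delete the symmetry locus so that \eqref{fibration7} becomes a genuine principal ${\rm{PSL}}_2(\Bbb{C})$-bundle, and then run its long exact sequence of homotopy groups. Writing the fibre sequence as
\[
{\rm{PSL}}_2(\Bbb{C})\hookrightarrow {\rm{Rat}}_d-\mathbf{S}\rightarrow\mathcal{M}_d-\mathcal{S},
\]
and using that ${\rm{PSL}}_2(\Bbb{C})$ is connected so $\pi_0({\rm{PSL}}_2(\Bbb{C}))=0$, the relevant tail reads
\[
\pi_1({\rm{PSL}}_2(\Bbb{C}))\xrightarrow{\ j\ }\pi_1({\rm{Rat}}_d-\mathbf{S})\longrightarrow\pi_1(\mathcal{M}_d-\mathcal{S})\longrightarrow 0,
\]
where $j$ is induced by a conjugation-orbit inclusion. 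Hence $\pi_1(\mathcal{M}_d-\mathcal{S})\cong{\rm{coker}}(j)$ and everything reduces to identifying the two groups and the homomorphism $j$.

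First I would pin down the two groups. Since $\mathbf{S}=\pi^{-1}(\mathcal{S})$ and the fibres of $\pi\colon{\rm{Rat}}_d\to\mathcal{M}_d$ are the $3$-dimensional orbits, $\mathbf{S}$ has the same codimension $d-1$ in ${\rm{Rat}}_d$ as $\mathcal{S}$ has in $\mathcal{M}_d$ (Proposition \ref{symmetry locus}); for $d\geq 3$ this is complex codimension $\geq 2$, i.e. real codimension $\geq 4$. Removing such a subvariety from the smooth variety ${\rm{Rat}}_d$ does not affect the fundamental group, so $\pi_1({\rm{Rat}}_d-\mathbf{S})\cong\pi_1({\rm{Rat}}_d)\cong\Bbb{Z}/2d\Bbb{Z}$ by Proposition \ref{Rat fundamental}. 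On the other side, ${\rm{PSL}}_2(\Bbb{C})$ deformation retracts onto ${\rm{PSU}}_2\cong{\rm{SO}}(3)$, whence $\pi_1({\rm{PSL}}_2(\Bbb{C}))\cong\Bbb{Z}/2\Bbb{Z}$. A homomorphism $\Bbb{Z}/2\Bbb{Z}\to\Bbb{Z}/2d\Bbb{Z}$ is either zero or carries the generator to the unique element of order two, namely $d\bmod 2d$; in the two cases ${\rm{coker}}(j)$ is $\Bbb{Z}/2d\Bbb{Z}$ or $\Bbb{Z}/d\Bbb{Z}$ respectively. The proposition is thus equivalent to the assertion that $j$ vanishes precisely when $d$ is odd.

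The heart of the proof --- and the step I expect to be most delicate --- is this explicit computation of $j$, which I would perform on the universal cover. A generator of $\pi_1({\rm{PSL}}_2(\Bbb{C}))$ is represented by the loop $\alpha_\theta(z)=e^{2\pi{\rm{i}}\theta}z$, $\theta\in[0,1]$, whose canonical lift to ${\rm{SL}}_2(\Bbb{C})$ is the path
\[
A_\theta=\begin{bmatrix} e^{\pi{\rm{i}}\theta} & 0 \\ 0 & e^{-\pi{\rm{i}}\theta}\end{bmatrix}
\]
running from $I$ to $-I$. Because ${\rm{Rat}}_d$ is connected, the homotopy class in ${\rm{Rat}}_d$ of the orbit map $\alpha\mapsto\alpha\circ f_0\circ\alpha^{-1}$ is independent of $f_0$, so I may compute the composite $\pi_1({\rm{PSL}}_2(\Bbb{C}))\to\pi_1({\rm{Rat}}_d-\mathbf{S})\cong\pi_1({\rm{Rat}}_d)$ using any lift $(P_0,Q_0)\in\widetilde{\rm{Rat}}_d$ together with the action \eqref{action2}. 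The path $\theta\mapsto A_\theta\cdot(P_0,Q_0)$ then covers the orbit loop, and evaluating at $\theta=1$ via $A_1=-I$ gives $A_1\cdot(P_0,Q_0)=(-1)^{d+1}(P_0,Q_0)$, using only that $P_0$ and $Q_0$ are homogeneous of degree $d$.

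Finally I would read this endpoint through the deck group $\{\lambda\mid\lambda^{2d}=1\}$ of the covering \eqref{universal cover}, with $\omega={\rm{e}}^{\frac{\pi{\rm{i}}}{d}}$ as in the proof of Theorem \ref{main 2}. When $d$ is odd the scalar $(-1)^{d+1}=1$, so the lifted loop closes up and $j$ is the zero map, giving ${\rm{coker}}(j)\cong\Bbb{Z}/2d\Bbb{Z}$. When $d$ is even the scalar is $-1=\omega^d$, the element of order two in $\{\lambda\mid\lambda^{2d}=1\}\cong\Bbb{Z}/2d\Bbb{Z}$, so $j$ is injective and ${\rm{coker}}(j)\cong(\Bbb{Z}/2d\Bbb{Z})/\langle d\rangle\cong\Bbb{Z}/d\Bbb{Z}$. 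This yields the stated dichotomy.
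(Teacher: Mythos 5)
Your proof is correct, and its skeleton coincides with the paper's: both run the long exact sequence of the genuine bundle \eqref{fibration7}, both observe that removing the subset $\mathbf{S}$ of real codimension $2(d-1)\geq 4$ leaves $\pi_1({\rm{Rat}}_d)\cong\Bbb{Z}/2d\Bbb{Z}$ untouched, and both reduce the proposition to deciding whether $j\colon\Bbb{Z}/2\Bbb{Z}\to\Bbb{Z}/2d\Bbb{Z}$ is zero or injective. Where you genuinely diverge is in the computation of $j$. You lift the generating loop $\alpha_\theta(z)=e^{2\pi{\rm{i}}\theta}z$ to the path $A_\theta$ of diagonal matrices in ${\rm{SL}}_2(\Bbb{C})$ and use the homogenized action \eqref{action2} on the universal cover \eqref{universal cover}, so the entire computation collapses to the single identity $(-{\rm{I}})\cdot(P,Q)=(-1)^{d+1}(P,Q)$, read off as a deck transformation. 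The paper instead conjugates the explicit map $f(z)=z^{d-1}+\frac{1}{z}$ by the same loop and tracks the winding of the resultant, building on Proposition \ref{generator} and the identification $\pi_1({\rm{Rat}}^*_d)\cong\Bbb{Z}$ of Proposition \ref{Rat fundamental}: the conjugated family winds $d^2-d$ times, and $d^2-d\bmod 2d$ is $0$ for $d$ odd and the order-two element $d$ for $d$ even. The two computations agree, since $\omega^{d^2-d}=e^{\pi{\rm{i}}(d-1)}=(-1)^{d+1}$. Your route is cleaner and more conceptual: it is basepoint-free (the endpoint identity holds for every $(P,Q)\in\widetilde{\rm{Rat}}_d$ simultaneously), it avoids all resultant calculus, and it runs parallel to the paper's own proof of Theorem \ref{main 2}, which likewise works with the lifted ${\rm{SL}}_2(\Bbb{C})$-action on $\widetilde{\rm{Rat}}_d$. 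What the paper's computation buys in exchange is an explicit description of the image of $j$ in terms of the concrete generator of $\pi_1({\rm{Rat}}_d)$ exhibited in Proposition \ref{generator}, keeping the argument tied to the resultant fibration \eqref{resultant map} that organizes the rest of \S\ref{homotopy groups}.
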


\begin{proof}
We need to analyze the homomorphism $\pi_1({\rm{PSL}}_2(\Bbb{C}))\rightarrow\pi_1({\rm{Rat}}_d-\mathbf{S})$ from the long exact sequence of homotopy groups corresponding to \eqref{fibration7}. The real codimension of the closed subset $\mathbf{S}$ of the connected manifold ${\rm{Rat}}_d$ is $2(d-1)>2$; thus, its removal does not affect the fundamental group. As such, we need to show that the homomorphism 
$$
\Bbb{Z}/2\Bbb{Z}\cong\pi_1({\rm{PSL}}_2(\Bbb{C}))\rightarrow\pi_1({\rm{Rat}}_d)\cong\Bbb{Z}/2d\Bbb{Z}
$$
induced by the inclusion of an orbit of the conjugation action is trivial when $d$ is odd and an injection for $d$ even. In view of Proposition \ref{generator}, we consider the orbit of $f(z)=z^{d-1}+\frac{1}{z}$. The loop $\left\{\alpha(t)={\rm{e}}^{2\pi{\rm{i}}t}z\right\}_{t\in [0,1]}$ of M\"obius transformations generates the fundamental group of ${\rm{PSL}}_2(\Bbb{C})$. Conjugating $f$ with these transformations, we arrive at the 1-parameter family
$$
\left\{\alpha(t)\circ f\circ\alpha(t)^{-1}:z\mapsto \frac{1}{\left({\rm{e}}^{2\pi{\rm{i}}t}\right)^{d-2}}\left(z^{d-1}+\frac{({\rm{e}}^{2\pi{\rm{i}}t})^d}{z}\right)\right\}_{t\in [0,1]}
$$
of rational maps of degree $d$ that take $\infty$ to $\infty$. Repeating the arguments in the proof of Proposition \ref{generator}, we find the ratio 
$$
\frac{{\rm{Res}}\left(z^d+({\rm{e}}^{2\pi{\rm{i}}t})^d,\left({\rm{e}}^{2\pi{\rm{i}}t}\right)^{d-2}z\right)}{{\rm{e}}^{2\pi{\rm{i}}t}}.
$$
As observed before, $t\mapsto{\rm{Res}}\left(z^d+{\rm{e}}^{2\pi{\rm{i}}t},z\right)$ is a non-zero multiple of $t\mapsto{\rm{e}}^{2\pi{\rm{i}}t}$. Hence, the numerator
$$
{\rm{Res}}\left(z^d+({\rm{e}}^{2\pi{\rm{i}}t})^d,\left({\rm{e}}^{2\pi{\rm{i}}t}\right)^{d-2}z\right)
=\left(\left({\rm{e}}^{2\pi{\rm{i}}t}\right)^{d-2}\right)^d{\rm{Res}}\left(z^d+({\rm{e}}^{2\pi{\rm{i}}t})^d,z\right)
$$
above is a non-zero multiple of $\left({\rm{e}}^{2\pi{\rm{i}}t}\right)^{d(d-2)+d}=\left({\rm{e}}^{2\pi{\rm{i}}t}\right)^{d^2-d}$. The element determined by the integer $d^2-d$ in the group  $\Bbb{Z}/2d\Bbb{Z}$ is trivial  for $d$ odd and is of order two when $d$ is even. 
\end{proof}

\begin{remark}
Notice that $\pi_1(\mathcal{M}_d-\mathcal{S})$ is not the same as $\pi_1(\mathcal{M}_d)$ even though the codimension of $\mathcal{S}$ is large.  
Indeed, for a normal irreducible complex variety $X$ ($\mathcal{M}_d$ is normal \cite[Theorem 2.1(c)]{MR1635900}), one can merely say that the homomorphism $\pi_1(X_{\rm{smooth}})\rightarrow\pi_1(X)$ is surjective \cite{MR3493218}.
\end{remark}

\section{The higher homotopy groups of \texorpdfstring{${\rm{Rat}}_d$}{Ratd}}\label{application resultant}
In this section, we prove Theorem \ref{main 3} after a sequence of preliminary results.

The commuting actions on the universal cover $\widetilde{\rm{Rat}}_d$ illustrated in diagram \eqref{diagram1} were essential to our calculation of $\pi_1(\mathcal{M}_d)$ in the previous section. Recall that the rows of the diagram were not fiber bundles because the conjugation action is not free. Working with the post-composition action introduced in \S\ref{application variant} instead, we obtain a similar commutative diagram of actions  
\begin{equation}\label{diagram2}
\xymatrixcolsep{6pc}
\xymatrix{\widetilde{\rm{Rat}}_d\ar[r]^{{\rm{SL}}_2(\Bbb{C})\stackrel{{\rm{post}}}{\curvearrowright}}\ar[d]_{\Bbb{Z}/2d\Bbb{Z}\curvearrowright}
&\widetilde{\rm{Rat}}_d\big/{\rm{SL}}_2(\Bbb{C})\ar[d]_{\Bbb{Z}/2d\Bbb{Z}\curvearrowright}\\
{\rm{Rat}}_d\ar[r]^{{\rm{PSL}}_2(\Bbb{C})\stackrel{{\rm{post}}}{\curvearrowright}}&\mathcal{M}_d^{\rm{post}}
}
\end{equation} 
which shall be used to prove Theorem \ref{main 3} in this section. The advantage of this new diagram is that the actions appearing in the rows are now free. Indeed, they are given by the post-composition action of M\"obius transformations on ${\rm{Rat}}_d$ and its lift (cf. \eqref{action2 post-variant})  to $\widetilde{\rm{Rat}}_d$. The left column is the covering space action \eqref{action1} of the group of $2d^{\rm{th}}$ roots of unity. In the right column, this group acts on the quotient $\widetilde{\rm{Rat}}_d\big/{\rm{SL}}_2(\Bbb{C})$ %of the post-composition action of ${\rm{SL}}_2(\Bbb{C})$
 because the two actions on $\widetilde{\rm{Rat}}_d$ commute. However, this latter action of the group of $2d^{\rm{th}}$ roots of unity is not free. 

\begin{lemma}\label{lemma3}
Let $d$ be an integer larger than one. The quotient $\widetilde{\rm{Rat}}_d\big/{\rm{SL}}_2(\Bbb{C})$ from diagram \eqref{diagram2} is simply connected. 
The action of the group of $2d^{\rm{th}}$ roots of unity in the right column of the diagram is free modulo a kernel generated by the root of unity $-1$. In particular, $\widetilde{\rm{Rat}}_d\big/{\rm{SL}}_2(\Bbb{C})$ is always the universal cover of $\mathcal{M}_d^{\rm{post}}$. 
\end{lemma}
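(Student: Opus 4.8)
The plan is to exploit the two commuting actions on $\widetilde{\rm{Rat}}_d$ recorded in diagram \eqref{diagram2}: the post-composition action of ${\rm{SL}}_2(\Bbb{C})$ given by \eqref{action2 post-variant} and the scaling action \eqref{action1} of the group $\{\lambda\mid\lambda^{2d}=1\}$ of $2d^{\rm{th}}$ roots of unity, which I abbreviate $\mu_{2d}$. The single recurring fact I will use is that if $(P,Q)\in\widetilde{\rm{Rat}}_d$, then $P$ and $Q$ are linearly independent over $\Bbb{C}$: being coprime homogeneous forms with $\max\{\deg P,\deg Q\}=d\geq 2$, neither can be a scalar multiple of the other without contradicting coprimality.

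First I would establish simple connectivity. Linear independence shows at once that the ${\rm{SL}}_2(\Bbb{C})$-action is free: if $\begin{bmatrix}r&s\\t&u\end{bmatrix}$ fixes $(P,Q)$, then $rP+sQ=P$ and $tP+uQ=Q$ force the matrix to be $I$. This action is moreover proper, being a lift through the finite covers $\widetilde{\rm{Rat}}_d\to{\rm{Rat}}_d$ and ${\rm{SL}}_2(\Bbb{C})\to{\rm{PSL}}_2(\Bbb{C})$ of the free proper post-composition action of ${\rm{PSL}}_2(\Bbb{C})$ on ${\rm{Rat}}_d$ (the bundle \eqref{fibration1'}), exactly as in the proof of Theorem \ref{main 2}. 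Hence $\widetilde{\rm{Rat}}_d\to\widetilde{\rm{Rat}}_d\big/{\rm{SL}}_2(\Bbb{C})$ is a principal ${\rm{SL}}_2(\Bbb{C})$-bundle. Feeding it into the long exact sequence of homotopy groups and using that $\widetilde{\rm{Rat}}_d$ is simply connected while ${\rm{SL}}_2(\Bbb{C})$ is connected and simply connected (so $\pi_0={\pi_1}=0$), I obtain $\pi_1\!\left(\widetilde{\rm{Rat}}_d\big/{\rm{SL}}_2(\Bbb{C})\right)=0$.

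Next I would pin down the kernel and the freeness of the induced $\mu_{2d}$-action. The key observation is that scaling by $-1$ coincides with post-composition by $-I\in{\rm{SL}}_2(\Bbb{C})$, so $-1$ acts trivially on the quotient and the kernel contains $\langle -1\rangle$. Conversely, a root of unity $\lambda$ fixes the orbit of $(P,Q)$ precisely when $(\lambda P,\lambda Q)=A\cdot(P,Q)$ for some $A\in{\rm{SL}}_2(\Bbb{C})$; linear independence forces $A=\lambda I$, whence $\det A=\lambda^2=1$ and $\lambda=\pm 1$. Since this computation is independent of the orbit, no $\lambda\neq\pm1$ has a fixed point, so the kernel is exactly $\langle -1\rangle=\{\pm1\}$ (of order two) and the induced action of $\mu_{2d}/\langle -1\rangle\cong\Bbb{Z}/d\Bbb{Z}$ is free.

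Finally, the ``in particular'' clause follows by assembling these facts. As $-1$ acts trivially, the $\mu_{2d}$-quotient in the right column of \eqref{diagram2} agrees with the $\Bbb{Z}/d\Bbb{Z}$-quotient, so $\mathcal{M}_d^{\rm{post}}=\left(\widetilde{\rm{Rat}}_d\big/{\rm{SL}}_2(\Bbb{C})\right)\big/(\Bbb{Z}/d\Bbb{Z})$ with $\Bbb{Z}/d\Bbb{Z}$ acting freely. A free action of a finite group on the Hausdorff manifold $\widetilde{\rm{Rat}}_d\big/{\rm{SL}}_2(\Bbb{C})$ is a covering space action, so this quotient map is a covering with simply connected total space, i.e. the universal cover of $\mathcal{M}_d^{\rm{post}}$. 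I expect the only genuinely delicate point to be verifying that $\widetilde{\rm{Rat}}_d\to\widetilde{\rm{Rat}}_d\big/{\rm{SL}}_2(\Bbb{C})$ is a bundle, that is, the properness of the ${\rm{SL}}_2(\Bbb{C})$-action; once that is in hand, everything else reduces to the single linear-independence computation.
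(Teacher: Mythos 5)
Your proof is correct and follows essentially the same route as the paper's: simple connectivity via the homotopy exact sequence of the bundle $\mathrm{SL}_2(\Bbb{C})\to\widetilde{\rm{Rat}}_d\to\widetilde{\rm{Rat}}_d\big/{\rm{SL}}_2(\Bbb{C})$, the identical coprimality/linear-independence argument forcing $A=\lambda I_2$ and hence $\lambda=\pm 1$, and the same conclusion that a free action of a cyclic group of order $d$ on a simply connected space exhibits the universal cover. The only difference is cosmetic: you explicitly justify freeness and properness of the ${\rm{SL}}_2(\Bbb{C})$-action to get the principal bundle, where the paper simply cites the bundle structure already set up in diagram \eqref{diagram2}.
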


\begin{proof}
The top row of diagram \eqref{diagram2} describes $\widetilde{\rm{Rat}}_d\big/{\rm{SL}}_2(\Bbb{C})$ as the base of a fiber bundle whose total space $\widetilde{\rm{Rat}}_d$ and fiber ${\rm{SL}}_2(\Bbb{C})$ are both simply connected. Hence, $\widetilde{\rm{Rat}}_d\big/{\rm{SL}}_2(\Bbb{C})$ must be simply connected as well. \\
\indent
Suppose a root of unity $\lambda\neq 1$ stabilizes the post-composition class of $(P,Q)\in\widetilde{{\rm{Rat}}}_d$. Namely, there is a matrix 
$$A=
\begin{bmatrix}
r&s\\
t&u
\end{bmatrix}\in{\rm{SL}}_2(\Bbb{C})$$
satisfying
$$
(\lambda P,\lambda Q)=A\cdot(P,Q)=(rP+sQ,tP+uQ). 
$$
This amounts to 
$$\begin{bmatrix}
r&s\\
t&u
\end{bmatrix}=\lambda{\rm{I}}_2$$
since, otherwise, the homogeneous polynomials $P(X,Y)$ and $Q(X,Y)$ would be scalar multiples of each other which is impossible as they are coprime. We deduce that $\lambda=-1$ since
$A$
is unimodular. Therefore, in the action from the right column of diagram \eqref{diagram2}, $\lambda=-1$ is the only non-trivial group element that can stabilize a point. As a matter of fact, it fixes any arbitrary element ${\rm{SL}}_2(\Bbb{C})\cdot(P,Q)\in\widetilde{\rm{Rat}}_d\big/{\rm{SL}}_2(\Bbb{C})$ since:
$$
\begin{bmatrix}
-1&0\\
0&-1
\end{bmatrix}\cdot(P,Q)
=(-P,-Q)=(-1)\cdot(P,Q).
$$
\indent So far, we have shown that  $\mathcal{M}_d^{\rm{post}}$ is the quotient of the simply connected space $\widetilde{\rm{Rat}}_d\big/{\rm{SL}}_2(\Bbb{C})$ by a free action of a cyclic group of order $d$. The latter is thus a universal cover of the former. 
\end{proof}

\begin{remark}
As mentioned in \S\ref{application variant}, the fundamental group of $\mathcal{M}_d^{\rm{post}}$ is known to be isomorphic to $\Bbb{Z}/d\Bbb{Z}$  \cite[Lemma 3.4]{MR1318152}. The lemma above verifies this by exhibiting an explicit universal cover.
\end{remark}

\begin{lemma}\label{lemma1}
Given any $d>3$, we have  
$$
\pi_i(\mathcal{M}_d^{\rm{post}})\otimes\Bbb{Q}\cong
\begin{cases}
0 & 1\leq i< d\text{, and}\\
\pi_i({\rm{Rat}}_d)\otimes\Bbb{Q}& i\geq d.
\end{cases}
$$
\end{lemma}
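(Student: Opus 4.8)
The plan is to feed the homotopy long exact sequence of the fiber bundle \eqref{fibration1'}, namely ${\rm{PSL}}_2(\Bbb{C})\hookrightarrow{\rm{Rat}}_d\rightarrow\mathcal{M}_d^{\rm{post}}$ (a genuine fibration because the post-composition action is free), into the flatness of $\Bbb{Q}$ over $\Bbb{Z}$. Since ${\rm{PSL}}_2(\Bbb{C})$ deformation retracts onto ${\rm{PSU}}_2\cong{\rm{SO}}(3)$, whose universal cover is $S^3$, we have $\pi_i({\rm{PSL}}_2(\Bbb{C}))\otimes\Bbb{Q}\cong\pi_i(S^3)\otimes\Bbb{Q}$, which is $\Bbb{Q}$ when $i=3$ and vanishes otherwise. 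Tensoring the long exact sequence with $\Bbb{Q}$ (exactness is preserved in degrees $\geq 2$ by flatness; the $\pi_1$-terms are all finite, hence rationally trivial) therefore yields, for every $i\geq 5$, short exact sequences with vanishing outer ${\rm{PSL}}_2(\Bbb{C})$-terms, giving isomorphisms $\pi_i({\rm{Rat}}_d)\otimes\Bbb{Q}\xrightarrow{\sim}\pi_i(\mathcal{M}_d^{\rm{post}})\otimes\Bbb{Q}$.

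The delicate range is around degree three, and this is the step I expect to be the main obstacle. The key observation is that the fiber-inclusion map $\pi_3({\rm{PSL}}_2(\Bbb{C}))\otimes\Bbb{Q}\to\pi_3({\rm{Rat}}_d)\otimes\Bbb{Q}$ appearing in the sequence is precisely the map induced by the orbit inclusion $\alpha\mapsto\alpha\circ f_0$ of the post-composition action; with this identification in hand, Lemma \ref{main lemma variant} tells us this map is an isomorphism of one-dimensional $\Bbb{Q}$-vector spaces. Feeding this into the relevant portion of the sequence,
\begin{equation*}
0\to\pi_4({\rm{Rat}}_d)\otimes\Bbb{Q}\to\pi_4(\mathcal{M}_d^{\rm{post}})\otimes\Bbb{Q}\to\pi_3({\rm{PSL}}_2(\Bbb{C}))\otimes\Bbb{Q}\xrightarrow{\sim}\pi_3({\rm{Rat}}_d)\otimes\Bbb{Q}\to\pi_3(\mathcal{M}_d^{\rm{post}})\otimes\Bbb{Q}\to 0,
\end{equation*}
the injectivity of the isomorphism gives $\pi_4({\rm{Rat}}_d)\otimes\Bbb{Q}\xrightarrow{\sim}\pi_4(\mathcal{M}_d^{\rm{post}})\otimes\Bbb{Q}$, while its surjectivity forces $\pi_3(\mathcal{M}_d^{\rm{post}})\otimes\Bbb{Q}=0$. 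Combined with the previous paragraph, this establishes the isomorphism $\pi_i(\mathcal{M}_d^{\rm{post}})\otimes\Bbb{Q}\cong\pi_i({\rm{Rat}}_d)\otimes\Bbb{Q}$ for all $i\geq 4$, which covers the claimed range $i\geq d$ since $d>3$.

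It then remains to verify the vanishing for $1\leq i<d$. For $i=1$ the group $\pi_1(\mathcal{M}_d^{\rm{post}})\cong\Bbb{Z}/d\Bbb{Z}$ is finite, hence rationally trivial; for $i=2$ the three-term exact piece $\pi_2({\rm{Rat}}_d)\to\pi_2(\mathcal{M}_d^{\rm{post}})\to\pi_1({\rm{PSL}}_2(\Bbb{C}))$ has outer terms $\pi_2({\rm{Rat}}_d)\cong\Bbb{Z}/2\Bbb{Z}$ and $\pi_1({\rm{PSL}}_2(\Bbb{C}))\cong\Bbb{Z}/2\Bbb{Z}$ which are torsion, so tensoring with $\Bbb{Q}$ sandwiches $\pi_2(\mathcal{M}_d^{\rm{post}})\otimes\Bbb{Q}$ between two zeros. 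We have already shown $\pi_3(\mathcal{M}_d^{\rm{post}})\otimes\Bbb{Q}=0$ above.

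Finally, for $4\leq i<d$ I would invoke Theorem \ref{Rat homotopy}: in this range $\pi_i({\rm{Rat}}_d)\cong\pi_{i+2}(S^2)\oplus\pi_i(S^3)$, and both summands are rationally trivial for $i>3$, since $S^2$ has rational homotopy only in degrees $2$ and $3$ while $S^3$ has it only in degree $3$. Hence the isomorphism $\pi_i(\mathcal{M}_d^{\rm{post}})\otimes\Bbb{Q}\cong\pi_i({\rm{Rat}}_d)\otimes\Bbb{Q}$ of the second step gives $\pi_i(\mathcal{M}_d^{\rm{post}})\otimes\Bbb{Q}=0$ throughout $4\leq i<d$, completing the vanishing over the whole range $1\leq i<d$. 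Everything here beyond the degree-three identification is a routine diagram chase once the rational homotopy of ${\rm{PSL}}_2(\Bbb{C})$ and the input from Theorem \ref{Rat homotopy} are in place.
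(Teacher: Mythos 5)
Your proposal is correct and follows essentially the same route as the paper: the long exact sequence of the fiber bundle ${\rm{PSL}}_2(\Bbb{C})\hookrightarrow{\rm{Rat}}_d\rightarrow\mathcal{M}_d^{\rm{post}}$, the identification of the fiber inclusion with a post-composition orbit inclusion so that Lemma \ref{main lemma variant} handles the degree-three term, and Theorem \ref{Rat homotopy} for the vanishing of $\pi_i({\rm{Rat}}_d)\otimes\Bbb{Q}$ in the range $4\leq i<d$. The only cosmetic difference is that you quote $\pi_1(\mathcal{M}_d^{\rm{post}})\cong\Bbb{Z}/d\Bbb{Z}$ from the literature, whereas the paper deduces finiteness of $\pi_1$ and $\pi_2$ of $\mathcal{M}_d^{\rm{post}}$ directly from the same exact sequence.
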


\begin{proof}
Consider the long exact sequence of  homotopy groups associated with the fiber bundle
$$
{\rm{PSL}}_2(\Bbb{C})\hookrightarrow{\rm{Rat}}_d\rightarrow\mathcal{M}_d^{\rm{post}}
$$
appearing in the bottom row of diagram \eqref{diagram2}. Its first few terms are 
%\small
$$
0=\pi_2({\rm{PSL}}_2(\Bbb{C}))\rightarrow\pi_2({\rm{Rat}}_d)\rightarrow\pi_2(\mathcal{M}_d^{\rm{post}})\rightarrow\stackrel{\cong\Bbb{Z}/2\Bbb{Z}}{\pi_1({\rm{PSL}}_2(\Bbb{C}))}
\rightarrow\pi_1({\rm{Rat}}_d)\rightarrow\pi_1(\mathcal{M}_d^{\rm{post}})\rightarrow 0.
$$
\normalsize
The groups $\pi_2({\rm{Rat}}_d)$ and $\pi_1({\rm{Rat}}_d)$ are finite and cyclic according to Theorem \ref{Rat homotopy} and Proposition \ref{Rat fundamental} respectively. We deduce that  
$\pi_1(\mathcal{M}_d^{\rm{post}})$ and $\pi_2(\mathcal{M}_d^{\rm{post}})$ are also finite and cyclic. Next, we need to address the rational homotopy groups $\pi_i(\mathcal{M}_d^{\rm{post}})\otimes\Bbb{Q}$ for $i\geq 3$. The group $\pi_i({\rm{PSL}}_2(\Bbb{C}))\cong\pi_i(\Bbb{RP}^3)$ is finite unless $i=3$ in which case it is an infinite cyclic group. Consequently, the long exact sequence of homotopy groups implies that
\begin{equation}\label{auxiliary1'}
\forall\, i\geq 5: \pi_i({\rm{Rat}}_d)\otimes\Bbb{Q}\cong\pi_i(\mathcal{M}_d^{\rm{post}})\otimes\Bbb{Q}.
\end{equation}
When $3\leq i<d$, Theorem \ref{Rat homotopy} describes the homotopy groups $\pi_i({\rm{Rat}}_d)$ in terms of homotopy groups of spheres $S^2$ and $S^3$. It is a standard fact that among the latter groups only $\pi_2(S^2)$, $\pi_3(S^2)$ and $\pi_3(S^3)$ are infinite. Thus $\pi_i({\rm{Rat}}_d)\otimes\Bbb{Q}=0$ for any 
$i\in\{4,\dots,d-1\}$. Combining this with \eqref{auxiliary1'}, we observe that the only task left is to verify the theorem for $\pi_3(\mathcal{M}_d^{\rm{post}})\otimes\Bbb{Q}$ 
and $\pi_4(\mathcal{M}_d^{\rm{post}})\otimes\Bbb{Q}$.
Tensoring with $\Bbb{Q}$, we have the following exact sequence of $\Bbb{Q}$-vector spaces:
\begin{equation}\label{long exact sequence}
\begin{split}
&0=\pi_4({\rm{PSL}}_2(\Bbb{C}))\otimes\Bbb{Q}\rightarrow\pi_4({\rm{Rat}}_d)\otimes\Bbb{Q}\rightarrow\pi_4(\mathcal{M}_d^{\rm{post}})\otimes\Bbb{Q}
\rightarrow\pi_3({\rm{PSL}}_2(\Bbb{C}))\otimes\Bbb{Q}\\%\rightarrow\pi_3({\rm{Rat}}_d)\otimes\Bbb{Q}\\
&\rightarrow\pi_3({\rm{Rat}}_d)\otimes\Bbb{Q}\rightarrow\pi_3(\mathcal{M}_d^{\rm{post}})\otimes\Bbb{Q}\rightarrow\pi_2({\rm{PSL}}_2(\Bbb{C}))\otimes\Bbb{Q}=0.
\end{split}
\end{equation}
The homomorphism 
$$\pi_3({\rm{SL}}_2(\Bbb{C}))\otimes\Bbb{Q}\cong\pi_3({\rm{PSL}}_2(\Bbb{C}))\otimes\Bbb{Q}\rightarrow
\pi_3({\rm{Rat}}_d)\otimes\Bbb{Q}\cong\pi_3(\widetilde{\rm{Rat}}_d)\otimes\Bbb{Q}$$ 
above is induced by the inclusion of an orbit of the post-composition action. This is an isomorphism by Lemma \ref{main lemma variant}.  We conclude that
$\pi_4({\rm{Rat}}_d)\otimes\Bbb{Q}\cong\pi_4(\mathcal{M}_d^{\rm{post}})\otimes\Bbb{Q}$
and $\pi_3(\mathcal{M}_d^{\rm{post}})\otimes\Bbb{Q}=0$.
\end{proof}

\begin{lemma}\label{lemma4}
Given $d>3$, one has 
$$
b_i(\mathcal{R}_d)=
\begin{cases}
0 & 1\leq i< d\text{, and}\\
{\rm{rank}}\,\pi_i({\rm{Rat}}_d) & d\leq i\leq 2d-2.
\end{cases}
$$
\end{lemma}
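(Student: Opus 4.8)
The plan is to show that $\mathcal{R}_d$ is rationally $(d-1)$-connected, apply the rational Hurewicz theorem to convert homotopy into homology out to degree $2d-2$, and then match the resulting homotopy groups with those of ${\rm{Rat}}_d$. First I would record two consequences of the fibrations already set up in \S\ref{background1}. The space $\mathcal{R}_d$ is simply connected, as established via \eqref{fibration2} and \eqref{fibration3}. Moreover, since the base $\Bbb{C}-\{0\}$ of the fibration \eqref{fibration2}, namely $\mathcal{R}_d\hookrightarrow{\rm{Rat}}^*_d\rightarrow\Bbb{C}-\{0\}$, is homotopy equivalent to $S^1$, the long exact sequence of homotopy groups yields isomorphisms $\pi_i(\mathcal{R}_d)\cong\pi_i({\rm{Rat}}^*_d)$ for all $i\geq 2$. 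Now, by Segal's theorem the groups $\pi_i({\rm{Rat}}^*_d)$ coincide with the homotopy groups of the degree $d$ component of $\Omega^2S^2$ for $1\leq i<d$, and within this range all higher homotopy groups of ${\rm{Rat}}^*_d$ are torsion \cite{MR533892} (only $\pi_2(S^2)$ and $\pi_3(S^2)$ are infinite among the homotopy groups of $S^2$). Consequently $\pi_i(\mathcal{R}_d)\otimes\Bbb{Q}=0$ for $2\leq i<d$.

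With $\mathcal{R}_d$ simply connected and rationally $(d-1)$-connected, I would invoke the rational Hurewicz theorem of Klaus and Kreck \cite{MR2055050}: the Hurewicz homomorphism $\pi_i(\mathcal{R}_d)\otimes\Bbb{Q}\rightarrow H_i(\mathcal{R}_d;\Bbb{Q})$ is an isomorphism for $i\leq 2d-2$. In particular $H_i(\mathcal{R}_d;\Bbb{Q})=0$ for $2\leq i<d$, so together with simple connectivity one gets $b_i(\mathcal{R}_d)=0$ for $1\leq i<d$, which is the first case of the lemma. For the second case, the same theorem gives $H_i(\mathcal{R}_d;\Bbb{Q})\cong\pi_i(\mathcal{R}_d)\otimes\Bbb{Q}$, hence $b_i(\mathcal{R}_d)={\rm{rank}}\,\pi_i(\mathcal{R}_d)={\rm{rank}}\,\pi_i({\rm{Rat}}^*_d)$ throughout $d\leq i\leq 2d-2$. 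It then remains to pass from ${\rm{Rat}}^*_d$ to ${\rm{Rat}}_d$. For this I would use the evaluation fibration \eqref{fibration1}, namely ${\rm{Rat}}^*_d\hookrightarrow{\rm{Rat}}_d\rightarrow\Bbb{CP}^1$. Since $\pi_i(\Bbb{CP}^1)=\pi_i(S^2)$ is finite for every $i\geq 4$, its long exact sequence shows that $\pi_i({\rm{Rat}}^*_d)\otimes\Bbb{Q}\cong\pi_i({\rm{Rat}}_d)\otimes\Bbb{Q}$ for all $i\geq 4$; as $d>3$, this covers the entire range $d\leq i\leq 2d-2$. Combining the identifications yields $b_i(\mathcal{R}_d)={\rm{rank}}\,\pi_i({\rm{Rat}}_d)$ for $d\leq i\leq 2d-2$, as claimed.

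The step I expect to be the crux is the rational Hurewicz bookkeeping: one must be certain that $\mathcal{R}_d$ is genuinely rationally $(d-1)$-connected—not merely that a few isolated rational homotopy groups vanish—so that the Hurewicz isomorphism persists all the way out to degree $2d-2$. This connectivity is exactly what Segal's computation of the low-dimensional homotopy of ${\rm{Rat}}^*_d$ (through the double loop space $\Omega^2S^2$) supplies, via the finiteness of $\pi_{i+2}(S^2)$ for $i\geq 2$. The potential pitfall in degree three—where ${\rm{Rat}}_d$ carries a rank-one homotopy group coming from the base $\Bbb{CP}^1$ while ${\rm{Rat}}^*_d$ does not (cf.\ Theorem \ref{Rat homotopy})—is harmless here precisely because the relevant range begins at $i\geq d\geq 4$, so the fibration \eqref{fibration1} identifies the two rationally without any discrepancy.
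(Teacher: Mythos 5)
Your proof is correct, but it takes a genuinely different route from the paper's. The paper never applies the rational Hurewicz theorem to $\mathcal{R}_d$ directly: instead it applies it to the universal cover $\widetilde{\rm{Rat}}_d\big/{\rm{SL}}_2(\Bbb{C})$ of $\mathcal{M}_d^{\rm{post}}$, whose rational homotopy is computed in Lemma \ref{lemma1} from the fiber bundle ${\rm{PSL}}_2(\Bbb{C})\hookrightarrow{\rm{Rat}}_d\rightarrow\mathcal{M}_d^{\rm{post}}$ together with the orbit-inclusion $\pi_3$ computation of Lemma \ref{main lemma variant}; it then identifies $\widetilde{\rm{Rat}}_d\big/{\rm{SL}}_2(\Bbb{C})$ with $\mathcal{R}_d$ up to weak equivalence by observing that $\mathcal{R}_d$ is the fiber of $\widetilde{\rm{Rat}}_d\rightarrow\Bbb{C}^2-\{(0,0)\}$ over $(1,1)$ and that the residual free action of the stabilizer $(\Bbb{C},+)$ has contractible orbits. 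You bypass all of that machinery: you get rational $(d-1)$-connectivity of $\mathcal{R}_d$ from the fibration \eqref{fibration2} over $\Bbb{C}-\{0\}$ together with Segal's theorem (the crucial vanishing of $\pi_3(\mathcal{R}_d)\otimes\Bbb{Q}$ coming from $\pi_5(S^2)$ being finite, which is where your hypothesis $d>3$ enters), apply rational Hurewicz to $\mathcal{R}_d$ itself, and transfer ranks to ${\rm{Rat}}_d$ through the evaluation fibration \eqref{fibration1}, using that $\pi_i(S^2)$ is finite for $i\geq 4$. Your argument is shorter and more self-contained, relying only on \S\ref{background1}; what the paper's detour buys is precisely the structural by-products it reuses later: the identification of $\mathcal{R}_d$ with the universal cover of $\mathcal{M}_d^{\rm{post}}$ and the $\pi_1$-equivariant isomorphism $\pi_i(\mathcal{M}_d^{\rm{post}})\otimes\Bbb{Q}\cong\pi_i({\rm{Rat}}_d)\otimes\Bbb{Q}$, both of which are essential inputs to the non-nilpotency result (Proposition \ref{not nilpotent}) and would not follow from your more direct argument.
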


\begin{proof}
By virtue of Lemma \ref{lemma1}, the homotopy groups $\pi_i(\mathcal{M}_d^{\rm{post}})$ are torsion for $1\leq i\leq d-1$. Thus, by the rational Hurewicz theorem (cf. \cite{MR2055050}), 
the $i^{\rm{th}}$ rational homology and rational homotopy groups of the universal cover of
$\mathcal{M}_d^{\rm{post}}$ coincide whenever $0\leq i\leq 2d-2$. Lemma \ref{lemma1} yields the latter groups; and a universal covering space for $\mathcal{M}_d^{\rm{post}}$ was exhibited in Lemma \ref{lemma3}. Consequently:
\begin{equation}\label{auxiliary10}
H_i\left(\widetilde{\rm{Rat}}_d\big/{\rm{SL}}_2(\Bbb{C});\Bbb{Q}\right)
\cong
\begin{cases}
0 & 1\leq i< d\textit{, and}\\
\pi_i({\rm{Rat}}_d)\otimes\Bbb{Q} & d\leq i\leq 2d-2.
\end{cases}
\end{equation}
We claim that the quotient space $\widetilde{\rm{Rat}}_d\big/{\rm{SL}}_2(\Bbb{C})$ is weakly equivalent to the resultant=1 hypersurface $\mathcal{R}_d$. Identity \eqref{auxiliary10} would then finish the proof. The free action \eqref{action2 post-variant} of ${\rm{SL}}_2(\Bbb{C})$ on $\widetilde{\rm{Rat}}_d$ changes the leading coefficients (the coefficients of $X^d$) of a pair $\left(P(X,Y),Q(X,Y)\right)\in\widetilde{\rm{Rat}}_d$ of homogeneous polynomials of degree $d$ through the canonical left action of ${\rm{SL}}_2(\Bbb{C})$ on $\Bbb{C}^2-\{(0,0)\}$. The latter action is transitive and hence one can represent any element of $\widetilde{\rm{Rat}}_d\big/{\rm{SL}}_2(\Bbb{C})$ by a pair of monic polynomials. However, from its definition, $\mathcal{R}_d$ is the fiber above $(1,1)$ of the map 
$\widetilde{\rm{Rat}}_d\rightarrow\Bbb{C}^2-\{(0,0)\}$ which sends each pair of polynomials to the corresponding vector of leading coefficients; see \eqref{fibration3}. Consequently, $\widetilde{\rm{Rat}}_d\big/{\rm{SL}}_2(\Bbb{C})$ can be identified with $\mathcal{R}_d$ modulo the post-composition action of the subgroup
$$
\left\{\begin{bmatrix}
1-s&s\\
-s&1+s
\end{bmatrix}\,\Bigg|\,s\in\Bbb{C}\right\}\cong(\Bbb{C},+)
$$
which is the stabilizer of $(1,1)$ in the action ${\rm{SL}}_2(\Bbb{C})\curvearrowright\Bbb{C}^2-\{(0,0)\}$. The quotient map 
$$
\mathcal{R}_d\rightarrow \mathcal{R}_d/\Bbb{C}\cong\widetilde{\rm{Rat}}_d\big/{\rm{SL}}_2(\Bbb{C}) 
$$
for this free $\Bbb{C}$-action is a weak homotopy equivalence as it is a fibration with contractible fibers. 
\end{proof}

This series of lemmas culminates in the following:
\begin{proof}[Proof of Theorem \ref{main 3}]
Let $d$ be larger than three. In view of Lemma \ref{lemma4} and the fact that by Theorem \ref{Rat homotopy} the groups $\pi_i({\rm{Rat}}_d)$ are all torsion when $i<d$ except $\pi_3({\rm{Rat}}_d)$ which is of rank one, we have: 
$$
{\rm{rank}}\,\pi_i({\rm{Rat}}_d)=
\begin{cases}
1 & i=3,\\
0 &  1\leq i<d, i\neq 3,\textit{ and}\\
b_i(\mathcal{R}_d) & d\leq i\leq 2d-2.
\end{cases}
$$
On the other hand,  \cite{MR1365252} establishes that 
\begin{equation}\label{comparison based-unbased}
\forall i,d\geq 3: \pi_i({\rm{Rat}}_d)\cong\pi_i({\rm{Rat}}^*_d)\oplus\pi_i(S^3);    
\end{equation}
so the ranks of $\pi_i({\rm{Rat}}_d)$ and 
$\pi_i({\rm{Rat}}^*_d)$ agree whenever $i>3$. Therefore, one only needs to calculate the Betti numbers $b_i(\mathcal{R}_d)$ for $d\leq i\leq 2d-2$. The $\ell$-adic cohomology groups of the resultant=1 hypersurface $\mathcal{R}_d$ have been computed in \cite[Theorem 1.1]{MR3652084} for any $d\geq 1$:
\begin{equation}\label{l-adic Betti}
\forall i>0: \dim_{\Bbb{Q}_\ell}H^i_{et}(\mathcal{R}_{d/\bar{\Bbb{F}}_p};\Bbb{Q}_\ell)=
\begin{cases}
\phi\left(\frac{d}{d-\frac{i}{2}}\right)& \textit{if } 2|i \textit{ and } d-\frac{i}{2}\big|d,\textit{ and}\\
0 &\textit{otherwise},
\end{cases}
\end{equation}
which holds for all but finitely many primes $p$.\footnote{Observe that this also indicates that the \'etale cohomology of $\mathcal{R}_d$ is trivial in positive dimensions less than $d$ 
which is in agreement with the rational homology calculation of Lemma \ref{lemma4}.} 
Notice that $\mathcal{R}_d$ (as a scheme over $\Bbb{Z}$) is smooth: it appears as the fiber of the differentiable fibration $\widetilde{\rm{Rat}}_d\rightarrow\Bbb{C}^2-\{(0,0)\}$ from \eqref{fibration3} (cf. \cite[Remark 1.2]{MR3652084}). Consequently, the Artin comparison theorem can be invoked (see \cite{MR0232775} or \cite[Theorem 2.1]{MR3652084} for the version we are using here) to obtain:
$$
H^i_{et}(\mathcal{R}_{d/\bar{\Bbb{F}}_p};\Bbb{Q}_\ell)\otimes_{\Bbb{Q}_\ell}\Bbb{C}\cong H^i(\mathcal{R}_d;\Bbb{C})
$$
for all but finitely many primes $p$. Combining this all together and letting $j:=d-\frac{i}{2}$, we find for all $d\leq i\leq 2d-2$ that:

$$
{\rm{rank}}\,\pi_i({\rm{Rat}}^*_d)={\rm{rank}}\,\pi_i({\rm{Rat}}_d)=b_i(\mathcal{R}_d)=
\begin{cases}
\phi\left(\frac{d}{j}\right)&\textit{ if } i=2d-2j \textit{ where }1\leq j<d \textit{ divides } d\textit{, and}\\
0 & \textit{otherwise,}\\
\end{cases}
$$
\normalsize
which concludes the proof.
\end{proof}

\begin{remark}
Before moving ahead, we address the rational homotopy groups $\pi_i({\rm{Rat}}^*_d)\otimes\Bbb{Q}$ and $\pi_i({\rm{Rat}}_d)\otimes\Bbb{Q}$ for degrees $d=2,3$ which are absent from Theorem \ref{main 3}. All rational homotopy groups can readily be calculated in the case of quadratic maps: the universal cover of ${\rm{Rat}}_2$ is of the homotopy type of $S^2\times S^3$ while that of the parameter space ${\rm{Rat}}^*_2$ of based quadratic maps is homotopy equivalent to $S^2$; see \cite[Theorem 2.1]{MR1246482} or \cite[Theorem 3]{MR1365252}.\\
\indent
For $d=3$, the Hurewicz homomorphism still yields an isomorphism $\pi_3(\widetilde{\rm{Rat}}_3)\otimes\Bbb{Q}\stackrel{\sim}{\rightarrow}H_3(\widetilde{\rm{Rat}}_3;\Bbb{Q})$ since the second homotopy group of the simply connected space $\widetilde{\rm{Rat}}_3$ is torsion by Theorem \ref{Rat homotopy}; but the theorem does not shed light on $\pi_3(\widetilde{\rm{Rat}}_3)$. Instead of a commutative diagram such as \eqref{diagram} with isomorphisms of one-dimensional $\Bbb{Q}$-vector spaces as rows, we have the following:
\begin{equation*}
\xymatrixcolsep{5pc}\xymatrix{\pi_3({\rm{SL}}_2(\Bbb{C}))\otimes\Bbb{Q}\,\ar[d]^{\sim}_{\text{rational Hurewicz}}\ar@{^{(}->}[r] 
& \pi_3(\widetilde{\rm{Rat}}_3)\otimes\Bbb{Q} \ar[d]^{\text{rational Hurewicz}}_{\sim}\\
H_3({\rm{SL}}_2(\Bbb{C});\Bbb{Q})\, \ar[d]^{\sim} \ar@{^{(}->}[r]& H_3(\widetilde{\rm{Rat}}_3;\Bbb{Q}) \ar@{->>}[d]\\
H_3({\rm{PSL}}_2(\Bbb{C});\Bbb{Q})\ar[r]^{\sim}& H_3({\rm{Rat}}_3;\Bbb{Q})\ar@/_/[u]_{\text{transfer}}.
}
\end{equation*} 
Here, recall that the orbit inclusion gives rise to an isomorphism between the third homology groups of ${\rm{PSL}}_2(\Bbb{C})$ and ${\rm{Rat}}_d$ even for $d=2,3$; see the proof of Theorem \ref{main variant}.
Hence, in view of the long exact sequence \eqref{long exact sequence}, when $d=3$ the only modification necessary in Lemma \ref{lemma4}  is to replace   ${\rm{rank}}\,\pi_3({\rm{Rat}}_3)$ with
${\rm{rank}}\,\pi_3({\rm{Rat}}_3)-1$. Again, by a combination of the Artin comparison theorem and the description \eqref{l-adic Betti} of $\ell$-adic Betti numbers, one obtains the Betti numbers $b_3(\mathcal{R}_3)=0$ and $b_4(\mathcal{R}_3)=\phi(3)=2$. Therefore, 
$$
{\rm{rank}}\,\pi_3({\rm{Rat}}_3)=1, \quad {\rm{rank}}\,\pi_4({\rm{Rat}}_3)=2;
$$
and then by \eqref{comparison based-unbased}
$$
{\rm{rank}}\,\pi_3({\rm{Rat}}^*_3)=0, \quad {\rm{rank}}\,\pi_4({\rm{Rat}}^*_3)=2.
$$
\end{remark}

%the first part was already mentioned in the introduction and the second part is already implicit above
%\begin{remark}
%Theorem \ref{main 3} is interesting for two reasons. First, because the homotopy groups of  ${\rm{Rat}}^*_d$ or ${\rm{Rat}}_d$ it concerns are not addressed by  \cite{MR533892,MR1365252} and, second, because in its proof we used the more tractable rational homology groups of the resultant=1 hypersurface to obtain the rank of the homotopy groups. 
%\end{remark}

We conclude with a proof of Theorem \ref{not nilpotent} which follows from our computations above. 
\begin{proof}[Proof of Theorem \ref{not nilpotent}]
We first show that $\mathcal{M}_d^{\rm{post}}$ is not nilpotent. Seeking a contradiction, suppose the action of $\pi_1(\mathcal{M}_d^{\rm{post}})\cong\Bbb{Z}/d\Bbb{Z}$ on the homotopy groups of its universal cover $\widetilde{\rm{Rat}}_d\big/{\rm{SL}}_2(\Bbb{C})$ are nilpotent. As observed before, the Hurewicz map 
$$\pi_i\left(\widetilde{\rm{Rat}}_d\big/{\rm{SL}}_2(\Bbb{C})\right)\otimes\Bbb{Q}\rightarrow H_i\left(\widetilde{\rm{Rat}}_d\big/{\rm{SL}}_2(\Bbb{C});\Bbb{Q}\right)$$
is an isomorphism for $i\leq 2d-2$ and the target vector space is of dimension $b_i(\mathcal{R}_d)$. Hence, the action of $\pi_1(\mathcal{M}_d^{\rm{post}})$ on these homology groups of its universal cover must be nilpotent too. When $i=2d-2$, the group  $H_{2d-2}\left(\widetilde{\rm{Rat}}_d\big/{\rm{SL}}_2(\Bbb{C});\Bbb{Q}\right)$ is of dimension $b_{2d-2}(\mathcal{R}_d)=\phi(d)$ (cf. \eqref{l-adic Betti}) and thus non-trivial. By definition, any non-trivial nilpotent representation admits a non-trivial fixed vector. But here, such a vector amounts to a non-trivial class in  $H_{2d-2}(\mathcal{M}_d^{\rm{post}};\Bbb{Q})$. This contradicts Theorem \ref{main variant}. \\
\indent
We now show that ${\rm{Rat}}_d$ is not nilpotent. The $\Bbb{Q}$-linear map $$\pi_{2d-2}({\rm{Rat}}_d)\otimes\Bbb{Q}\rightarrow\pi_{2d-2}(\mathcal{M}_d^{\rm{post}})\otimes\Bbb{Q}$$
is equivariant with respect to the epimorphism $\pi_1({\rm{Rat}}_d)\twoheadrightarrow\pi_1(\mathcal{M}_d^{\rm{post}})$ and the corresponding group actions. It is also an isomorphism (cf. \eqref{auxiliary1'}). Hence, the action of $\pi_1({\rm{Rat}}_d)$ on $\pi_{2d-2}({\rm{Rat}}_d)\otimes\Bbb{Q}$ cannot be nilpotent since the action of $\pi_1(\mathcal{M}_d^{\rm{post}})$ on $$\pi_{2d-2}(\mathcal{M}_d^{\rm{post}})\otimes\Bbb{Q}\cong\pi_{2d-2}\left(\widetilde{\rm{Rat}}_d\big/{\rm{SL}}_2(\Bbb{C})\right)\otimes\Bbb{Q}$$
is not nilpotent. Repeating the argument, the failure of nilpotency for ${\rm{Rat}}^*_d$ follows from the same property for ${\rm{Rat}}_d$ by considering the $\Bbb{Q}$-linear isomorphism 
$$\pi_{2d-2}({\rm{Rat}}^*_d)\otimes\Bbb{Q}\stackrel{\sim}{\rightarrow}\pi_{2d-2}({\rm{Rat}}_d)\otimes\Bbb{Q}$$ provided by \eqref{fibration1} which is equivariant with respect to $\pi_1({\rm{Rat}}^*_d)\twoheadrightarrow\pi_1({\rm{Rat}}_d)$.
\end{proof}

\bibliography{bib}

\begin{thebibliography}{CCMM91}

\bibitem[ADH16]{MR3493218}
D.~Arapura, A.~Dimca, and R.~Hain.
\newblock On the fundamental groups of normal varieties.
\newblock {\em Commun. Contemp. Math.}, 18(4):1550065, 17, 2016.

\bibitem[Art68]{MR0232775}
M.~Artin.
\newblock The etale topology of schemes.
\newblock In {\em Proc. {I}nternat. {C}ongr. {M}ath. ({M}oscow, 1966)}, pages
  44--56. Izdat. ``Mir'', Moscow, 1968.

\bibitem[Ban20]{MR4031111}
Oishee Banerjee.
\newblock Cohomology of the space of polynomial maps on {$\Bbb A^1$} with
  prescribed ramification.
\newblock {\em Adv. Math.}, 359:106881, 37, 2020.

\bibitem[Bha15]{MR3316459}
A.~Bhatnagar.
\newblock The moduli space of totally marked degree two rational maps.
\newblock {\em Acta Arith.}, 167(3):251--260, 2015.

\bibitem[BHP15]{MR3348559}
I.~Biswas, A.~Hogadi, and A.~J. Parameswaran.
\newblock Fundamental group of a geometric invariant theoretic quotient.
\newblock {\em Transform. Groups}, 20(2):367--379, 2015.

\bibitem[CCMM91]{MR1097023}
F.~R. Cohen, R.~L. Cohen, B.~M. Mann, and R.~J. Milgram.
\newblock The topology of rational functions and divisors of surfaces.
\newblock {\em Acta Math.}, 166(3-4):163--221, 1991.

\bibitem[CGM12]{MR2852978}
J.~S. Calcut, R.~E. Gompf, and J.~D. McCarthy.
\newblock On fundamental groups of quotient spaces.
\newblock {\em Topology Appl.}, 159(1):322--330, 2012.

\bibitem[CLM76]{MR0436146}
F.~R. Cohen, T.~J. Lada, and J.~P. May.
\newblock {\em The homology of iterated loop spaces}.
\newblock Lecture Notes in Mathematics, Vol. 533. Springer-Verlag, Berlin-New
  York, 1976.

\bibitem[FW16]{MR3548124}
B.~Farb and J.~Wolfson.
\newblock Topology and arithmetic of resultants, {I}.
\newblock {\em New York J. Math.}, 22:801--821, 2016.

\bibitem[FW17]{MR3652084}
B.~Farb and J.~Wolfson.
\newblock Topology and arithmetic of resultants, {II}: {T}he resultant 1
  hypersurface.
\newblock {\em Algebr. Geom.}, 4(3):337--352, 2017.
\newblock With an appendix by Christophe Cazanave.

\bibitem[Gad17]{MR3720801}
N.~Gadish.
\newblock Representation stability for families of linear subspace
  arrangements.
\newblock {\em Advances in Mathematics}, 322:341--377, 2017.

\bibitem[GG20]{MR4140094}
Claudio G\'{o}mez-Gonz\'{a}les.
\newblock Spaces of non-degenerate maps between complex projective spaces.
\newblock {\em Res. Math. Sci.}, 7(3):Paper No. 26, 17, 2020.

\bibitem[GKMY95]{MR1365252}
M.~A. Guest, A.~Kozlowski, M.~Murayama, and K.~Yamaguchi.
\newblock The homotopy type of the space of rational functions.
\newblock {\em J. Math. Kyoto Univ.}, 35(4):631--638, 1995.

\bibitem[Gol91]{MR1093002}
Lisa~R. Goldberg.
\newblock Catalan numbers and branched coverings by the {R}iemann sphere.
\newblock {\em Adv. Math.}, 85(2):129--144, 1991.

\bibitem[GW]{MRGuntherWestUnpublished}
J.~Gunther and L.~{West}.
\newblock \href{https://bit.ly/2XGJsCY}{Counting $\mathcal{M}_d(\Bbb{F}_q)$}.
\newblock {\em Unpublished}.

\bibitem[Hat02]{MR1867354}
A.~Hatcher.
\newblock {\em Algebraic topology}.
\newblock Cambridge University Press, Cambridge, 2002.

\bibitem[Hav95]{MR1318152}
J.~W. Havlicek.
\newblock The cohomology of holomorphic self-maps of the {R}iemann sphere.
\newblock {\em Math. Z.}, 218(2):179--190, 1995.

\bibitem[KK04]{MR2055050}
S.~Klaus and M.~Kreck.
\newblock A quick proof of the rational {H}urewicz theorem and a computation of
  the rational homotopy groups of spheres.
\newblock {\em Math. Proc. Cambridge Philos. Soc.}, 136(3):617--623, 2004.

\bibitem[Lev11]{MR2741188}
A.~Levy.
\newblock The space of morphisms on projective space.
\newblock {\em Acta Arith.}, 146(1):13--31, 2011.

\bibitem[Lü89]{MR1027600}
W.~Lück.
\newblock {\em Transformation groups and algebraic {$K$}-theory}, volume 1408
  of {\em Lecture Notes in Mathematics}.
\newblock Springer-Verlag, Berlin, 1989.
\newblock Mathematica Gottingensis.

\bibitem[Mil93]{MR1246482}
J.~W. Milnor.
\newblock Geometry and dynamics of quadratic rational maps.
\newblock {\em Experiment. Math.}, 2(1):37--83, 1993.
\newblock With an appendix by the author and Lei Tan.

\bibitem[Mil97]{MR1445557}
R.~J. Milgram.
\newblock The structure of spaces of {T}oeplitz matrices.
\newblock {\em Topology}, 36(5):1155--1192, 1997.

\bibitem[Mil06]{MR2193309}
J.~W. Milnor.
\newblock {\em Dynamics in one complex variable}, volume 160 of {\em Annals of
  Mathematics Studies}.
\newblock Princeton University Press, Princeton, NJ, third edition, 2006.

\bibitem[M{\o}l84]{MR744651}
J.~M. M{\o}ller.
\newblock On spaces of maps between complex projective spaces.
\newblock {\em Proc. Amer. Math. Soc.}, 91(3):471--476, 1984.

\bibitem[MSW17]{MR3709645}
N.~Miasnikov, B.~Stout, and P.~Williams.
\newblock Automorphism loci for the moduli space of rational maps.
\newblock {\em Acta Arith.}, 180(3):267--296, 2017.

\bibitem[Nei10]{MR2739026}
J.~A. Neisendorfer.
\newblock Homotopy groups with coefficients.
\newblock {\em J. Fixed Point Theory Appl.}, 8(2):247--338, 2010.

\bibitem[Pal61]{MR0126506}
R.~S. Palais.
\newblock On the existence of slices for actions of non-compact {L}ie groups.
\newblock {\em Ann. of Math. (2)}, 73:295--323, 1961.

\bibitem[PS03]{MR2078574}
C.~A.~M. Peters and J.~H.~M. Steenbrink.
\newblock Degeneration of the {L}eray spectral sequence for certain geometric
  quotients.
\newblock {\em Mosc. Math. J.}, 3(3):1085--1095, 1201, 2003.
\newblock \{Dedicated to Vladimir Igorevich Arnold on the occasion of his 65th
  birthday\}.

\bibitem[Seg79]{MR533892}
G.~Segal.
\newblock The topology of spaces of rational functions.
\newblock {\em Acta Math.}, 143(1-2):39--72, 1979.

\bibitem[Sil98]{MR1635900}
J.~H. Silverman.
\newblock The space of rational maps on {$\mathbf{P}^1$}.
\newblock {\em Duke Math. J.}, 94(1):41--77, 1998.

\bibitem[Sil07]{MR2316407}
J.~H. Silverman.
\newblock {\em The arithmetic of dynamical systems}, volume 241 of {\em
  Graduate Texts in Mathematics}.
\newblock Springer, New York, 2007.

\bibitem[Swa60]{MR124050}
R.~G. Swan.
\newblock The nontriviality of the restriction map in the cohomology of groups.
\newblock {\em Proc. Amer. Math. Soc.}, 11:885--887, 1960.

\bibitem[Tot90]{MRTotaro}
B.~Totaro.
\newblock
  \href{https://www.math.ucla.edu/~totaro/papers/public_html/cohorat.pdf}{The
  cohomology ring of the space of rational functions}.
\newblock {\em Preprint, MSRI}, 1990.

\end{thebibliography}
\bibliographystyle{alpha}

\end{document}